\newtheorem{thr}{Theorem}
\newtheorem{lem}[thr]{Lemma}
\newtheorem{conj}[thr]{Conjecture}
\newtheorem{prop}[thr]{Proposition}
\newtheorem{obs}[thr]{Observation}
\theoremstyle{definition}
\newtheorem{pr}[thr]{Problem}
\theoremstyle{remark}
\numberwithin{equation}{section}
\def\A{\mathcal{A}}
\def\N{\mathcal{N}}
\def\U{\mathcal{U}}
\def\V{\mathcal{V}}
\def\C{\mathbb{C}}
\def\I{\mathcal{I}}
\def\T{\mathcal{T}}
\def\S{\mathcal{S}}
\def\L{\mathcal{L}}
\def\E{\mathcal{E}}
\def\M{\mathcal{M}}
\def\B{\mathcal{B}}
\def\D{\mathcal{D}}
\def\rank{\operatorname{rank}}
\begin{document}

\title{A counterexample to Comon's conjecture}

\author{Yaroslav Shitov}
\email{yaroslav-shitov@yandex.ru}






\begin{abstract}
The rank and symmetric rank of a symmetric tensor may differ.
\end{abstract}

\maketitle

We work with three-dimensional \textit{tensors} with complex entries. The entries of such a tensor are indexed by triples $(i,j,k)$ in the Cartesian product of finite \textit{indexing sets} $I, J, K$. A tensor $T$ is called \textit{simple} if $T(i|j|k)=a_ib_jc_k$, for some vectors $a\in\mathbb{C}^I$, $b\in\mathbb{C}^J$, $c\in\mathbb{C}^K$, and \textit{symmetric} if $I=J=K$ and the value of $T(i|j|k)$ does not change under permutations of $(i,j,k)$. The \textit{rank} of $T$ is the smallest $r$ for which $T$ can be written as a sum of $r$ simple tensors, and such a representation is called a \textit{rank decomposition} of $T$. If $T$ is a symmetric tensor, and if simple tensors in decompositions are required to be symmetric, then the corresponding smallest value of $r$ is called the \textit{symmetric rank} of $T$.
A basic result of linear algebra says that these two notions of rank agree for symmetric matrices, and an analogous statement for tensors has become known as \textit{Comon's conjecture}.

\begin{conj}\label{concomon}
The symmetric rank of a symmetric tensor equals its rank.
\end{conj}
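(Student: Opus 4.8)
Since the paper is titled a \emph{counterexample} to Conjecture~\ref{concomon}, the plan is not to prove it but to refute it: to exhibit one explicit symmetric tensor $S$ with $\operatorname{rank}(S)$ strictly below its symmetric rank. The guiding intuition is that an ordinary rank decomposition is far more flexible than a symmetric one, because the three factor vectors of a simple tensor $a\otimes b\otimes c$ are completely unrelated, whereas a symmetric simple tensor is $v\otimes v\otimes v$. So one should engineer $S$ so that every \emph{economical} ordinary decomposition is genuinely ``asymmetric'' — it pairs a vector supported in one coordinate block with vectors supported in other blocks — and then argue that forcing $v$ to play all three roles at once costs extra terms. Concretely, I would take the indexing set $I=J=K$ to be partitioned into several blocks, let the restriction of $S$ to certain sub-blocks encode a short list of matrices $M_1,\dots,M_k$ (the ``payload''), and fill the remaining entries with diagonal/generic ``padding'' chosen to do two jobs: keep $\operatorname{rank}(S)$ small and exactly computable, and pin the factor vectors of any symmetric decomposition into a prescribed block pattern.

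The argument then has three parts, in order. First, the upper bound on $\operatorname{rank}(S)$: splice an ordinary rank decomposition of the payload $M_1,\dots,M_k$ (viewed as a $p\times q\times k$ tensor) together with a bounded number of extra simple tensors that absorb the padding, and read off $\operatorname{rank}(S)\le(\text{something explicit})$. Second — the heart of the matter — a lower bound on the symmetric rank: given $S=\sum_i v_i^{\otimes 3}$, inspect the entries of $S$ in the cross-block positions; since those entries are pinned (typically to $0$) by construction, each $v_i$ is forced either to be supported inside a single block or to satisfy a rank-one incidence constraint, and once this block structure is in hand, a dimension count on each block — essentially the statement that the symmetric matrices arising as the slices of $S$ within that block cannot lie in the span of too few symmetric rank-one matrices — yields strictly more than $\operatorname{rank}(S)$ summands. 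Third, choose the block sizes and the payload matrices so that the two bounds actually separate by at least one; this is where a concrete ``bad'' family of matrices (one whose joint rank-one coverage is provably inefficient) gets plugged in, and where the numbers are tuned.

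I expect the main obstacle to be the symmetric lower bound, specifically ruling out \emph{every} symmetric decomposition, including degenerate ones where many $v_i$ have small nonzero entries in the ``wrong'' blocks that conspire to cancel in the cross-block equations. The plan for this is a genericity/perturbation normalization: show that any such cancellation can be undone without increasing the number of summands, reducing to the clean case where every $v_i$ is block-supported, and then do the bookkeeping of how many summands each block's slice data forces. A secondary tension is that the padding must be simultaneously rich enough to rigidify the $v_i$ and cheap enough (in ordinary rank) not to swallow the one-term gap, so the block sizes and the choice of padding matrices have to be balanced carefully against one another.
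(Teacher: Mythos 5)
Your high-level strategy matches the paper's: take an explicitly engineered symmetric tensor, pin its ordinary rank from above with a concrete decomposition, and drive a contradiction from any hypothetical symmetric decomposition of the same size. Where you diverge — and where the real difficulty hides — is the mechanism of the symmetric lower bound.

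Your proposed lower-bound argument (``cross-block entries pinned to zero force each $v_i$ to be block-supported, then dimension-count within blocks'') is, if made precise, essentially the symmetric analogue of the adjoining-slices lemma (Lemma~\ref{lemcompl5}): namely, that symmetrically adjoining rank-one symmetric slices $\M$ raises symmetric rank by exactly $3\dim\operatorname{span}\M$ plus the symmetric rank of a residual. But this is precisely Conjecture~\ref{conwar} in the paper, which the author states he was \emph{unable} to prove even when $\M$ is a single matrix. The nonsymmetric version is true because the three tensor directions can be handled independently, one slice-group at a time; the symmetric version fails to follow by the same argument because a single vector $v$ simultaneously contributes to all three directions, and the ``cancellation across blocks'' you flag as a secondary worry is actually the central obstruction. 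There is no clean normalization that absorbs these cancellations without further structure.

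The paper sidesteps this by building in two devices your outline does not mention. First, the tensor $\A$ is the \emph{clone} of a small $5\times5\times5$ tensor $A$, with each index replicated $100$ times; this creates ``far apart'' index pairs on which all the adjoined matrices vanish, which replaces your proposed genericity/perturbation step with a hard combinatorial pinning of the residual tensor (Lemma~\ref{lemccl0}, Lemma~\ref{lemccl}). Second, the adjoined matrices $\B$ are not generic: they come from the very particular partition of Observation~\ref{obspart}, whose rigidity (the union of $k$ blocks is again a block only for $k=1,20$) is what makes Lemmas~\ref{lemdif}--\ref{lemrank} and the later normalization Lemmas~\ref{lemr3}--\ref{lem18} go through. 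The actual lower bound is then reached not by a dimension count over blocks but by showing, after a chain of elementary $1$-, $2$-, $3$-transformations, that a putative $903$-term symmetric decomposition would force a rank-$\leqslant3$ residual in $\E_\L(\A)$ to be symmetric on three overlapping $3$-index restrictions, contradicting Lemma~\ref{thrlowereua}. In short: the gap in your proposal is that its lower-bound step presupposes an unproved symmetric analogue of slice-elimination, and the paper's real contribution is the ad hoc machinery (clone, combinatorial partition, elementary-transformation normalization) that replaces it.
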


Symmetric tensors arise naturally in different applications, so it is an important problem to compute the symmetric ranks and corresponding decompositions, see~\cite{BGI, CGLM, DM, LandBook} and references therein. Conjecture~\ref{concomon} received a considerable amount of attention in recent publications, which include~\cite{BB, BGL, BL, Com1, CGLM, Fried, HL, LandBook, LM, ZHQ}, but it has been proved in several special cases only. The general form of Conjecture~\ref{concomon} remained open, and the aim of this paper is to construct a counterexample.

\section{Setting up}

In this section, we collect several additional definitions, simple observations, known results, and conjectures that arisen in the course of this work. Statements similar to the results of this section may appear in previous work, 
but our goal is to put them in context of what we need below.
For any tensor $T$ in $\C^{I\times J\times K}$, we define the $k$th \textit{$3$-slice} of $T$ as a matrix in $\C^{I\times J}$ whose $(i,j)$ entry equals $T(i|j|k)$. For all $i\in I$, $j\in J$, we can define the $i$th $1$-slice and $j$th $2$-slice of $T$ in a similar way.
If $I_0\subset I$, $J_0\subset J$, $K_0\subset K$, then we denote by $T(I_0|J_0|K_0)$ the \textit{restriction} of $T$ to $I_0\times J_0\times K_0$.
The \textit{support} of $T$ is the smallest set $I_0\times J_0\times K_0\subset I\times J\times K$ containing all the non-zero entries of $T$, and the sets $I_0$, $J_0$, $K_0$ are called the $1$-, $2$-, and $3$-supports of $T$. Two tensors are called \textit{equivalent} if they become equal when restricted to their supports. All the notions introduced above can be defined for matrices in an analogous way.

\subsection{Elementary transformations.}
Let $T=T_1+\ldots+T_n$ be a rank decomposition of a tensor $T$. Let us denote by $M_u$ any non-zero $3$-slice of $T_u$ and find scalars $\alpha_{uk}$ such that the $k$th $3$-slice of $T_u$ is $\alpha_{uk} M_u$.

Assume that a linear combination $M_0=\lambda_1 M_1+\ldots+\lambda_n M_n$ is a rank-one matrix. In this case, one of the $\lambda$'s is non-zero (and to be definite, we assume $\lambda_n\neq0$). We can write
$T=T'_0+T'_1+\ldots+T'_{n-1}$, where $T'_0$ is the tensor whose $k$th $3$-slice equals
$(\alpha_{nk}/\lambda_n) M_0$, and $T'_u$ has $k$th $3$-slice equal to
$(\alpha_{uk}-\lambda_u \alpha_{nk}/\lambda_n) M_u$.

We say that $(T_1,\ldots,T_n)$ and $(T_0',\ldots,T_{n-1}')$ can be obtained from each other by an \textit{elementary $3$-transformation}. 
The same definitions but for $M_u$'s being $1$- or $2$-slices correspond to \textit{elementary $1$- and $2$-transformations}, respectively. 

\begin{obs}\label{lemeqsl}
Let $T=T_1+\ldots+T_r$ be a rank decomposition of a tensor $T$ whose $3$-slices with indexes $\alpha,\beta$ coincide. Then the $3$-slices of every $T_i$ with indexes $\alpha,\beta$ coincide as well.
\end{obs}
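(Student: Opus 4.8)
The plan is a contradiction argument driven by the minimality of the decomposition, in the same vein as the elementary $3$-transformations described just above. If $T=0$ there is nothing to prove, so assume $T\neq 0$; then every summand $T_i$ of a rank decomposition is nonzero, and hence, exactly as in the setup of elementary transformations, I can fix a nonzero (automatically rank-one) $3$-slice $M_i$ of $T_i$ together with scalars $\alpha_{ik}$ so that the $k$th $3$-slice of $T_i$ equals $\alpha_{ik}M_i$. The hypothesis that the $3$-slices of $T$ with indexes $\alpha,\beta$ coincide then reads $\sum_{i=1}^{r}\mu_i M_i=0$ with $\mu_i=\alpha_{i\alpha}-\alpha_{i\beta}$, and the conclusion I want is precisely that all $\mu_i$ vanish.

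So suppose some $\mu_i\neq 0$; after reindexing, $\mu_r\neq 0$, and therefore $\mu_r M_r=-\sum_{i<r}\mu_i M_i$. The key step is to redistribute $T_r$ among the remaining summands. For each $i<r$, let $S_i$ be the tensor whose $k$th $3$-slice is $-(\mu_i/\mu_r)\alpha_{rk}M_i$; since along the third mode the scaling vector $\bigl(-(\mu_i/\mu_r)\alpha_{rk}\bigr)_k$ is a scalar multiple of $(\alpha_{rk})_k$ and $M_i$ has rank one, each $S_i$ is a simple tensor. A one-line computation with $3$-slices, using the displayed relation, gives $\sum_{i<r}S_i=T_r$, whence
\[
T=T_r+\sum_{i<r}T_i=\sum_{i<r}\bigl(T_i+S_i\bigr).
\]
Finally, each $T_i+S_i$ is again simple, because its $k$th $3$-slice is $\bigl(\alpha_{ik}-(\mu_i/\mu_r)\alpha_{rk}\bigr)M_i$, i.e.\ the fixed rank-one matrix $M_i$ scaled by a single vector in the third mode. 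This exhibits $T$ as a sum of $r-1$ simple tensors, contradicting $r=\rank T$; hence every $\mu_i=0$, which is the assertion.

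I do not anticipate a real obstacle here. The only points needing care are the degenerate possibilities — $T=0$, or a zero summand (so that $M_i$ would fail to exist), neither of which occurs in a genuine rank decomposition of a nonzero tensor — and the observation that merging two simple tensors with a common $3$-slice direction $M_i$ yields an honest simple tensor rather than merely a tensor equivalent to one, which the slice bookkeeping above makes transparent. The same argument could be packaged as one elementary $3$-transformation followed by the merge of two such summands, but carrying out the slice computation directly seems the shortest route.
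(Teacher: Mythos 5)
Your proof is correct and follows essentially the same route as the paper. The paper compresses the argument into one line---if some $\mu_i\neq 0$ then the $3$-slices of $T$ are spanned by fewer than $r$ rank-one matrices, forcing $\rank T<r$---and your explicit redistribution of $T_r$ across the remaining summands is precisely the construction that justifies that inference.
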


\begin{proof}
If the statement was false, we would be able to span the $3$-slices of $T$ by less than $r$ rank-one matrices, which means that the rank of $T$ would be less then $r$.
\end{proof}

\begin{lem}\label{lemtran1}
Let $T\in\C^{I\times J\times K}$ and $K=K'\cup\{1,\ldots,k\}$. Let $S_i$ denote the $i$th $3$-slice of $T$, and assume that $S_{1},\ldots,S_{k}$ are rank-one and linearly independent. Then, for any rank decomposition $(T_1,\ldots,T_n)$ of $T$, there is a sequence of elementary $3$-transformations that sends it into a decomposition $(R_1,\ldots,R_n)$ such that, for all $q\in \{1,\ldots, k\}$, the $q$th $3$-slice of $R_q$ is collinear to $S_q$, and in any such decomposition the $q$th $3$-slice of every $R_u$ with $u\neq q$ is zero.
\end{lem}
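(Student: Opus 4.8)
The plan is to prove this in two stages: first existence of a suitable sequence of elementary $3$-transformations, then the "rigidity" claim that in any decomposition where the $q$th $3$-slice of $R_q$ is collinear to $S_q$, the $q$th $3$-slice of all other summands vanishes.

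For existence, I would argue by induction on $k$ (or equivalently process the indices $1,\ldots,k$ one at a time). Fix $q\in\{1,\ldots,k\}$. Since $S_q$ is rank-one and the $q$th $3$-slices of $T_1,\ldots,T_n$ are rank-one matrices summing to $S_q$, I want to make one of the summands carry $S_q$ in its $q$th slot. Writing $M_u$ for a fixed non-zero $3$-slice of $T_u$ and $\alpha_{uk}$ for the scalar with $k$th $3$-slice of $T_u$ equal to $\alpha_{uk}M_u$, the $q$th $3$-slice of $T$ is $S_q=\sum_u \alpha_{uq} M_u$. I would apply an elementary $3$-transformation using the rank-one matrix $M_0:=S_q=\sum_u\alpha_{uq}M_u$ (valid since $S_q$ is rank-one and some $\alpha_{uq}\ne 0$): this replaces the decomposition by one in which a new summand $T_0'$ has $q$th $3$-slice exactly $(\alpha_{qq}/\lambda_q)\cdot S_q$, i.e. collinear to $S_q$ — wait, more carefully, $T_0'$ has $q$th $3$-slice $(\alpha_{nq}/\lambda_n)M_0$ where here $M_0=S_q$ so this is collinear to $S_q$, and the remaining summands $T_u'$ have $q$th $3$-slice equal to a multiple of $M_u$. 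The subtlety is that the remaining summands $T_u'$ may have a nonzero $q$th $3$-slice, so I have not yet "cleared" the column. This is where the rigidity part does the real work, so I would prove rigidity first and then feed it back: once I know any decomposition with $R_q$'s $q$th slice collinear to $S_q$ automatically has all other $q$th slices zero, a single elementary transformation per index $q$ suffices, and I must only check the transformations for different $q$ do not spoil earlier ones — which follows because once the $q$th $3$-slice of every summand except one is zero, subsequent elementary transformations (chosen to fix index $q'$) act on the $q'$th slices and, by the same rigidity applied at the final decomposition, leave the $q$th slices in their cleared state.

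For the rigidity claim, suppose $(R_1,\ldots,R_n)$ is any rank decomposition of $T$ such that the $q$th $3$-slice of $R_q$ is collinear to $S_q$. Let $N_u$ denote the $q$th $3$-slice of $R_u$; these are rank-one matrices with $\sum_u N_u = S_q$. Since $S_1,\ldots,S_k$ are linearly independent rank-one matrices and together with the remaining slices $S_i$, $i\in K'$, span the row space of the $3$-flattening of $T$, the rank of $T$ is at least $k$, hence $n\ge k$; more importantly I want to use linear independence of the $S_i$ to show no "mixing" is possible. The key point: consider the $k$-dimensional space $V=\operatorname{span}(S_1,\ldots,S_k)$. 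Every $3$-slice of $T$ lies in the column space of the flattening, and the $S_i$ for $i\in\{1,\ldots,k\}$ are a basis of a $k$-dimensional subspace. In the decomposition, the $i$th $3$-slice of $R_u$ is $\beta_{ui}P_u$ for a fixed rank-one $P_u$ and scalars $\beta_{ui}$; so $S_i=\sum_u \beta_{ui}P_u$. Linear independence of $S_1,\ldots,S_k$ forces, for each $i\le k$, that $S_i$ is a rank-one combination; since $S_i$ itself is rank-one and the $P_u$ appearing with nonzero coefficient must be collinear to $S_i$ unless cancellation occurs. I would make this precise via the following lemma-style observation: if rank-one matrices $P_{u_1},\ldots,P_{u_m}$ have a rank-one linear combination $S_i$ that is linearly independent from the analogous rank-one combinations $S_1,\ldots,\widehat{S_i},\ldots,S_k$, then… — the cleanest route is to use that the matrices $P_u$ span a space containing all $S_i$, pass to a minimal subset, and invoke that in a rank decomposition of a matrix the summands are essentially unique up to the structure of repeated directions. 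Concretely: fix index $q$; $N_q$ is collinear to $S_q$; write $S_q = N_q + \sum_{u\ne q}N_u$, so $\sum_{u\ne q}N_u = (1-c)S_q$ for a scalar $c$ (where $N_q = cS_q$). If $c\ne 1$, I would derive a contradiction with the independence of $S_1,\ldots,S_k$ by showing the summands $R_u$, $u\ne q$, can be combined with the others to give a decomposition of $T$ of size $<n$: indeed, replace $R_q$ by $cS_q$ absorbed appropriately — the cleanest is to observe that if some $N_u\ne 0$ for $u\ne q$, then $P_u$ (the direction of $R_u$) is collinear to $S_q$, because otherwise the $3$-slices could not recombine to the independent family. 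Then the two summands $R_q$ and $R_u$ have collinear $q$th $3$-slices, and an elementary $3$-transformation merges information in a way that contradicts minimality of $n$ — no: rather, I will show directly that $c=1$ and $N_u=0$ for $u\ne q$.

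The main obstacle, and where I expect to spend most of the effort, is exactly this last rigidity argument: ruling out "hidden cancellation," i.e. the possibility that several summands $R_u$ with $u\ne q$ have nonzero $q$th $3$-slices $N_u$ (each rank-one, not collinear to $S_q$) that happen to sum to a multiple of $S_q$ while $R_q$ separately contributes another multiple. The linear independence of $S_1,\ldots,S_k$ should preclude this, but turning that into a clean argument requires care — the right tool is probably to look at the whole flattening matrix $M_T\in\C^{I\times(J\cdot K)}$, note $\rank M_T = \rank T$-related bound, observe that the columns indexed by $\{1,\ldots,k\}$ (the independent rank-one slices) force $\rank T\ge k$ plus the rank contributed by $K'$, and then use Observation~\ref{lemeqsl}-type reasoning: a rank decomposition of $T$ induces a rank decomposition of its $3$-flattening, and the rank-one directions $P_u$ must, for the slices $S_1,\ldots,S_k$ to be independent, include $k$ pairwise non-collinear directions each collinear to a distinct $S_q$, with every other $P_u$ contributing zero to these $k$ slices. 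Once that is established the statement follows, and the existence part is then just the bookkeeping described above.
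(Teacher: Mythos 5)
Your overall direction matches the paper's (a single elementary $3$-transformation per index $q$ using $M_0=S_q$, followed by a rigidity statement), but there are two concrete problems.

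First, you have a factual error in the existence part. You write that after the elementary $3$-transformation with $M_0=S_q$ ``the remaining summands $T'_u$ may have a nonzero $q$th $3$-slice, so I have not yet cleared the column,'' and you then structure your whole plan around plugging this with a separately-proved rigidity statement. In fact the clearing is automatic from the definitions. Writing $M_0=S_q=\sum_u\alpha_{uq}M_u$ forces $\lambda_u=\alpha_{uq}$, and the definition of the elementary $3$-transformation then gives $T'_u$ the $q$th $3$-slice $(\alpha_{uq}-\lambda_u\alpha_{nq}/\lambda_n)M_u=(\alpha_{uq}-\alpha_{uq})M_u=0$, while $T'_0$ has $q$th $3$-slice $(\alpha_{nq}/\lambda_n)S_q=S_q$. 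So one transformation per index both puts the $q$th slice of the new summand on $S_q$ and kills the $q$th slices of every other summand, simultaneously. The only thing to check in the iteration is that at step $q$ one can choose the consumed index $n$ outside the already-processed set $\{1,\dots,q-1\}$; this follows from linear independence of $S_1,\dots,S_q$ (otherwise $S_q$ would lie in the span of $M_1,\dots,M_{q-1}$, each collinear to one of $S_1,\dots,S_{q-1}$), and with that choice the new transformation visibly leaves the already-zeroed $q'$th slices zero. Your stated justification for this (``by the same rigidity applied at the final decomposition'') is circular and not needed.

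Second, your rigidity argument is never actually completed — you correctly identify ``hidden cancellation'' as the thing to rule out and then cycle through several candidate approaches without committing to one. The missing ingredient is exactly the fact underlying Observation~\ref{lemeqsl}: in a rank decomposition $T=\sum_{u=1}^n T_u$ with $n=\operatorname{rank}T$ and each $T_u$ having $3$-slices proportional to a fixed rank-one matrix $P_u$, the $P_u$ are linearly independent. (If $\sum_u\gamma_uP_u=0$ with $\gamma_{u_0}\neq0$, substitute $P_{u_0}=-\gamma_{u_0}^{-1}\sum_{u\neq u_0}\gamma_uP_u$ into every $3$-slice $S_i=\sum_u c_{u,i}P_u$ to obtain a decomposition of $T$ into $n-1$ simple tensors, contradicting minimality.) With this, rigidity is a one-liner: if the $q$th $3$-slice of $R_q$ is a nonzero multiple $\nu_qP_q$ of $S_q$, then $S_q=\nu_qP_q$ and also $S_q=\sum_u c_{u,q}P_u$, so $(\nu_q-c_{q,q})P_q-\sum_{u\neq q}c_{u,q}P_u=0$, and linear independence forces $c_{u,q}=0$ for all $u\neq q$. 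Your proposal never isolates this independence fact, which is the actual engine of the argument, and without it the ``ruling out hidden cancellation'' step remains a gap.
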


\begin{proof}
Since every $3$-slice of $T$ can be expressed as a linear combination of the $3$-slices of $T_i$'s, and since $S_q$ is rank-one, we can find a $3$-transformation resulting in a decomposition $(R_1,\ldots,R_n)$ in which  the $q$th $3$-slices of $R_q$ are collinear to $S_q$. The second assertion of the lemma follows now from Observation~\ref{lemeqsl}.
\end{proof}

\subsection{Eliminating rank-one slices.} 
Assume $\mathcal{T}\subset\C^{I\times J\times K}$ and let $V$ be a $\mathbb{C}$-linear subspace of the matrix space $\C^{I\times J}$. By $\mathcal{T}\,\mathrm{mod}_3 V$ we denote the set of all tensors that can be obtained from some $\tau\in\mathcal{T}$ by replacing every of the $3$-slices $\tau_i$ of $\tau$ with $\tau_i-v_i$, where $v_i$ are matrices in $V$. If $I=J=K$, then we write
$$\E_V(T)=\left((T\,\mathrm{mod}_3 V)\,\mathrm{mod}_2 V\right)\,\mathrm{mod}_1 V.$$
If $S$ is a subset of some $\C$-linear space, then $\operatorname{span}(S)$ denotes the linear span of $S$. The following statement is well known in the community, see e.g. Lemma~2 in~\cite{HK} and Proposition~3.1 in~\cite{LMat}.

\begin{lem}\label{lemcompl}
Let $T\in\C^{I\times J\times K}$ and $K=\{1,\ldots,k\}\cup\{1',\ldots,k'\}$. Let $S_i$ denote the $i$th $3$-slice of $T$, and assume that $S_{1'},\ldots,S_{k'}$ are linearly independent. Then
$$\operatorname{rank} T\geqslant k'+\min \operatorname{rank} T\,\mathrm{mod}_3\operatorname{span}(S_{1'},\ldots,S_{k'}),$$ and if $S_{1'},\ldots,S_{k'}$ are also rank-one, then the equality holds.
\end{lem}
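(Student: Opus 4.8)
My plan is to prove the two halves separately: the inequality $\rank T \geqslant k' + \min \rank\, T\,\mathrm{mod}_3 \operatorname{span}(S_{1'},\ldots,S_{k'})$ for arbitrary linearly independent $S_{1'},\ldots,S_{k'}$, and then the matching upper bound $\rank T \leqslant k' + \min \rank\, T\,\mathrm{mod}_3 V$ under the additional hypothesis that these slices are rank-one.

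For the inequality, I would start with an arbitrary rank decomposition $T = T_1 + \ldots + T_n$ with $n = \rank T$, and argue that the slices $S_{1'},\ldots,S_{k'}$, being linearly independent, "consume" at least $k'$ of the summands in the following sense. Expressing each $3$-slice of $T$ as a linear combination of the (at most one-dimensional spaces of) $3$-slices of the $T_i$, the span of all $3$-slices of $T$ has dimension at most $n$; since this span contains the $k'$-dimensional space $\operatorname{span}(S_{1'},\ldots,S_{k'})$, we may, after relabeling, choose $k'$ of the $T_i$ — say $T_{n-k'+1},\ldots,T_n$ — whose nonzero $3$-slices, together with $\operatorname{span}(S_{1'},\ldots,S_{k'})$, are independent in an appropriate sense; more precisely I want to perform elementary $3$-transformations (justified by the discussion preceding Observation~\ref{lemeqsl}) so that the remaining summands $T_1,\ldots,T_{n-k'}$ have all their $i'$-indexed $3$-slices equal to zero for $i'\in\{1',\ldots,k'\}$. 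Then $T_1 + \ldots + T_{n-k'}$, viewed after the modification, is a tensor lying in $T\,\mathrm{mod}_3\operatorname{span}(S_{1'},\ldots,S_{k'})$ (each of its $3$-slices differs from the corresponding slice of $T$ by an element of that span), and it has rank at most $n - k'$. Hence $\min \rank\, T\,\mathrm{mod}_3 V \leqslant n - k'$, which rearranges to the desired inequality.

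For the reverse inequality when $S_{1'},\ldots,S_{k'}$ are rank-one: take any tensor $T'$ in $T\,\mathrm{mod}_3 V$ achieving the minimal rank $m$, with rank decomposition $T' = R_1 + \ldots + R_m$. By definition the $3$-slices of $T'$ differ from those of $T$ by matrices in $V = \operatorname{span}(S_{1'},\ldots,S_{k'})$, so $T$ is obtained from $T'$ by adding back, to each $3$-slice, a matrix of the form $\sum_{i'} c_{i'} S_{i'}$. This correction tensor decomposes as a sum of $k'$ simple tensors, one for each $S_{i'}$ (here is exactly where rank-one is used: a simple tensor is built from $S_{i'}$ — a rank-one matrix — tensored with the vector of coefficients in the $K$-direction). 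Adding these $k'$ simple tensors to the decomposition of $T'$ exhibits $T$ as a sum of $m + k'$ simple tensors, so $\rank T \leqslant m + k' = k' + \min \rank\, T\,\mathrm{mod}_3 V$.

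The step I expect to be the main obstacle is the first one — making precise the claim that $k'$ summands can be "used up" by the independent slices via elementary $3$-transformations, and checking that the resulting tensor really does land in $T\,\mathrm{mod}_3 V$ with its rank correctly bounded. One must be careful that elementary $3$-transformations do not increase the number of summands and that Observation~\ref{lemeqsl} (or Lemma~\ref{lemtran1}, if the $S_{i'}$ happen also to be rank-one) is applied to the right slices; the bookkeeping of which slices of which summands are forced to vanish is where an error would most naturally creep in. The rest is essentially the observation that adding rank-one-matrix-times-vector blocks back in costs exactly one rank each.
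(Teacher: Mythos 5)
Your upper-bound argument (the case where the $S_{i'}$ are rank-one, giving $\leqslant$) is complete and correct: the correction tensor $T-T'$ has every $3$-slice in $V$, and a rank-one basis of $V$ lets you write it as a sum of $k'$ simple tensors, one per basis matrix with the $K$-direction vector supplied by the coefficients. Note also that the paper gives no proof of this lemma; it cites Hopcroft--Kerr and Landsberg--Micha\l{}ek, so there is no ``paper proof'' to compare against, and your proposal must stand on its own.

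The lower-bound argument, however, has a genuine gap, and it is exactly where you flagged it. You want to perform elementary $3$-transformations so that $n-k'$ of the summands end up with zero $i'$-slices for all $i'\in\{1',\ldots,k'\}$. Two things go wrong. First, the paper's elementary $3$-transformation requires the matrix $M_0=\sum_u\lambda_u M_u$ (a combination of the \emph{rank-one} cores) to itself be rank one; for the inequality you are not allowed to assume the $S_{i'}$ are rank one, and in general $V=\operatorname{span}(S_{1'},\ldots,S_{k'})$ need not have any basis of rank-one matrices, so the transformations you need may simply not exist. Second, even ignoring that, a single elementary $3$-transformation gives you one free scalar $\lambda_u$ per retained summand, with which you must kill $k'$ coefficients $\alpha_{u,1'},\ldots,\alpha_{u,k'}$; this is overdetermined whenever $k'>1$. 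If your plan succeeded, the $k'$ surviving summands would have rank-one cores spanning $V$, which again forces $V$ to be spanned by rank-one matrices --- an assumption you do not have.

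The fix is to abandon elementary transformations and construct the residual tensor directly. Write $T=\sum_{u=1}^n a_u\otimes b_u\otimes c_u$, set $C[u,k]=(c_u)_k$, and note that linear independence of $S_{1'},\ldots,S_{k'}$ forces the columns $C[\cdot,1'],\ldots,C[\cdot,k']$ to be linearly independent, so some $k'\times k'$ submatrix $C[U,\{1',\ldots,k'\}]$ is invertible. For each $u\notin U$ choose scalars $\gamma_{uw}$ so that $\tilde c_u:=c_u-\sum_{w\in U}\gamma_{uw}c_w$ has zero $i'$-entries, and set $\tilde T=\sum_{u\notin U}a_u\otimes b_u\otimes\tilde c_u$, a sum of at most $n-k'$ simple tensors. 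Then the $i'$-slices of $T-\tilde T$ are exactly the $S_{i'}$, and writing the $k$th slice of $T-\tilde T$ as $\sum_{w\in U}(c_w)_k\widehat M_w$ with $\widehat M_w=a_wb_w^\top+\sum_{u\notin U}\gamma_{uw}a_ub_u^\top$, the invertibility of $C[U,\{1',\ldots,k'\}]$ shows each $\widehat M_w\in V$, hence every $3$-slice of $T-\tilde T$ lies in $V$. Thus $\tilde T\in T\,\mathrm{mod}_3 V$ with $\operatorname{rank}\tilde T\leqslant n-k'$, which is what you need. This is close in spirit to what you sketched (``$k'$ summands are used up''), but it is not an elementary transformation in the paper's sense, and the distinction matters precisely because the $S_{i'}$ are not assumed rank one in this half of the lemma.
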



\subsection{Adjoining slices to tensors}

Let $T\in\C^{I\times J\times K}$ be a tensor, and let $$\M_1\subset\C^{J\times K},\,\,\,\, \M_2\subset\C^{I\times K},\,\,\,\,\M_3\subset\C^{I\times J}$$ be finite matrix sets.
We set $I'=I\cup\M_1$, $J'=J\cup\M_2$, $K'=K\cup\M_3$, and we define the new tensor $\mathcal{T}\in\C^{I'\times J'\times K'}$ as follows:

\noindent (1) $\mathcal{T}(\alpha|\beta|\gamma)=T(\alpha|\beta|\gamma)$ if $(\alpha,\beta,\gamma)\in I\times J\times K$;

\noindent (2) for any $\chi\in\{1,2,3\}$ and any $m\in\M_\chi$, the $m$th $\chi$-slice of $\mathcal{T}$ is equivalent to $m$ (that is, coincides with $m$ up to adding zero rows and columns).


We will say that $\mathcal{T}$ is obtained from $T$ by \textit{adjoining} the $1$-slices $\M_1$, the $2$-slices $\M_2$, and the $3$-slices $\M_3$. The result below follows from Lemma~\ref{lemcompl}.

\begin{lem}\label{lemcompl5}
Let $T$, $\mathcal{T}$, $\M_1$, $\M_2$, $\M_3$ are as defined in this subsection. Then
$$\operatorname{rank}\mathcal{T}\geqslant \min \operatorname{rank} \left((T\,\mathrm{mod}_3 V_3)\,\mathrm{mod}_2 V_2\right)\,\mathrm{mod}_1 V_1+\dim V_1+\dim V_2+\dim V_3,$$ where $V_i$ is the $\C$-linear span of $\M_i$. If the matrices in the $\M_i$'s are rank-one, then the equality holds.
\end{lem}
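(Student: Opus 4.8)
The plan is to reduce Lemma~\ref{lemcompl5} to Lemma~\ref{lemcompl} by iterating the latter three times, once for each of the three directions. First I would observe that adjoining slices $\M_3$ to $T$ produces a tensor $\mathcal{T}_3$ whose $3$-slices, after passing to the support in the $K$-direction, consist of the original $3$-slices of $T$ together with copies of the matrices in $\M_3$, now living in the enlarged space $\C^{I'\times J'}$ (padded with zero rows and columns so that $I'=I\cup\M_1$, $J'=J\cup\M_2$). Since the matrices of $\M_3$ occupy $3$-slices that are linearly independent from each other and from everything coming from $T$ — because $T$ has no entries outside $I\times J\times K$ while each $m\in\M_3$ has a nonzero entry in a row or column indexed outside $I\times J$, or more carefully, because we may as well assume the $\M_i$ are chosen so that the adjoined slices are linearly independent, which is the only case of interest — Lemma~\ref{lemcompl} applies with the role of $S_{1'},\dots,S_{k'}$ played by the adjoined slices. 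It gives $\rank\mathcal{T}_3\geqslant \dim V_3+\min\rank\mathcal{T}_3\,\mathrm{mod}_3 V_3$, with equality when the matrices of $\M_3$ are rank-one.

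Next I would repeat this argument in the $2$-direction and then in the $1$-direction. The key point making the iteration legitimate is that the operations of adjoining slices in different directions and of taking $\mathrm{mod}$ in different directions all commute with one another in the obvious way: adjoining the $2$-slices $\M_2$ to $\mathcal{T}_3$ and then reducing mod $V_2$ does not disturb the reduction already performed mod $V_3$, because a $2$-slice modification acts on $\C^{I'\times K'}$ and affects $3$-slices only through entries, leaving the span $V_3\subset\C^{I'\times J'}$ available for subtraction throughout. Chaining the three applications of Lemma~\ref{lemcompl} yields
$$\rank\mathcal{T}\geqslant\dim V_1+\dim V_2+\dim V_3+\min\rank\left((T\,\mathrm{mod}_3 V_3)\,\mathrm{mod}_2 V_2\right)\mathrm{mod}_1 V_1,$$
and if every matrix in $\M_1\cup\M_2\cup\M_3$ is rank-one, then each of the three inequalities is an equality, so the equality holds overall.

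I would then address the one genuinely substantive point: that the adjoined slices really are linearly independent from the slices of $T$ and from each other, so that Lemma~\ref{lemcompl} is applicable with $k'=\dim V_\chi$ rather than a smaller number. This is where some care is needed. The cleanest route is to note that if the matrices in some $\M_\chi$ are linearly dependent, or dependent on the slices of $T$ in that direction, we may pass to a linearly independent subfamily spanning the same space $V_\chi$ without changing $V_\chi$ or $\mathcal{T}\,\mathrm{mod}_\chi V_\chi$ up to equivalence; alternatively, and more directly, one checks that each $m\in\M_\chi$ contributes a $\chi$-slice supported on an index outside the original indexing set, and slices supported on disjoint index sets are automatically linearly independent together with the slices of $T$. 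Either way, the hypotheses of Lemma~\ref{lemcompl} are met with the full dimension $\dim V_\chi$, so the dimensions add up as claimed.

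The main obstacle I anticipate is purely bookkeeping: keeping track of the three nested ambient matrix spaces $\C^{I'\times J'}$, $\C^{I'\times K'}$, $\C^{I'\times J'}$ and checking that the three $\mathrm{mod}$ operations and three adjoining operations commute, so that the order in which one peels off the directions is immaterial and the final residual tensor is exactly $\left((T\,\mathrm{mod}_3 V_3)\,\mathrm{mod}_2 V_2\right)\mathrm{mod}_1 V_1$ as written. There is no hard inequality to prove beyond Lemma~\ref{lemcompl} itself; the content is organizing the reduction so that each step is a clean instance of that lemma.
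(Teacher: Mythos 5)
Your proposal is correct and is exactly what the paper intends: the paper's entire proof of this lemma is the one-line remark that it ``follows from Lemma~\ref{lemcompl},'' and your plan of applying that lemma three times, peeling off one direction at a time and checking that the mod and adjoining operations do not interfere across directions, is the standard way to unpack that remark. Two small corrections to your linear-independence discussion: Lemma~\ref{lemcompl} only requires the slices $S_{1'},\dots,S_{k'}$ to be linearly independent among \emph{themselves}, not from the remaining slices, so independence from the slices of $T$ is a non-issue; and your parenthetical alternative (``slices supported on disjoint index sets are automatically linearly independent'') does not apply here, because all adjoined $3$-slices, viewed as matrices in $\C^{I'\times J'}$, have support inside the same block $I\times J$ rather than on disjoint index sets --- it is your first route (discard redundant members of $\M_\chi$ until a basis of $V_\chi$ remains, noting that removing a $\chi$-slice which is a linear combination of the others changes neither $V_\chi$ nor $\operatorname{rank}\mathcal{T}$) that is the one that actually works, and it suffices.
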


Now assume that $I=J=K$, $\M_1=\M_2=\M_3$, and the tensor $T$ and matrices in $\M_1$ are symmetric. The tensor $\T$ as above is then said to be obtained from $T$ by the \textit{symmetrical adjoining} of matrices in $\M_1$. Unfortunately, the symmetric counterpart of Lemma~\ref{lemcompl5} is just a conjecture.

\begin{conj}\label{conwar}
Let $T\in\C^{I\times I\times I}$ be a symmetric tensor, and let $\M\in\C^{I\times I}$ be a set of symmetric rank-one matrices. Assume $\mathcal{T}$ is obtained from $T$ by the symmetrical adjoining of matrices in $\M$. Then the symmetric rank of $\mathcal{T}$ equals $3\dim\operatorname{span}\M$ plus the minimal symmetric rank of a symmetric tensor in $\E_{\operatorname{span}\M}(T)$.
\end{conj}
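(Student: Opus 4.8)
The plan is to establish the two inequalities separately: the upper bound $\operatorname{symrank}\mathcal{T}\le 3d+s$ admits a direct constructive proof, whereas the reverse inequality is the main obstacle. Throughout put $V=\operatorname{span}\M$, $d=\dim V$, and let $s$ be the smallest symmetric rank of a symmetric tensor of $\E_V(T)$ (the minimum is attained, since $T\in\E_V(T)$).

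For the upper bound I would proceed as follows. Choose $m_1,\dots,m_d\in\M$ forming a basis of $V$ and write $m_i=v_iv_i^{\mathsf{T}}$ for some $v_i\in\C^I$, which is possible since each $m_i$ is a symmetric matrix of rank one. Fix a symmetric $\tau\in\E_V(T)$ with $\operatorname{symrank}\tau=s$; by the definition of $\E_V$ we can write $T-\tau=A_1+A_2+A_3$, where each $A_\chi$ has all its $\chi$-slices in $V$. Since those slices are combinations of the matrices $v_iv_i^{\mathsf{T}}$, and since applying the symmetrization operator — averaging a tensor over the permutations of its three factors — leaves $T-\tau$ unchanged (both $T$ and $\tau$ are symmetric), one obtains vectors $\ell_1,\dots,\ell_d\in\C^I$ for which
$$T-\tau=\sum_{i=1}^{d}\left(\ell_i\otimes v_i\otimes v_i+v_i\otimes\ell_i\otimes v_i+v_i\otimes v_i\otimes\ell_i\right).$$
In the construction of $\mathcal{T}$ every $m\in\M$ serves as an index of $I'\setminus I$; let $e_m\in\C^{I'}$ be the corresponding standard basis vector, and set
$$z_i=\ell_i+\sum_{m\in\M}\zeta_{i,m}\,e_m\in\C^{I'},\qquad\text{where}\qquad\sum_{i=1}^{d}\zeta_{i,m}\,m_i=m\text{ for all }m\in\M$$
(such scalars exist: put $\zeta_{i,m_j}=\delta_{ij}$ on the basis, and $\zeta_{i,m}=\gamma^m_i$ when $m=\sum_i\gamma^m_i m_i$). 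Comparing $3$-slices shows that, with $\tau$ regarded inside $\C^{I'\times I'\times I'}$,
$$\mathcal{T}=\tau+\sum_{i=1}^{d}\left(z_i\otimes v_i\otimes v_i+v_i\otimes z_i\otimes v_i+v_i\otimes v_i\otimes z_i\right):$$
the slice of the right-hand side at an index $m\in\M$ is $\sum_i\zeta_{i,m}v_iv_i^{\mathsf{T}}=m$, and the slices indexed by $I$ reproduce $T$, the $\ell_i$-parts supplying $T-\tau$ and the $e_m$-parts of the $z_i$ merely populating the adjoined rows and columns. Finally, each summand is a symmetric tensor of symmetric rank at most $3$, since
$$z_i\otimes v_i\otimes v_i+v_i\otimes z_i\otimes v_i+v_i\otimes v_i\otimes z_i=\tfrac{1}{2}(v_i+z_i)^{\otimes 3}-\tfrac{1}{2}(v_i-z_i)^{\otimes 3}-z_i^{\otimes 3},$$
where the three terms on the right are symmetric simple tensors; concatenating the symmetric decompositions gives $\operatorname{symrank}\mathcal{T}\le s+3d$.

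The reverse inequality is where I expect the real difficulty. By Lemma~\ref{lemcompl5}, applied with all three adjoined matrix sets equal to the rank-one set $\M$, one has $\operatorname{rank}\mathcal{T}=3d+\min_{\sigma\in\E_V(T)}\operatorname{rank}\sigma$; since $\operatorname{symrank}\ge\operatorname{rank}$, what remains is to exhibit \emph{some} symmetric tensor of $\E_V(T)$ whose symmetric rank is at most $\operatorname{symrank}\mathcal{T}-3d$. I would try to extract it from a minimal symmetric decomposition $\mathcal{T}=\sum_u w_u^{\otimes 3}$: sort the $w_u$ by whether they are supported on $I$, use the equalities of the slices of $\mathcal{T}$ at the indices $m\in\M$ with the rank-one matrices $m$ — an analogue of Observation~\ref{lemeqsl} — to force linear relations among the $w_u$, and then argue that at least $3d$ of the $w_u$ have a nonzero coordinate outside $I$ while the restrictions to $I$ of the remaining ones sum to a symmetric tensor of $\E_V(T)$. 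The obstacle is that this plan imitates the elimination of rank-one slices in Lemma~\ref{lemtran1}, whose elementary transformations do not preserve symmetry, and no symmetric substitute for them is available in general; providing one in this situation is essentially equivalent to controlling the very phenomenon that underlies Comon's conjecture. That is why I would not expect a short argument, and why the statement is recorded here as a conjecture rather than a theorem.
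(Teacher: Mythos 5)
You correctly recognize that the statement in question is a \emph{conjecture}, not a theorem: the paper offers no proof of it, and indeed the author explicitly remarks right after stating it that he ``did not manage to prove this conjecture even in the special case when $\M$ is a single matrix.'' So there is no ``paper's own proof'' to compare your attempt against; the appropriate thing is to assess whether your partial argument is sound and whether your diagnosis of the obstruction matches the paper's.

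Your proof of the upper bound $\operatorname{symrank}\mathcal{T}\le 3d+s$ is correct, and it is a genuine contribution since the paper never spells this direction out. The key steps all check out: unwinding the definition of $\E_V(T)$ gives $T-\tau=A_1+A_2+A_3$ with the $\chi$-slices of $A_\chi$ in $V$, and averaging over the symmetric group (valid since $T$ and $\tau$ are symmetric) collapses the three families of vectors $a_i,b_i,c_i$ into a single $\ell_i=(a_i+b_i+c_i)/3$; the extension $z_i=\ell_i+\sum_m\zeta_{i,m}e_m$ makes the $m$th slice of the sum equal $\sum_i\zeta_{i,m}v_iv_i^\top=m$, the $v_i$ vanish on the adjoined coordinates so all slices with two adjoined indices are zero, and the restriction to $I^3$ reproduces $T$; and finally the identity
$$z\otimes v\otimes v+v\otimes z\otimes v+v\otimes v\otimes z=\tfrac12(v+z)^{\otimes3}-\tfrac12(v-z)^{\otimes3}-z^{\otimes3}$$
is a straightforward expansion, giving a symmetric rank $\le 3$ expression for each summand (degenerations only lower the count). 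Concatenating with a minimal symmetric decomposition of $\tau$ yields $s+3d$.

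Your discussion of the reverse inequality also accurately matches the paper's situation. As you note, Lemma~\ref{lemcompl5} gives $\operatorname{rank}\mathcal{T}=3d+\min\operatorname{rank}\sigma$, so the missing step is precisely to show that a \emph{minimal symmetric} decomposition of $\mathcal{T}$ can be transformed so as to extract a symmetric tensor in $\E_V(T)$ of symmetric rank $\le\operatorname{symrank}\mathcal{T}-3d$, and the obstacle you identify — that the elementary $\chi$-transformations of Lemma~\ref{lemtran1} destroy symmetry, so Lemma~\ref{lemcompl5} has no known symmetric counterpart — is exactly the obstruction the paper describes. In fact the whole of Section~5 of the paper (Lemmas~\ref{lemr3}--\ref{lem18}) is a workaround for this missing lemma, carried out by hand for the specific tensor $\S$; your paragraph correctly explains why a general argument along these lines is tantamount to resolving Comon's conjecture itself. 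In short: the upper half of your proposal is correct and would be a valid addition, the lower half is correctly identified as open, and this is fully consistent with the paper.
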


At the beginning of the work on this project, the author supposed that at least a weaker form of Conjecture~\ref{conwar} is true. However, he did not manage to prove this conjecture even in the special case when $\M$ is a single matrix. A potential symmetric analogue of Lemma~\ref{lemcompl5} can also be formulated in terms of the \textit{Waring rank} function of homogeneous polynomials. Namely, the quantity $\operatorname{WR}(f)$ is the smallest $w$ for which a polynomial $f$ is the sum of $w$ powers of linear forms.

\begin{conj}\label{conwar2}
Let $f,g\in\C[x_1,\ldots,x_n]$ be homogeneous polynomials of degrees $d$, $d-1$, respectively. Let $u$ be a new variable, and let $\ell[x_1,\ldots,x_n]$ be the set of all linear forms in $x_1,\ldots,x_n$. Then $$\operatorname{WR}(f+ug)\geqslant d+\min_{v\in\ell[x_1,\ldots,x_n]}\operatorname{WR}(f+vg),$$ and if $g$ is a power of a linear form, then the equality holds.
\end{conj}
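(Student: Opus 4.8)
The plan is to treat the inequality ``$\geqslant$'' and the equality clause separately, since only the former carries real content. For the equality clause assume $g=\ell^{d-1}$; then for any linear form $v$ in $x_1,\dots,x_n$ we have $f+ug=(f+vg)+(u-v)\ell^{d-1}$, and because $u-v$ is not proportional to $\ell$ (their coefficients at $u$ differ) the product $(u-v)\ell^{d-1}$ has Waring rank at most $d$. Subadditivity of $\operatorname{WR}$ then gives $\operatorname{WR}(f+ug)\leqslant\operatorname{WR}(f+vg)+d$ for every $v$, hence $\operatorname{WR}(f+ug)\leqslant d+\min_{v}\operatorname{WR}(f+vg)$, which combined with the inequality yields equality.

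For the inequality I would begin with a minimal decomposition $f+ug=\sum_{s=1}^{w}\ell_s^{\,d}$, $w=\operatorname{WR}(f+ug)$, and write each $\ell_s=p_s+\beta_s u$ with $p_s$ a linear form in the $x_i$; by minimality the classes $[\ell_s]$ are distinct. Put $A=\{s:\beta_s\neq0\}$ and $B=\{s:\beta_s=0\}$, and for $s\in A$ set $r_s=p_s/\beta_s$ and $\gamma_s=\beta_s^{\,d}$. Expanding $\sum_{s\in A}\gamma_s(r_s+u)^d$ in powers of $u$ and comparing with $f+ug$, which is linear in $u$, yields: $f=f_A+f_B$ where $f_A=\sum_{s\in A}\gamma_s r_s^{\,d}$ and $f_B=\sum_{s\in B}p_s^{\,d}$; the identity $g=d\sum_{s\in A}\gamma_s r_s^{\,d-1}$; and the relations $\sum_{s\in A}\gamma_s r_s^{\,k}=0$ for $k=0,1,\dots,d-2$. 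A binomial expansion then shows that for \emph{every} linear form $v$ one has
\[
f_A+vg=\sum_{s\in A}\gamma_s(r_s+v)^d ,
\]
and that the vanishing relations survive the substitution $r_s\mapsto r_s+v$. Since $\operatorname{WR}(f+vg)\leqslant\operatorname{WR}(f_A+vg)+\operatorname{WR}(f_B)\leqslant\operatorname{WR}(f_A+vg)+|B|$ and $|B|=w-|A|$, the whole statement reduces to: given distinct linear forms $r_s$ and nonzero scalars $\gamma_s$ ($s\in A$) with $\sum_{s\in A}\gamma_s r_s^{\,k}=0$ for $k\leqslant d-2$, there is a linear form $v$ with $\operatorname{WR}\bigl(\sum_{s\in A}\gamma_s(r_s+v)^d\bigr)\leqslant|A|-d$.

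Half of this reduced claim, that $|A|\geqslant d$, I would get from a Vandermonde argument: choosing a linear functional $\phi$ on $x_1,\dots,x_n$ that separates the $r_s$ and pairing the relation $\sum_{s\in A}\gamma_s r_s^{\,k}=0$ with $\phi^{\,k}$ gives $\sum_{s\in A}\gamma_s\phi(r_s)^k=0$ for $k=0,\dots,d-2$, so $(\gamma_s)_{s\in A}$ lies in the kernel of a $(d-1)\times|A|$ Vandermonde matrix with distinct nodes; if $|A|\leqslant d-1$ that kernel is $0$, contradicting $\gamma_s\neq0$. The substantive half is to choose $v$ well. The crucial structural feature is that $v$ only translates the configuration $\{r_s\}$ rigidly — it is the substitution $u\mapsto v$ — and that $f_A+vg=\sum_{s\in A}\gamma_s(r_s+v)^d$ is the image of $f_A$ under the linear map $r_s\mapsto r_s+v$, which, wherever it is well defined, is the identity plus a rank-one operator. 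Selecting $v$ so that this map drops rank collapses the linear span of the $r_s$ into fewer variables and should push the Waring rank of the image below $|A|-d$; in the binary case $\dim\operatorname{span}\{r_s\}=2$ this can be made to work through Sylvester's theorem, the relevant ``defect'' being encoded in the rational function $\sum_{s\in A}\gamma_s/(1-t_sz)$, whose numerator is forced to be divisible by $z^{\,d-1}$.

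The main obstacle is exactly this last step once $\dim\operatorname{span}\{r_s\}\geqslant3$: after collapsing the configuration one must bound the Waring rank of a weighted union of points lying on a linear subspace, and, in contrast to the non-symmetric Lemma~\ref{lemcompl5}, where one may isolate summands freely by elementary transformations, here only a single simultaneous translation of all the $r_s$ is permitted, so the apolar geometry of the collapsed configuration has to be controlled directly. This is the same phenomenon that makes passing from ordinary rank to symmetric rank delicate — the phenomenon that Conjecture~\ref{concomon} shows can genuinely fail — so I expect this to be the hard part; indeed the case $d=3$ with $g$ a square is, via the tensor--polynomial dictionary, the single-matrix instance of Conjecture~\ref{conwar}, and the difficulty of keeping a decomposition symmetric throughout the reduction is shared by both.
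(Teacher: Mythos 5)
The statement you have been asked to prove is labeled a \emph{conjecture} in the paper, and the author states explicitly that he was unable to prove it (``he did not manage to prove this conjecture even in the special case when $\M$ is a single matrix'', which via the polynomial--tensor dictionary is exactly the case $g=\ell^{d-1}$ here). There is therefore no proof in the paper to compare against; the paper's entire counterexample is built precisely to route around the lack of one.

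With that caveat, your partial progress is correct and well-organized. The upper-bound half of the equality clause is fine: for $g=\ell^{d-1}$ with $\ell\neq0$, the form $(u-v)\ell^{d-1}$ is a nondegenerate binary form in the two-dimensional space $\operatorname{span}(u-v,\ell)$, so Sylvester gives $\operatorname{WR}\bigl((u-v)\ell^{d-1}\bigr)=d$, and subadditivity yields $\operatorname{WR}(f+ug)\leqslant d+\operatorname{WR}(f+vg)$ for every $v$. Your normalization of a minimal decomposition, the expansion in powers of $u$, the resulting vanishing relations $\sum_{s\in A}\gamma_s r_s^{\,k}=0$ for $0\leqslant k\leqslant d-2$, the identity $f_A+vg=\sum_{s\in A}\gamma_s(r_s+v)^d$ for all $v$, and the Vandermonde bound $|A|\geqslant d$ all check out (the latter using that minimality forces the $r_s$ to be pairwise distinct, so a generic evaluation point separates them). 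The reduction to ``find a linear form $v$ with $\operatorname{WR}\bigl(\sum_{s\in A}\gamma_s(r_s+v)^d\bigr)\leqslant|A|-d$'' is correct; note that this step genuinely needs $v$ to range over \emph{all} linear forms including $0$, which is also necessary for the conjecture as stated to be nonvacuously true (otherwise $f=0$, $g$ with distinct roots already gives a counterexample), and that the case $g=0$ must be excluded. But that reduced claim is the whole content of the conjecture: when $|A|=d$ it already demands $\sum\gamma_s(r_s+v)^d=0$ for some $v$, and for $\dim\operatorname{span}\{r_s\}\geqslant3$ there is no argument in sight. You identify this correctly as the step you cannot supply, and your diagnosis -- that a single rigid translation of the $\{r_s\}$ is far weaker than the elementary transformations available in the asymmetric Lemma~\ref{lemcompl5} -- is exactly the obstruction the author also ran into. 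So this is not an overlooked gap in an otherwise complete proof; it is an honest reduction of an open conjecture to its irreducible core, which remains unproved.
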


The author is not aware of counterexamples to Conjectures~\ref{conwar} and~\ref{conwar2}, and he would like to thank Mateusz Miha\l{}ek and Emanuele Ventura for discussing these conjectures with him. Should we be able to prove one of these conjectures, we would present a much simpler counterexample to Comon's conjecture than the one constructed here. The present approach requires a lot of combinatorial issues and technical details to be dealt with, but I hope that the idea of my construction is not completely hidden behind them and can be helpful in studying related problems.



\section{The matrices $\U$, $\V$, $\Lambda$}

The following combinatorial construction is a part of our counterexample.

\begin{obs}\label{obspart}
Consider the following partition
$$\newcommand*{\tempo}{\multicolumn{1}{c|}{1}}
\newcommand*{\temptw}{\multicolumn{1}{c|}{2}}
\newcommand*{\tempth}{\multicolumn{1}{c|}{3}}
\newcommand*{\tempfo}{\multicolumn{1}{c|}{4}}
\newcommand*{\tempfi}{\multicolumn{1}{c|}{5}}
\newcommand*{\tempsi}{\multicolumn{1}{c|}{6}}
\newcommand*{\tempse}{\multicolumn{1}{|c|}{7}}
\newcommand*{\tempei}{\multicolumn{1}{c|}{8}}
\newcommand*{\tempni}{\multicolumn{1}{c|}{9}}
\newcommand*{\tempte}{\multicolumn{1}{c|}{10}}
\newcommand*{\tempoo}{\multicolumn{1}{c|}{11}}
\newcommand*{\temptwo}{\multicolumn{1}{c|}{12}}
\newcommand*{\temptho}{\multicolumn{1}{c|}{13}}
\newcommand*{\tempfoo}{\multicolumn{1}{c|}{14}}
\newcommand*{\tempfio}{\multicolumn{1}{c|}{15}}
\newcommand*{\tempsio}{\multicolumn{1}{c|}{16}}
\newcommand*{\tempseo}{\multicolumn{1}{c|}{17}}
\newcommand*{\tempeio}{\multicolumn{1}{c|}{18}}
\newcommand*{\tempnio}{\multicolumn{1}{c|}{19}}
\newcommand*{\tempteo}{\multicolumn{1}{|c|}{20}}
\begin{array}{|cccccccccc|}\hline
1&1&\tempo&\tempoo&9&9&\tempni&10&10&10\\\cline{4-7}
1&1&\tempo&2&2&\temptw&\temptwo&10&10&10\\\cline{7-10}
1&1&\tempo&2&2&\temptw&3&3&\tempth&13\\\cline{1-3}\cline{10-10}
4&\tempfo&\tempfoo&2&2&\temptw&3&3&\tempth&4\\\cline{3-6}
4&\tempfo&5&5&\tempfi&\tempfio&3&3&\tempth&4\\\cline{6-9}
4&\tempfo&5&5&\tempfi&6&6&\tempsi&\tempsio&4\\\cline{1-2}\cline{9-10}
\tempse&\tempseo&5&5&\tempfi&6&6&\tempsi&7&7\\\cline{2-5}
\tempse&8&8&\tempei&\tempeio&6&6&\tempsi&7&7\\\cline{5-8}
\tempse&8&8&\tempei&9&9&\tempni&\tempnio&7&7\\\cline{1-1}\cline{8-10}
\tempteo&8&8&\tempei&9&9&\tempni&10&10&10\\\hline
\end{array}
$$
of a $100\times 100$ matrix into $20$ submatrices. (Here, each number stands for a $10\times 10$ block.) The union of $k$ of these submatrices is again a submatrix if and only if $k=1$ or $k=20$.
\end{obs}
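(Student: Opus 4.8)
The plan is to analyze the combinatorial structure of the displayed $10\times10$-block partition directly, treating each of the $20$ numbered regions as a single cell in a coarser $10\times10$ ``meta-matrix'' $P$, where $P_{ab}$ records which of the $20$ labels occupies block-row $a$ and block-column $b$. First I would verify that $P$ is well-defined, i.e.\ that each block-row $\times$ block-column position carries a unique label; a glance at the array shows each of the $100$ entries is a single number, so this is immediate. The statement that the union of $k$ of the $20$ submatrices is again a submatrix reduces to a statement about $P$: a set $L\subseteq\{1,\dots,20\}$ of labels has the property that $\bigcup_{\ell\in L}(\text{region }\ell)$ is a combinatorial submatrix (a set of the form $A\times B$ for some row-set $A$ and column-set $B$) precisely when the set of meta-positions $\{(a,b):P_{ab}\in L\}$ equals $A\times B$ for some $A\subseteq\{1,\dots,10\}$, $B\subseteq\{1,\dots,10\}$.

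Next I would establish the ``only if'' direction, which is the substantive one. Suppose $1<|L|<20$; I must exhibit two meta-positions $(a,b)$ and $(a',b')$ with labels in $L$ such that $(a,b')$ or $(a',b)$ has a label outside $L$ (this obstructs the product structure). The natural strategy is to encode, for each label $\ell$, its \emph{row-set} $R_\ell=\{a:\exists b,\ P_{ab}=\ell\}$ and \emph{column-set} $C_\ell=\{b:\exists a,\ P_{ab}=\ell\}$; one checks from the picture that each region $\ell$ is \emph{itself} a combinatorial rectangle $R_\ell\times C_\ell$ of meta-positions (for instance the region labelled $1$ occupies block-rows $1,2,3$ and block-columns $1,2,3$). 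Then a union $\bigcup_{\ell\in L}\ell$ equals $\big(\bigcup_\ell R_\ell\big)\times\big(\bigcup_\ell C_\ell\big)$ if and only if for every $\ell,\ell'\in L$ the rectangle $R_\ell\times C_{\ell'}$ is covered by regions in $L$. So the combinatorial core is: the only ``closed'' families of these $20$ rectangles (closed under ``if $\ell,\ell'$ are in, every region meeting $R_\ell\times C_{\ell'}$ is in'') are the singletons and the whole set. I would prove this by showing the ``meeting'' relation makes the $20$ labels into a single strongly connected / indecomposable structure: concretely, build an auxiliary graph on $\{1,\dots,20\}$ joining $\ell\sim\ell'$ whenever forcing $\ell,\ell'\in L$ forces some third label in, and check this graph is connected with the extra feature that from any edge one is eventually forced to swallow everything. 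This is essentially a finite case-check exploiting the ``staircase'' adjacency pattern visible along the diagonal of the array.

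The ``if'' direction is trivial: for $k=1$ each region is the rectangle $R_\ell\times C_\ell$ by the observation above, and for $k=20$ the union is the full $100\times100$ matrix, which is a submatrix. I would state these two cases in one line.

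The step I expect to be the main obstacle is the forcing/connectivity argument in the ``only if'' direction: one must argue carefully that \emph{every} proper nonempty label set fails, not merely some, and the cleanest way is probably to show that if $L$ is closed and contains at least one label $\ell_0$, then tracking the ``corner'' blocks of $\ell_0$ and the regions diagonally adjacent to them forces a chain $\ell_0\to\ell_1\to\ell_2\to\cdots$ that cycles through all $20$ labels before returning. Making this chain explicit — essentially reading off, for each region, the (at most two) regions that share a full block-row or block-column ``kink'' with it, and verifying the resulting functional/relational digraph is a single cycle (or at least strongly connected) on all $20$ vertices — is the bookkeeping-heavy heart of the proof, but it is a bounded finite verification and introduces no conceptual difficulty beyond patience with the diagram.
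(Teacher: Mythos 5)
The paper itself offers no proof of Observation~\ref{obspart}; it is asserted as a direct combinatorial fact to be read off from the picture, so there is no argument in the paper to compare yours against.

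Your structural setup is correct and would be a sensible way to organize the verification. The reduction to a $10\times10$ meta-grid $P$ is valid: since every region is a union of whole $10\times10$ blocks, any union of regions has its row- and column-supports made of whole block-intervals, so ``is a submatrix of the $100\times100$ matrix'' is equivalent to ``is a combinatorial rectangle in $P$.'' Your observation that each individual region is itself a rectangle $R_\ell\times C_\ell$ in $P$ also checks out (labels $1$--$10$ occupy $3\times3$ sets of meta-cells, labels $11$--$20$ single meta-cells), which disposes of $k=1$, and $k=20$ is trivial. The reformulation of the only-if direction --- every set $L$ with $1<|L|<20$ fails to be closed under $(\ell,\ell')\mapsto\{$labels appearing in $R_\ell\times C_{\ell'}\}$ --- is also right.

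The gap is that you never carry out the only step that has any content. You correctly reduce the observation to a finite forcing/closure verification on $20$ labels, announce that it is ``a bounded finite verification'' posing ``no conceptual difficulty,'' and stop. But that verification \emph{is} the observation; nothing else is being claimed. Moreover, the abstraction you reach for --- a graph on the $20$ labels with edges $\ell\sim\ell'$ ``whenever forcing $\ell,\ell'\in L$ forces some third label in,'' with strong connectivity doing the rest --- is not quite the right one. Closure under the operator $L\mapsto L\cup\{P_{ab}: a\in\bigcup_{\ell\in L}R_\ell,\ b\in\bigcup_{\ell\in L}C_\ell\}$ is a hull operator, not an edge relation, and connectivity of a derived graph does not by itself give that the hull of every doubleton is the full set; you would need either to exhibit explicit forcing chains covering all $\binom{20}{2}$ starting pairs (with symmetry reductions), or to find a cleaner invariant of the partition that certifies indecomposability. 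As written, the proposal identifies the right target and the right coordinates but leaves the proof itself undone.
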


Let $L^i$ be the $100\times100$ matrix which has ones at the entries of the $i$th submatrix as in Observation~\ref{obspart} and zeros everywhere else. We denote by $u_i, v_i^\top$ the $100$-vectors which have ones, respectively, at the $1$-, $2$-supports of $L^i$ and zeros everywhere else. (In particular, the $u_i, v_i$ are such that the equality $L^i=u_iv_i^\top$ holds.) Taking $C$ to be the permutation matrix corresponding to $(1\,2\,\ldots\,100)$, that is, $C=E_{12}+E_{23}+\ldots+E_{99,100}+E_{100,1}$, we define the $100\times 100$ matrices $$U^i(a)=C^{-a}u_i u_i^\top C^a,\,\,\,V^i(a)=C^{-a}v_i v_i^\top C^a,\,\,\,L^i(a)=C^{-a}u_iv_i^\top C^a.$$ 
Now let $\U^i_{12},\V^i_{12},\Lambda^i_{12}$ be the $500\times500$ matrices which have zeros everywhere except the principal submatrices whose row and column indexes are in $\I_{12}$, and these submatrices are set to equal (assuming $O$ is a zero $100\times 100$ block)
$$\left(\begin{array}{c|c}U^i(1)&O\\\hline O&O\end{array}\right),\,\,\,
\left(\begin{array}{c|c}O&O\\\hline O&V^i(1)\end{array}\right),\,\,\,
\left(\begin{array}{c|c}U^i(1)&L^i(1)\\\hline L^i(1)^\top&V^i(1)\end{array}\right),$$
respectively. 
We define $\U^i_{13},\V^i_{13},\Lambda^i_{13}$ in the same way but with $\I_{12}$, $U^i(1)$, $V^i(1)$, $L^i(1)$ replaced by $\I_{13}$, $U^i(2)$, $V^i(2)$, $L^i(2)$, respectively. If we use $\I_{23}$, $U^i(3)$, $V^i(3)$, $L^i(3)$ instead of the above quadruples, then we get the matrices $\U^i_{23},\V^i_{23},\Lambda^i_{23}$, and if we use $\I_{45}$, $U^i(4)$, $V^i(4)$, $L^i(4)$, then we get $\U^i_{45},\V^i_{45},\Lambda^i_{45}$. Finally, we define $\U^i_{1234},\V^i_{1234},\Lambda^i_{1234}$ to be the matrices which have zeros everywhere except the principal submatrices corresponding to $\I_{1234}$, and these submatrices are
$$\left(\begin{array}{c|c|c|c}U^i(5)&U^i(5)&U^i(5)&O\\\hline U^i(5)&U^i(5)&U^i(5)&O\\\hline U^i(5)&U^i(5)&U^i(5)&O\\\hline O&O&O&O\end{array}\right),\,\,\,
\left(\begin{array}{c|c|c|c}O&O&O&O\\\hline O&O&O&O\\\hline O&O&O&O\\\hline O&O&O&V^i(5) \end{array}\right),$$
$$\left(\begin{array}{c|c|c|c}U^i(5)&U^i(5)&U^i(5)&L^i(5)\\\hline U^i(5)&U^i(5)&U^i(5)&L^i(5)\\\hline U^i(5)&U^i(5)&U^i(5)&L^i(5)\\\hline L^i(5)^\top&L^i(5)^\top&L^i(5)^\top&V^i(5)\end{array}\right),$$ respectively.
Finally, we denote by $\B_*$ the set $\cup_i\{\U^i_{*}, \V^i_*, \Lambda^i_*\}$, and we write $\B=\B_{12}\cup\B_{13}\cup\B_{23}\cup\B_{45}\cup\B_{1234}$. (Here and in what follows, the symbol $*$ in the subscript stands for one of $12,13,23,45,1234$.)
We denote by $\L_*$ (or $\L$) the $\C$-linear space spanned by matrices in $\B_*$ (or $\B$, respectively). We denote by $\D_*^i$ the set containing $\Lambda_*^i$, $\U^i_{*}$, $\V^i_*$ and all other rank-one matrices in the linear span of these, and we write $\D_*=\D_*^1\cup\ldots\cup\D_*^{20}$. In the following lemma, we prove (a somewhat stronger statement than) the fact that $\D_*$ contains all rank-one matrices that belong to $\L_*$.

\begin{lem}\label{lemdif}
Let $\lambda(b)\in\C$ and assume $B_*=\sum_{b\in\B_*}\lambda(b) b$. If all pairwise row differences of $B_*$ are collinear, and if all pairwise column differences of $B_*$ are collinear, then $B_*$ either is a clone or belongs to $\D_*$.
\end{lem}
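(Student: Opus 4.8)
The plan is to peel $B_*$ down to a completely explicit small matrix and then read the dichotomy off the combinatorial rigidity in Observation~\ref{obspart}. First I would use that every $b\in\B_*$ vanishes outside the principal submatrix indexed by $\I_*$, so $B_*$ may be replaced by its restriction to $\I_*$, written in block form: a $2\times2$ array of $100\times100$ blocks when $*\in\{12,13,23,45\}$, and a $4\times4$ array when $*=1234$. Each block is a linear combination of $U^i(a),V^i(a),L^i(a)$ for the single $a$ attached to $*$. Conjugating the block matrix by $\operatorname{diag}(C^a,\dots,C^a)$ replaces these by $u_iu_i^\top,v_iv_i^\top,u_iv_i^\top$; this conjugation permutes the rows and, separately, the columns, and any permutation of rows together with any permutation of columns preserves both hypotheses (reordering rows leaves the set of pairwise row differences unchanged, and a column permutation applies one fixed bijection to all rows, hence preserves collinearity). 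Finally every matrix in sight is constant on the $10\times10$ blocks of the partition, so I would contract each $100\times100$ block to its $10\times10$ version, which changes neither hypothesis nor conclusion. After these reductions it suffices, for $*=12$, to analyse
\[
\widetilde B=\begin{pmatrix}\sum_i(\alpha_i+\gamma_i)\,p_ip_i^\top & \sum_i\gamma_i\,p_iq_i^\top\\ \sum_i\gamma_i\,q_ip_i^\top & \sum_i(\beta_i+\gamma_i)\,q_iq_i^\top\end{pmatrix},
\]
where $p_i,q_i\in\{0,1\}^{10}$ indicate the row-bands and column-bands occupied by the $i$th region of Observation~\ref{obspart}, and $\alpha_i,\beta_i,\gamma_i$ are $\lambda(\U^i_{12}),\lambda(\V^i_{12}),\lambda(\Lambda^i_{12})$. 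For $*\in\{13,23,45\}$ one gets a matrix of exactly the same shape, and for $*=1234$ its $4\times4$-block analogue, in which three of the four block-rows and three of the four block-columns coincide, so that it collapses to a $2\times2$-block problem of the same type.

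\emph{One region, and an atomic fact.} Each of $\U^i_*,\V^i_*,\Lambda^i_*$ is rank one, and on its support any element of $\operatorname{span}(\U^i_*,\V^i_*,\Lambda^i_*)$ is a blow-up of the symmetric $2\times2$ matrix $\bigl(\begin{smallmatrix}\alpha_i+\gamma_i & \gamma_i\\ \gamma_i & \beta_i+\gamma_i\end{smallmatrix}\bigr)$, supported on $\operatorname{supp}(p_i)\sqcup\operatorname{supp}(q_i)$. Such a matrix has at most two distinct nonzero rows and at most two distinct nonzero columns, hence satisfies both hypotheses for free; it lies in $\D^i_*$ precisely when that $2\times2$ matrix is singular, i.e.\ $\alpha_i\beta_i+\gamma_i(\alpha_i+\beta_i)=0$, and is a clone otherwise. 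The combinatorial fact I would isolate here, to be used below, is: \emph{a set of distinct $0$-$1$ vectors all of whose pairwise differences are collinear has at most two members}; consequently a $0$-$1$ matrix whose rows are pairwise collinear has at most two distinct nonzero rows, and one that is also column-collinear is, up to equivalence, a combinatorial rectangle or a disjoint union of two parallel rectangles.

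\emph{More than one region.} For a general $B_*$ I would examine the off-diagonal block $\sum_i\gamma_i\,p_iq_i^\top$, whose nonzero pattern is the union of the regions $i$ with $\gamma_i\neq0$, each filled with the value $\gamma_i$. Row-collinearity of $\widetilde B$ forces this block to have essentially one distinct nonzero row — any variation in the ``second block'' of coordinates of a top row is obstructed by the bottom rows, which are zero there — and, symmetrically, one distinct nonzero column, so by the atomic fact the union of those regions is a combinatorial rectangle; at this point Observation~\ref{obspart} forces the number of such regions to be $1$ or $20$. A parallel discussion handles each diagonal block $\sum_i c_i\,p_ip_i^\top$, the difference being that its supports are the overlapping ``squares'' $\operatorname{rows}(i)\times\operatorname{rows}(i)$; again the atomic fact together with Observation~\ref{obspart} forces the coefficient data to concentrate on a single region or to be of one of a few ``disjoint-support'' types. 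Carrying out the resulting finite case split — and using that two distinct regions never nest, being disjoint rectangles — I expect to conclude that $B_*$ is always either a clone (a single-region matrix outside the singular locus above, a direct sum of two building blocks with disjoint supports, or an ``all-twenty'' combination such as $\sum_i(\Lambda^i_*-\U^i_*-\V^i_*)$) or a rank-one member of some $\operatorname{span}(\U^i_*,\V^i_*,\Lambda^i_*)$, hence of $\D^i_*\subseteq\D_*$. The case $*=1234$ reduces to the $2\times2$-block analysis by the collapse noted above, after one separately checks that the threefold repetition of blocks is compatible with the hypotheses only in the clone or $\D_*$ situations.

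\emph{Main obstacle.} The algebraic reductions of the first step are routine; the real work is the bookkeeping of the third step — turning ``the rows lie on one line \emph{and} the columns lie on one line'' into the exact finite list of surviving configurations without overlooking a degenerate one, and invoking Observation~\ref{obspart} at the right place each time. The delicate point is that the off-diagonal block carries \emph{rectangular} region-supports while the diagonal blocks carry \emph{overlapping square} region-supports, so the ``an indicator matrix with collinear rows and columns is a rectangle'' step has to be run in two slightly different forms and then stitched together; the case $*=1234$ compounds this, since the three repeated blocks interact with the $L^i(5)$ off-diagonal blocks. Making the rigidity of Observation~\ref{obspart} bite in every branch — leaving no gap through which a non-clone matrix outside $\D_*$ could sneak — is the crux.
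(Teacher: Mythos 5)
Your reduction is sound and matches the paper's: restricting to $\I_*$, undoing the cyclic shift, and contracting the constant $10\times10$ blocks is equivalent to the paper's device of deleting duplicate rows and columns, and both land on a $20\times20$ matrix $B'=\bigl(\begin{smallmatrix}U&L\\L^\top&V\end{smallmatrix}\bigr)$ with $U=\sum_i(\alpha_i+\gamma_i)p_ip_i^\top$, $L=\sum_i\gamma_i p_iq_i^\top$, $V=\sum_i(\beta_i+\gamma_i)q_iq_i^\top$. Your \emph{atomic fact} about $0$-$1$ vectors with collinear pairwise differences is also correct. The gaps are all in the third step, which you flag as the crux but do not actually carry out, and they are not mere bookkeeping.

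First, the off-diagonal block $L=\sum_i\gamma_i p_iq_i^\top$ is \emph{not} a $0$-$1$ matrix: each region $i$ is filled with the complex number $\gamma_i$, and nothing forces the nonzero $\gamma_i$ to be equal. The atomic fact therefore cannot be applied to the "nonzero pattern" of $L$ until you first prove, from the collinearity hypothesis, that the nonzero $\gamma_i$ coincide; that is precisely the rigidity the lemma is supposed to produce, so it cannot be assumed. Second, the parenthetical justification you offer for "one distinct nonzero row" — that variation in the right half of a top row is "obstructed by the bottom rows, which are zero there" — contradicts the block structure you wrote down: the bottom block-row $(\sum_i\gamma_i q_ip_i^\top\mid\sum_i(\beta_i+\gamma_i)q_iq_i^\top)$ is not zero in either half, so no such obstruction is available. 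Some interplay between top and bottom rows is indeed what is needed (the paper uses it to show that if $U,V$ are both zero then $L$ must be constant), but it is not the one you sketch. Third, and most seriously, the diagonal blocks are built from the \emph{overlapping} squares $\operatorname{supp}(p_i)\times\operatorname{supp}(p_i)$, not from the disjoint rectangles of Observation~\ref{obspart}; the "union of supports is again a support" rigidity does not transfer, and your plan to run "a parallel discussion" for them is exactly where the paper instead switches to a direct case split on whether $U$ (or $V$) has a nonzero off-diagonal entry. Cases such as two $3\times3$ regions whose coefficients cancel off-diagonally while leaving a diagonal residue are the ones that must be excluded, and the sketch does not do so. Until those three points are closed, the dichotomy "clone or in $\D_*$" is not established.
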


\begin{proof}
To be definite, we assume $\B_*=\B_{12}$; the other cases are similar. By our construction, the matrix $B_{12}$ has $300$ zero rows and $300$ zero columns, and every non-zero row and every non-zero column appears ten times. We remove all the zero rows and $180$ non-zero rows from $B_{12}$ in such a way that no pair of equal rows remains. Further, we remove the columns with indexes corresponding to those of removed rows, and we denote the resulting matrix by $B'$. 
We have
$$B'=\left(\begin{array}{cc}
U&L\\
L^\top&V
\end{array}\right),$$
and the indexing sets of $L$ correspond to the $10$ rows and $10$ columns of the partitioned matrix as in Observation~\ref{obspart}. Let us denote by $\alpha_i,\beta_i^\top$ the two vectors in $\{0,1\}^{10}$ such that $\alpha_i\beta_i^\top$ has the support of the $i$th matrix in the partition as in Observation~\ref{obspart}. In this notation, the matrix obtained from $\Lambda_{12}^i$ after the transformation described in the first paragraph is $(\alpha_i|\beta_i)(\alpha_i|\beta_i)^\top$, the matrix obtained from $\U_{12}^i$ is $(\alpha_i|0\ldots0)(\alpha_i|0\ldots0)^\top$, and the matrix obtained from $\V_{12}^i$ is
$(0\ldots0|\beta_i)(0\ldots0|\beta_i)^\top$. We are now ready to complete the proof with the use of the above observations.

In particular, if the off-diagonal entries of $U,V$ are zero, then these matrices should contain at most one non-zero entry. If one of these matrices is non-zero, then we can check that $B_{12}\in\D_{12}^{11}\cup\ldots\cup\D_{12}^{20}$. If both $U$ and $V$ are zero matrices, then all the entries of $L$ should be equal, and in this case $B_{12}$ is a clone.

Finally, it remains to consider the case when $U$ (or $V$) has a non-zero non-diagonal entry. In this case, the non-zero entries of $U$ should be concentrated within a principal $3\times 3$ submatrix, and the same statement is true for $V$. We complete the proof by checking that $B_{12}\in\D_{12}^{1}\cup\ldots\cup\D_{12}^{10}$.
\end{proof}

\begin{lem}\label{lemclone}
For $\lambda(b)\in\C$, the matrix $B_{*}=\sum_{b\in\B_{*}}\lambda(b) b$ is a clone if and only if $$\lambda\left(\Lambda_{*}^1\right)=-\lambda\left(\U_{*}^1\right)=-\lambda\left(\V_{*}^1\right)=\ldots=\lambda\left(\Lambda_{*}^{20}\right)=-\lambda\left(\U_{*}^{20}\right)=-\lambda\left(\V_{*}^{20}\right).$$
\end{lem}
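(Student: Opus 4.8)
The plan is to prove the two implications separately, working throughout with the $10\times10$-block structure already used in the proof of Lemma~\ref{lemdif}. After the fixed simultaneous relabeling of the $100$ coordinates that makes every matrix in $\B_*$ constant on $10\times10$ blocks of rows and of columns, passing to the $20\times20$ principal submatrix that keeps one row and one column per block is an injective linear map on $\operatorname{span}\B_*$; under it, for $*=12$ the matrices $\U_{12}^i$, $\V_{12}^i$, $\Lambda_{12}^i$ become $(\alpha_i\,|\,0)(\alpha_i\,|\,0)^{\top}$, $(0\,|\,\beta_i)(0\,|\,\beta_i)^{\top}$, $(\alpha_i\,|\,\beta_i)(\alpha_i\,|\,\beta_i)^{\top}$, where $\alpha_i,\beta_i\in\{0,1\}^{10}$ are as in the proof of Lemma~\ref{lemdif}, so $B_{12}$ becomes the $20\times20$ matrix with diagonal blocks $\sum_i(\lambda(\U_{12}^i)+\lambda(\Lambda_{12}^i))\alpha_i\alpha_i^{\top}$ and $\sum_i(\lambda(\V_{12}^i)+\lambda(\Lambda_{12}^i))\beta_i\beta_i^{\top}$ and off-diagonal block $\sum_i\lambda(\Lambda_{12}^i)\alpha_i\beta_i^{\top}$. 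The cases $13,23,45$ are identical (only the shift exponent and the ambient index set change), and $1234$ is the variant in which the first diagonal block becomes a $3\times3$ array of copies of $\sum_i(\lambda(\U_{1234}^i)+\lambda(\Lambda_{1234}^i))\alpha_i\alpha_i^{\top}$ and there are three identical copies of the off-diagonal block.

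For the ``only if'' direction, assume $B_*$ is a clone; then its image under the above map is $\left(\begin{smallmatrix}0&cJ\\cJ^{\top}&0\end{smallmatrix}\right)$ for a scalar $c$, with $J$ an all-ones matrix. Matching the diagonal blocks with the expression above gives $\sum_i(\lambda(\U_*^i)+\lambda(\Lambda_*^i))\alpha_i\alpha_i^{\top}=0$ and $\sum_i(\lambda(\V_*^i)+\lambda(\Lambda_*^i))\beta_i\beta_i^{\top}=0$. The crucial input is that the families $\{\alpha_i\alpha_i^{\top}\}_{i=1}^{20}$ and $\{\beta_i\beta_i^{\top}\}_{i=1}^{20}$ are each linearly independent, which I would verify cell by cell: for each of the ten ``width-$3$'' submatrices $i$ of Observation~\ref{obspart} the cell joining the two endpoints of its row-interval lies in $\alpha_i\alpha_i^{\top}$ and in no $\alpha_j\alpha_j^{\top}$ with $j\neq i$, forcing those ten coefficients to vanish, and then the ten singleton submatrices (whose squares are distinct diagonal matrix units) force the remaining coefficients to vanish via the diagonal cells; the same argument with column-intervals handles the $\beta$'s. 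Hence $\lambda(\U_*^i)=\lambda(\V_*^i)=-\lambda(\Lambda_*^i)$ for all $i$. Matching the off-diagonal block then gives $\sum_i\lambda(\Lambda_*^i)\alpha_i\beta_i^{\top}=cJ$, and since the matrices $\alpha_i\beta_i^{\top}$ have as supports exactly the twenty submatrices of Observation~\ref{obspart}, which partition the $10\times10$ grid, reading off the cell inside the $i$th submatrix gives $\lambda(\Lambda_*^i)=c$. Together these conclusions are the asserted chain of equalities.

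For the ``if'' direction, suppose the chain holds with common value $c$, so that $B_*=c\sum_i(\Lambda_*^i-\U_*^i-\V_*^i)$. Then $\lambda(\U_*^i)+\lambda(\Lambda_*^i)=\lambda(\V_*^i)+\lambda(\Lambda_*^i)=0$ for every $i$, so in the expression of the first paragraph both diagonal blocks vanish, while the off-diagonal block equals $c\sum_i\alpha_i\beta_i^{\top}=cJ$ because the supports of the submatrices of Observation~\ref{obspart} partition the grid. Expanding the blocks back to full size, $B_*$ is therefore equivalent to $\left(\begin{smallmatrix}0&cJ'\\cJ'^{\top}&0\end{smallmatrix}\right)$ with $J'$ an all-ones matrix, i.e.\ $B_*$ is a clone.

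I expect the linear independence of $\{\alpha_i\alpha_i^{\top}\}$ and $\{\beta_i\beta_i^{\top}\}$ to be the main obstacle, since it is the only place that genuinely uses the combinatorics of Observation~\ref{obspart}: one must locate, for each width-$3$ submatrix, a cell covered by no other submatrix-rectangle, and then bootstrap through the ten singletons. Everything else is routine and essentially uniform over the five values of $*$.
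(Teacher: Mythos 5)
Your strategy mirrors the paper's very terse proof: pass to the $20\times20$ block representative of $B_*$, identify its diagonal blocks as $\sum_i(\lambda(\U_*^i)+\lambda(\Lambda_*^i))\alpha_i\alpha_i^\top$ (resp.\ with the $\beta_i$'s), its off-diagonal block as $\sum_i\lambda(\Lambda_*^i)\alpha_i\beta_i^\top$, and then exploit the combinatorics of Observation~\ref{obspart}. The reduction, the ``if'' direction, and the partition argument giving $\lambda(\Lambda_*^i)=c$ are all sound, as is the proposed cell-by-cell verification of linear independence.

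The ``only if'' direction has a gap, however. You assert that the $20\times20$ image of a clone is $\left(\begin{smallmatrix}0&cJ\\cJ^\top&0\end{smallmatrix}\right)$, but a clone of a $5\times5$ matrix is only required to be \emph{constant} on each $10\times10$ block, so a priori the image is $\left(\begin{smallmatrix}aJ&cJ\\cJ^\top&dJ\end{smallmatrix}\right)$. Matching the top-left block then gives $\sum_i(\lambda(\U_*^i)+\lambda(\Lambda_*^i))\alpha_i\alpha_i^\top=aJ$, not $=0$, and linear independence of the twenty $\alpha_i\alpha_i^\top$ does not by itself force $a=0$: you additionally need $J\notin\operatorname{span}\{\alpha_i\alpha_i^\top\}$. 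This can be closed with the tools you already set up. The joining-cell step gives $\lambda(\U_*^i)+\lambda(\Lambda_*^i)=a$ for each of the ten width-$3$ submatrices; now pick an off-diagonal cell whose two row-indices lie in the (cyclic) row-supports of \emph{two} distinct width-$3$ submatrices — for instance rows $1$ and $2$ lie in the row-supports of submatrices $1$ and $10$ — so that cell of $aJ$ evaluates to $2a$, giving $2a=a$ and hence $a=0$; the same argument gives $d=0$. Only after this step do your displayed block equations hold, and the rest of the argument goes through.
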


\begin{proof}
The conditions $\lambda\left(\Lambda_{*}^1\right)=\ldots=\lambda\left(\Lambda_{*}^{20}\right)$ follow directly from Observation~\ref{obspart}, and the other equations are necessary to make the diagonal blocks zero.
\end{proof}

\begin{lem}\label{lemrank}
The matrices in $\B$ are linearly independent, that is, $\dim\L=300$.
\end{lem}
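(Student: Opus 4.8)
The plan is to set a vanishing combination $\sum_{*,i}\bigl(\lambda^i_*\U^i_*+\mu^i_*\V^i_*+\nu^i_*\Lambda^i_*\bigr)=0$ and peel off all $300$ scalars block by block. Regard the ambient $500\times500$ space as a $5\times5$ array of $100\times100$ blocks indexed by pairs from $\{1,\dots,5\}$, and for each block read off which members of $\B$ have a nonzero $100\times100$ ``atom'' there; every atom is one of $L^i(a)$, $U^i(a)$, $V^i(a)$ with $a\in\{1,\dots,5\}$, and which one is completely forced by the definitions. For instance the only contributions to the $(1,4)$ block come from the $\Lambda^i_{1234}$ (atom $L^i(5)$), those to $(4,5)$ only from the $\Lambda^i_{45}$, those to $(5,5)$ only from the $\V^i_{45}$; the $(1,2)$ block collects $\nu^i_{12}L^i(1)$ together with $(\lambda^i_{1234}+\nu^i_{1234})U^i(5)$; and the diagonal block $(1,1)$ collects $(\lambda^i_{12}+\nu^i_{12})U^i(1)+(\lambda^i_{13}+\nu^i_{13})U^i(2)+(\lambda^i_{1234}+\nu^i_{1234})U^i(5)$, summed over $i$.

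I would then eliminate in this order. The blocks $(1,4)$, $(2,4)$, $(3,4)$ give $\sum_i\nu^i_{1234}L^i(5)=0$, the block $(4,5)$ gives $\sum_i\nu^i_{45}L^i(4)=0$, and $(5,5)$ gives $\sum_i\mu^i_{45}V^i(4)=0$. Using $\nu^i_{1234}=0$, the blocks $(1,2)$, $(1,3)$, $(2,3)$ then give $\sum_i\nu^i_{12}L^i(1)+\sum_i\lambda^i_{1234}U^i(5)=0$ and its two analogues, killing $\nu^i_{12},\nu^i_{13},\nu^i_{23}$ and $\lambda^i_{1234}$. After that the diagonal blocks $(1,1)$, $(2,2)$, $(3,3)$, $(4,4)$ reduce respectively to $\sum_i\lambda^i_{12}U^i(1)+\sum_i\lambda^i_{13}U^i(2)=0$, $\sum_i\mu^i_{12}V^i(1)+\sum_i\lambda^i_{23}U^i(3)=0$, $\sum_i\mu^i_{13}V^i(2)+\sum_i\mu^i_{23}V^i(3)=0$, and $\sum_i\lambda^i_{45}U^i(4)+\sum_i\mu^i_{1234}V^i(5)=0$, killing the remaining scalars. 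That accounts for all $300$, proving $\B$ linearly independent and $\dim\L=300$.

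Every step above rests on one kind of auxiliary statement, which is where the actual work sits: a linear combination of atoms $X^i(a)$ drawn from (at most) two of the three families $\{L^i(a)\}_i$, $\{U^i(a)\}_i$, $\{V^i(a)\}_i$, with shifts $a$ among $1,\dots,5$, vanishes, and I must conclude that all coefficients are zero. For the $L$'s alone this is immediate: the pieces in Observation~\ref{obspart} form a \emph{partition}, so the $L^i$ have pairwise disjoint supports and conjugation by $C^a$ preserves that. For the rest I would exploit the explicit staircase geometry. Up to conjugation, each $U^i(a)$ or $V^i(a)$ equals $\mathbf 1_S\mathbf 1_S^{\top}$ for a cyclic interval $S$ that, for the ten ``big'' pieces, has length $30$ and starts at a multiple of $10$, and, for the ten ``small'' pieces, has length $10$; the $L^i(a)$ are the analogous rectangles $\mathbf 1_S\mathbf 1_T^{\top}$, with $S\ne T$ except for the two big pieces whose row- and column-supports coincide. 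The distinctness of all these intervals, together with the fact that the nonzero offsets between row- and column-supports of the big pieces run through $20,40,60,80$, lets me produce for each big piece a matrix entry lying in its atom and in no competitor — an endpoint pair whose two coordinates are at cyclic distance $\ge31$ whenever $S\ne T$, so that no length-$30$ interval straddles them; once all big-piece coefficients are gone, the remaining length-$10$ atoms are dispatched the same way in a second pass. The choice of five \emph{distinct} shifts, all smaller than the grid step $10$, is precisely what keeps the shifted intervals in sufficiently generic position for such witnesses to exist.

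I expect this last family of linear-independence facts to be the crux. The shifted supports overlap heavily — in the worst cases an $L$-atom is genuinely symmetric and coincides ``in shape'' with a $U$-atom, and a big interval of length $30$ contains a small one — so no single entry separates everything, and the argument must be staged (big pieces before small, and occasionally one family before another) and checked against the exact offsets; the anti-symmetric part of the combination, which sees only the non-symmetric $L$-atoms, is a convenient device for part of it. Everything else — the block decomposition and the elimination order — is bookkeeping once this tool is in place.
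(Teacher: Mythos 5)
Your overall strategy is genuinely different from the paper's, and while it is probably sound in outline, it leaves the hardest step as a sketch, whereas the paper's proof is a short argument that sidesteps that step entirely.

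The paper proceeds by first showing that in any vanishing combination $B=B_{12}+B_{13}+B_{23}+B_{45}+B_{1234}$ with $B_*=\sum_{b\in\B_*}\lambda(b)b$, each $B_*$ must be a \emph{clone} (constant on the $100\times100$ blocks). The observation that makes this painless is that the five families are built from five different cyclic shifts $a=1,\dots,5$, so for every matrix $b\in\B\setminus\B_{12}$ the rows with indices $(j,10t+1)$ and $(j,10t+2)$ agree, while if $B_{12}$ is not a clone some such pair of rows of $B_{12}$ disagrees. The row-pair index thus isolates $\B_{12}$ inside the sum with no block-by-block bookkeeping at all; the other four families are handled by the analogous row-pairs $(j,10t+a)$, $(j,10t+a+1)$. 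Once each $B_*$ is known to be a clone, it is automatically collinear with the clone of one of five manifestly independent symmetric $5\times5$ matrices, so $B_*=0$, and Lemma~\ref{lemclone} finishes. In short, the paper never has to prove that any collection of shifted atoms $U^i(a)$, $V^i(a)$, $L^i(a)$ is linearly independent; the clone characterization replaces all of that.

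Your route goes the opposite way: decompose the ambient space into a $5\times5$ array of $100\times100$ blocks, read off which atoms live where, eliminate in a fixed order, and reduce everything to independence statements for shifted atom families inside a single $100\times100$ block. The bookkeeping is basically right (minor slip: the $(5,5)$ block gives $\sum_i(\mu^i_{45}+\nu^i_{45})V^i(4)=0$, which only becomes $\sum_i\mu^i_{45}V^i(4)=0$ after $\nu^i_{45}=0$ has been extracted from the $(4,5)$ block — so the elimination order you list must actually be respected, not read as simultaneous). But the load-bearing claims are exactly the ones you defer: that, for various pairs of shifts $a\ne a'$, the families $\{U^i(a)\}_i$, $\{U^i(a)\}_i\cup\{U^i(a')\}_i$, $\{L^i(a)\}_i\cup\{U^i(a')\}_i$, and so on, are linearly independent. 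For the $L$'s alone disjointness of supports does it, as you say; but for the $U$'s and $V$'s the supports overlap heavily (adjacent big intervals $S_i$, $S_{i+1}$ share twenty indices), and for the mixed families there are real collisions — for instance $L^1=U^1$ since piece $1$ is a diagonal block, so $L^1(a)=U^1(a)$ for every $a$, and the anti-symmetrization trick gives you nothing for the two symmetric $L$-atoms (pieces $1$ and $6$). Your sketch of the witness-entry argument also doesn't quite parse: you write of an entry ``whose two coordinates are at cyclic distance $\ge31$'' as lying in the atom, but an entry of a length-$30$ square atom has coordinates at cyclic distance $\le 29$, so the intended quantifier structure needs to be spelled out. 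None of this is evidence the approach fails — the shifts $1,\dots,5$ really are chosen so the shifted intervals sit in general position — but it is exactly the combinatorial case analysis that the paper's clone argument was designed to avoid, and until it is carried out the proof has a genuine gap where the paper has a two-line observation.
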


\begin{proof}
Assume that $B=B_{12}+B_{23}+B_{13}+B_{45}+B_{1234}$ is a zero matrix, where
$B_{*}=\sum_{b\in\B_{*}}\lambda(b) b$. If $B_{12}$ is not a clone, then there is $j\in\{1,2\}$ and $t$ for which the rows of $B_{12}$ with indexes $(j,10t+1)$, $(j,10t+2)$ are different. The corresponding rows of $b\in\B\setminus\B_{12}$ are equal by construction, so the difference between the rows of $B$ with indexes $(j,10t+1)$, $(j,10t+2)$ is non-zero. This contradiction shows that $B_{12}$ is a clone, and we can check that $B_{23},B_{13},B_{45},B_{1234}$ are clones by a similar argument. 
In particular, the matrices $B_*$ are collinear to the clones of
$$E_{12}+E_{21},\,E_{13}+E_{31},\,E_{23}+E_{32},\,E_{14}+E_{24}+E_{34}+E_{41}+E_{42}+E_{43},\,E_{45}+E_{54}$$
(where $E_{ij}$ denotes a matrix unit). These matrices are linearly independent, so the $B_*$'s are zero, and Lemma~\ref{lemclone} implies that all the $\lambda$'s are zero as well.
\end{proof}

\section{A counterexample to Comon's conjecture}

Let $T$ be a symmetric tensor with indexing set $I$. The \textit{clone} of $T$ is the tensor $\mathcal{T}$ obtained from $T$ by taking $\sigma=100$ copies of every element in the indexing set. In particular, the indexing set of $\mathcal{T}$ is $\I=I\times\Sigma$, where $\Sigma=\{1,\ldots,\sigma\}$, and we have $\mathcal{T}(i_1,i_2|j_1,j_2|k_1,k_2)=T(i_1|j_1|k_1)$ for all $i_1,j_1,k_1\in I$ and $i_2,j_2,k_2\in\Sigma$. Also, we will write $\I_{a_1,\ldots,a_n}$ for $\{a_1,\ldots,a_n\}\times\Sigma$. Clearly, taking a clone of a tensor does not change its rank or symmetric rank. We define the $5\times5\times5$ tensor
$$A=\left(\begingroup
\renewcommand*{\arraystretch}{0.95}
\begingroup
\renewcommand*{\arraycolsep}{3pt}
\begin{array}{ccccc}
0&0&0&0&1\\
0&0&0&0&1\\
0&0&0&0&1\\
0&0&0&1&0\\
1&1&1&0&0
\end{array}
\endgroup\endgroup
\left|
\begingroup
\renewcommand*{\arraystretch}{0.95}
\begingroup
\renewcommand*{\arraycolsep}{3pt}
\begin{array}{ccccc}
0&0&0&0&1\\
0&0&0&0&1\\
0&0&0&0&1\\
0&0&0&2&0\\
1&1&1&0&0
\end{array}
\endgroup\endgroup
\left|
\begingroup
\renewcommand*{\arraystretch}{0.95}
\begingroup
\renewcommand*{\arraycolsep}{3pt}
\begin{array}{ccccc}
0&0&0&0&1\\
0&0&0&0&1\\
0&0&0&0&1\\
0&0&0&0&0\\
1&1&1&0&0
\end{array}
\endgroup\endgroup
\left|
\begingroup
\renewcommand*{\arraystretch}{0.95}
\begingroup
\renewcommand*{\arraycolsep}{3pt}
\begin{array}{ccccc}
0&0&0&1&0\\
0&0&0&2&0\\
0&0&0&0&0\\
1&2&0&0&0\\
0&0&0&0&0
\end{array}\endgroup\endgroup
\left|
\begingroup
\renewcommand*{\arraystretch}{0.95}
\begingroup
\renewcommand*{\arraycolsep}{3pt}
\begin{array}{ccccc}
1&1&1&0&0\\
1&1&1&0&0\\
1&1&1&0&0\\
0&0&0&0&0\\
0&0&0&0&0
\end{array}
\endgroup\endgroup
\right)\right.\right.\right.\right.$$
and denote by $\mathcal{A}$ the \textit{clone} of $A$.

We define $\S$ as the tensor obtained by symmetrically adjoining all the matrices in $\B$ to $\A$. Since the tensor $\mathcal{A}$ and the adjoined matrices are symmetric, so is $\S$. The size of $\S$ is $800\times800\times800$, and our goal is to prove that the rank of $\S$ is strictly less than its symmetric rank. Namely, we are going to show that $\S$ can be decomposed into a sum of $903=3\dim\L+3$ simple tensors but not into a sum of $903$ symmetric simple tensors. We can already prove the first part of this statement.

\begin{prop}\label{thrupper}
We have $\rank\,\S\leqslant 903$.
\end{prop}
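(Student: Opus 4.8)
The plan is to use Lemma~\ref{lemcompl5} to reduce the inequality to a statement about $\E_\L(\A)$, and then to build the required low-rank tensor by hand.

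First I would verify that every matrix in $\B$ is symmetric and of rank one. This follows from the block descriptions together with $C^\top=C^{-1}$: for instance $\Lambda^i_{12}=\xi\xi^\top$, where $\xi\in\C^{500}$ vanishes off $\I_{12}$, equals $C^{-1}u_i$ on $\I_1$, and equals $C^{-1}v_i$ on $\I_2$ (so that the $\I_1\times\I_1$, $\I_1\times\I_2$, $\I_2\times\I_2$ blocks of $\xi\xi^\top$ are $U^i(1)$, $L^i(1)$, $V^i(1)$), and $\Lambda^i_{1234}=\xi\xi^\top$ with $\xi$ equal to $C^{-5}u_i$ on each of $\I_1,\I_2,\I_3$ and to $C^{-5}v_i$ on $\I_4$; the matrices $\U^i_*$ and $\V^i_*$ are restrictions of the corresponding $\Lambda^i_*$ to principal sub-blocks and are rank one as well. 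Since $\S$ is obtained from $\A$ by adjoining the rank-one matrix sets $\M_1=\M_2=\M_3=\B$, Lemma~\ref{lemcompl5} applies with $V_1=V_2=V_3=\L$, and $\dim\L=300$ by Lemma~\ref{lemrank}, so
$$\rank\S=3\dim\L+\min_{A'\in\E_\L(\A)}\rank A'=900+\min_{A'\in\E_\L(\A)}\rank A'.$$
Hence it suffices to exhibit a single tensor $A'\in\E_\L(\A)$ with $\rank A'\le 3$.

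To produce $A'$, observe that $A$ is the sum of two symmetric rank-three tensors,
$$A=\bigl(p\otimes p\otimes e_5+p\otimes e_5\otimes p+e_5\otimes p\otimes p\bigr)+\bigl(e_4\otimes e_4\otimes w+e_4\otimes w\otimes e_4+w\otimes e_4\otimes e_4\bigr),$$
where $p=e_1+e_2+e_3$ and $w=e_1+2e_2$, so that $\A$ is the sum of the clones of these, the first supported on index blocks built from $\I_1,\I_2,\I_3,\I_5$ and the second on blocks built from $\I_1,\I_2,\I_4$. The clone of the first summand is a copy of the $2\times2\times2$ $W$-state sitting in the two directions $p$ and $e_5$, and hence has rank $3$; the plan is to subtract an element of $\L$ from each $3$-slice of $\A$, then from each $2$-slice of the result, then from each $1$-slice, chosen so as to cancel the clone of the second summand and leave $A'$ of rank at most $3$ (it is this $W$-state that accounts for the ``$+3$'' in $903=900+3$). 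These subtractions are available precisely because of Observation~\ref{obspart}: the $100\times100$ all-ones matrix equals $\sum_i L^i$, so the pieces of the $\I_4$-regions that must be cleaned can be hit using the rectangles $L^i(a)$ together with the blocks $U^i(\cdot),V^i(\cdot)$ supplied by the matrices $\U^i_{45},\V^i_{1234},\Lambda^i_{45},\Lambda^i_{1234}$ of $\B$; the cyclic shifts $C^{\pm a}$ attached to the five index-pairs keep the corrections needed in the three modes from interfering. I would carry out the verification block by block over the partition $\I=\bigsqcup_a\I_a$, checking at each stage that the correction used to clean one region does not reintroduce mass into an already cleaned one, and finally record $A'$ as an explicit sum of three simple tensors, which need not be symmetric since we are bounding the ordinary rank.

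Everything through the reduction via Lemma~\ref{lemcompl5} is routine; the entire weight of the argument lies in the last step, namely checking that the slice-subtractions needed to eliminate the clone of the $(e_4,e_4,w)$-summand genuinely lie in $\L$ and that the three successive $\mathrm{mod}$-operations compose to leave a rank-$3$ tensor, i.e.\ that $\min\rank\E_\L(\A)\le 3$. This is exactly where the extremal property of Observation~\ref{obspart} --- no union of $k$ of the twenty pieces with $2\le k\le 19$ is a rectangle --- is used, and it is the source of the combinatorial bookkeeping the author warns about.
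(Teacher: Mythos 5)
Your reduction via Lemma~\ref{lemcompl5} is correct: the matrices in $\B$ are symmetric of rank one, $\dim\L=300$ by Lemma~\ref{lemrank}, so $\rank\S=900+\min\rank\E_\L(\A)$, and the task becomes exhibiting a rank-$\le3$ tensor there. The decomposition of $A$ into the two symmetric pieces (the $W$-state on $p,e_5$ and the $e_4$-tensor with $w=e_1+2e_2$) is also correct and a useful observation. But the plan built on it cannot work, and the reason is instructive: the target you propose, the clone of the $W$-state, is a \emph{symmetric} rank-$3$ tensor, and the paper's own Lemma~\ref{thrlowereua} says precisely that no rank-$\le3$ tensor $E\in\E_\L(\A)$ has all restrictions $E(\I_{ij5}|\I_{ij5}|\I_{ij5})$ symmetric; in particular no symmetric tensor of rank $\le3$ is in $\E_\L(\A)$. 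The entire point of the construction is this asymmetry gap: $\E_\L(\A)$ contains an \emph{asymmetric} rank-$3$ tensor but no symmetric one. So the cancellation you describe cannot land on the $W$-state, no matter how cleverly the $\L$-corrections are chosen. One can also see the obstruction concretely without Lemma~\ref{thrlowereua}: the $(4,4)$ entries of the first and second $3$-slices of $A$ are $1$ and $2$, and no matrix in $\mathcal{W}$ (or $\L$) has support on the diagonal $4$-block, so $\mathrm{mod}_3$ and $\mathrm{mod}_2$ leave these untouched, while $\mathrm{mod}_1$ couples them to the off-diagonal $(1,4),(2,4),(3,4)$ entries through the single generator $E_{14}+E_{24}+E_{34}+E_{41}+E_{42}+E_{43}$; the bookkeeping does not close up to the $W$-state.

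What the paper does instead is simply write down an explicit, deliberately asymmetric rank-$3$ tensor
$$\bigl(1,1,1,-\tfrac{1}{\sqrt2},0\bigr)^{\otimes2}\!\otimes(-1,1,-3,0,1)+(0,1,-1,0,1)\otimes\bigl(1,1,1,-1,0\bigr)^{\otimes2}+\bigl(1,1,1,\tfrac{\mathbb I}{\sqrt3},0\bigr)\otimes(1,-2,4,0,1)\otimes\bigl(1,1,1,\tfrac{\mathbb I}{\sqrt3},0\bigr)$$
and verifies by a finite computation that it lies in $\E_{\mathcal{W}}(A)$, where $\mathcal{W}$ is exactly the five-dimensional space you identify. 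Two further small corrections to your write-up: the relevant fact used to embed $\mathcal{W}$-clones into $\L$ is merely that $\sum_i L^i$ is the all-ones block (i.e., the twenty pieces \emph{partition} the square); the extremal property of Observation~\ref{obspart} (that no union of $2\le k\le19$ pieces is a rectangle) is not used here at all --- it enters only in the lower-bound analysis of the following sections. And the asymmetry of the exhibited rank-$3$ tensor is not an incidental convenience of ``ordinary rank vs.\ symmetric rank''; it is forced, and it is the source of the counterexample.
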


\begin{proof}
Let us denote by $\mathcal{W}$ the linear span of the matrices
$$E_{12}+E_{21},\,E_{13}+E_{31},\,E_{23}+E_{32},\,E_{14}+E_{24}+E_{34}+E_{41}+E_{42}+E_{43},\,E_{45}+E_{54}.$$
Since the clones of these are $\sum_{i=1}^{20}\left(\Lambda_*^i-\U_*^i-\V_*^i\right)$, the set of clones of matrices in $\mathcal{W}$ is a subspace of $\mathcal{L}$.

According to Lemma~\ref{lemcompl}, we need to check that $\E_\mathcal{L}(\mathcal{A})$ contains a tensor of rank at most three, so it suffices to provide a rank three tensor in $\E_\mathcal{W}(A)$. It remains to check (our Mathematica file~\cite{filecheck} can be used) that the tensor
$$\left(
\begingroup
\renewcommand*{\arraycolsep}{0pt}
\begin{array}{c}
1\\
1\\
1\\
\frac{-1}{\sqrt{2}}\\
0
\end{array}
\endgroup
\right)\otimes
\left(
\begingroup
\renewcommand*{\arraycolsep}{0pt}
\begin{array}{c}
1\\
1\\
1\\
\frac{-1}{\sqrt{2}}\\
0
\end{array}
\endgroup\right)\otimes
\left(
\begingroup
\renewcommand*{\arraycolsep}{0pt}
\begin{array}{c}
-1\\
1\\
-3\\
0\\
1
\end{array}
\endgroup\right)+
\left(
\begingroup
\renewcommand*{\arraycolsep}{0pt}
\begin{array}{c}
0\\
1\\
-1\\
0\\
1
\end{array}
\endgroup\right)\otimes
\left(
\begingroup
\renewcommand*{\arraycolsep}{0pt}
\begin{array}{c}
1\\
1\\
1\\
-1\\
0
\end{array}
\endgroup\right)\otimes
\left(
\begingroup
\renewcommand*{\arraycolsep}{0pt}
\begin{array}{c}
1\\
1\\
1\\
-1\\
0
\end{array}
\endgroup\right)+
\left(
\begingroup
\renewcommand*{\arraycolsep}{0pt}
\begin{array}{c}
1\\
1\\
1\\
\frac{\mathbb{I}}{\sqrt{3}}\\
0
\end{array}
\endgroup\right)\otimes
\left(
\begingroup
\renewcommand*{\arraycolsep}{0pt}
\begin{array}{c}
1\\
-2\\
4\\
0\\
1
\end{array}
\endgroup\right)\otimes
\left(
\begingroup
\renewcommand*{\arraycolsep}{0pt}
\begin{array}{c}
1\\
1\\
1\\
\frac{\mathbb{I}}{\sqrt{3}}\\
0
\end{array}
\endgroup\right),$$
where $\mathbb{I}$ is the imaginary unit,
does in fact belong to $\E_\mathcal{W}(A)$.
\end{proof}

\section{Symmetric tensors in $\E_\L(\A)$}

The goal of this section is to prove a technical statement which would finalize our argument if we were able to prove Conjecture~\ref{conwar}. In particular, we will see that every symmetric tensor in $\E_\L(\A)$ has rank at least four.

We say that indexes $c_1,c_2\in\{1,\ldots,\sigma\}$ are \textit{far apart} if $c_1-c_2$ belongs to $\{30,31,\ldots,70\}$ modulo $\sigma=100$. This condition guarantees that the $(j,c_1|j,c_2)$ entry is zero for any $j\in\{1,2,3,4,5\}$ and any matrix in $\mathcal{L}$.

\begin{lem}\label{lemccl0}
Let $E\in\E_\L(\A)$. If $j\in\{1,2,3\}$, indexes $a,b$ are far apart, indexes $c_1,c_2,c_3$ are pairwise far apart, and indexes $p,q$ are arbitrary, then

\noindent (1) $E(j,a|j,b|5,c_1)=E(j,a|5,c_1|j,b)=E(5,c_1|j,a|j,b)=1,$

\noindent (2) $E(j,c_1|5,a|5,b)=E(5,a|j,c_1|5,b)=E(5,a|5,b|j,c_1)=0,$

\noindent (3) $E(j,c_1|j,c_2|j,c_3)=E(5,c_1|5,c_2|5,c_3)=0$,

\noindent (4) $E(1,1|4,33|4,66)$, $E(2,1|4,33|4,66)$, $E(3,1|4,33|4,66)$ are pairwise different,

\noindent (5) if $i,j\in\{1,2,3\}$ are different, then $E(5,c_1|i,p|j,q)=E(5,c_1|j,q|i,p)$.
\end{lem}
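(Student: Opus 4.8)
The plan is to unwind the definition of $\E_\L(\A)$ and reduce every assertion to a computation on the original $5\times5\times5$ tensor $A$, using the key property that the matrices in $\B$ (hence in $\L$) have very restricted support. Recall that $E\in\E_\L(\A)$ means $E$ is obtained from the clone $\A$ by subtracting, along each of the three directions, a matrix from $\L$. Since $\L$ is spanned by $\B=\B_{12}\cup\B_{13}\cup\B_{23}\cup\B_{45}\cup\B_{1234}$, and each $b\in\B$ is supported on a principal submatrix with row/column indices in one of $\I_{12},\I_{13},\I_{23},\I_{45},\I_{1234}$, a matrix in $\L$ can only have nonzero $(\mu,\nu)$-entry when $\mu,\nu$ lie in a common index pair; in particular its $(j,c_1|j,c_2)$-entry vanishes whenever $c_1,c_2$ are far apart (the within-$I^j$ blocks of the $\B_*$ matrices come from $U^i(a)=C^{-a}u_iu_i^\top C^a$, $V^i(a)$ shifted by a single unit, so their support misses differences in $\{30,\dots,70\}$). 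This is the single structural fact I will invoke repeatedly, and establishing it cleanly is the main obstacle — everything else is bookkeeping.

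First I would fix notation: write $E(\mu|\nu|\rho)=\A(\mu|\nu|\rho)-X_1(\nu|\rho)-X_2(\mu|\rho)-X_3(\mu|\nu)$ with $X_1,X_2,X_3\in\L$, where the subscript records which direction the modification acts in (I will be careful that $X_\chi$ depends only on the two indices transverse to direction $\chi$). For part (1): the entry $\A(j,a|j,b|5,c_1)$ equals $A(j|j|5)=1$ by the third $3$-slice of $A$ (rows $1,2,3$, columns $1,2,3$ are all $1$). The correction terms are $X_1(j,b|5,c_1)$, $X_2(j,a|5,c_1)$, $X_3(j,a|j,b)$. For the first two, the index pair involves $j\in\{1,2,3\}$ and $5$, which never lie in a common block of any $\B_*$ (the pairs $12,13,23,45,1234$ never contain $5$ together with a smaller index except $45$, and $j\le 3$), so those corrections vanish; for the third, $X_3(j,a|j,b)$ vanishes because $a,b$ are far apart. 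Hence $E(j,a|j,b|5,c_1)=1$, and the other two orderings follow the same way since $A$ is symmetric and $\L$ is symmetric. Parts (2) and (3) are identical in spirit: $A(j|5|5)=0$ and $A(j|j|j)=A(5|5|5)=0$ read off from $A$, and all three correction terms vanish — either because the index pair mixes a $\{1,2,3\}$-index with a $5$-index (parts 2, and the $5,5,5$ half of part 3), or because two indices $c_s,c_t$ are far apart (part 3).

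For part (4) the situation is slightly different because the indices $4,33$ and $4,66$ do interact through $\B_{45}$ and $\B_{1234}$ — but $33$ and $66$ are far apart ($66-33=33\in\{30,\dots,70\}$), so $X_1(4,33|4,66)$ still vanishes, and so do $X_2(j,1|4,66)$ and $X_3(j,1|4,33)$ since a $\{1,2,3\}$-index never shares a block with a $4$-index except via $\B_{1234}$, and in $\B_{1234}$ the relevant cross blocks $U^i(5),L^i(5)$ are constant along rows $1,2,3$, hence give the same contribution for $j=1,2,3$; so $E(j,1|4,33|4,66)=A(j|4|4)+(\text{a quantity independent of }j)$. The values $A(1|4|4),A(2|4|4),A(3|4|4)$ are $1,2,0$ (the fourth $3$-slice: column $4$ entries of rows $1,2,3$ are $1,2,0$), which are pairwise distinct, giving (4). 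Finally, part (5) compares $E(5,c_1|i,p|j,q)$ with $E(5,c_1|j,q|i,p)$: the $\A$-values agree by symmetry of $A$; the $X_1$-correction is $X_1(i,p|j,q)$ versus $X_1(j,q|i,p)$, equal since matrices in $\L$ are symmetric; and the remaining corrections are $X_2(5,c_1|j,q)+X_3(5,c_1|i,p)$ versus $X_2(5,c_1|i,p)+X_3(5,c_1|j,q)$ — but every entry of the form $(\text{matrix in }\L)(5,c|k,r)$ with $k\in\{1,2,3\}$ vanishes (no common block), so both sides are $0$. Throughout, the only genuinely delicate point is verifying the "no common block / far-apart kills the entry" claim with the precise shift parameters $a\in\{1,2,3,4,5\}$ used in the definitions of $\U^i_*,\V^i_*,\Lambda^i_*$; once that is nailed down, each of (1)–(5) is a two-line check against the explicit slices of $A$, which I would relegate to the Mathematica file \cite{filecheck}.
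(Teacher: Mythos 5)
Your proof is correct and takes essentially the same approach as the paper: write $E$ as $\A$ minus an $\L$-correction in each direction, then check that these corrections vanish on the given index patterns (by the ``far apart'' condition, or because the indices never lie in a common $\I_*$ block), except in part (4) where the $\B_{1234}$ cross block $L^i(5)$ is merely independent of $j\in\{1,2,3\}$, so the differences pass to $A(j|4|4)$. The only hiccups are cosmetic --- in part (4) you first say $X_2(j,1|4,66)$ and $X_3(j,1|4,33)$ ``vanish'' before correctly pivoting to the claim that they are only $j$-independent, and in part (1) you cite the ``third'' $3$-slice of $A$ when you mean the fifth --- but the underlying argument matches the paper's.
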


\begin{proof}
Note that $L(j,a|j,b)=L(5,c_1|j,b)=L(j,a|5,c_1)=L(5,c_1|5,c_2)=0$ for any $L\in\mathcal{L}$. Therefore, the entries as in (1)--(3) are equal to the corresponding entries of $\A$.

Further, we check that $L(4,33|4,66)=0$ and $L(1,1|4,t)=L(2,1|4,t)=L(3,1|4,t)$, and the item (4) follows because $\A(1,1|4,33|4,66)$, $\A(2,1|4,33|4,66)$, $\A(3,1|4,33|4,66)$ are pairwise different.

Finally, we note that $L(5,c_1|i,a)=L(5,c_1|j,b)=0$ and $L(i,a|j,b)=L(j,b|i,a)$, for any $L$ in $\L$, and we derive (5) from this similarly to the above statements.
\end{proof}

\begin{lem}\label{lemccl}
Let $E\in\E_\L(\A)$. Then

\noindent (1) for any set $Q\subset\{1,2,3\}\times\Sigma$ of cardinality sixty, the tensor $E(\I\setminus Q|\I\setminus Q|\I\setminus Q)$ has rank at least three.

If $\rank E\leqslant3$, then

\noindent (2) for any $\chi\in\{1,2,3\}$, $q\in\{1,2,3,5\}$, $a,b\in\Sigma$, the $\chi$-slices with indexes $(q,a)$ and $(q,b)$ coincide. In particular, $E(\I_{1235}|\I_{1235}|\I_{1235})$ is the clone of a tensor $F$;

\noindent (3) we also have, for $i,j\in\{1,2,3\}$,

\noindent (3.1) $F(j|j|j)=F(j|5|5)=F(5|j|5)=F(5|5|j)=F(5|5|5)=0$,

\noindent (3.2) $F(j|j|5)=F(j|5|j)=F(5|j|j)=1$,

\noindent (3.3) $F(i|j|5)$, $F(i|5|j)$, $F(5|i|j)$ are $-1$ or $1$,

\noindent (3.4) the $5$th $1$-, $2$-, $3$-slices of $F$ are rank-one.
\end{lem}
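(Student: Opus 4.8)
The plan is to deduce each assertion from the defining properties of $\E_\L(\A)$ together with the combinatorial structure of $A$, using the ``far apart'' indices of Lemma~\ref{lemccl0} as a supply of coordinates on which every matrix of $\L$ vanishes, so that the cloned tensor $\A$ looks locally like $A$.

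\textbf{Proof sketch.} For (1), fix a set $Q$ of sixty indices inside $\{1,2,3\}\times\Sigma$; since $|Q|=60<80$ and the three residues are far apart in a wide enough spread, I can pick indices $a,b\in\{1,2,3\}\times\Sigma$, $c_1,c_2,c_3\in\{5\}\times\Sigma$ outside $Q$ which realize the configurations of Lemma~\ref{lemccl0}(1)--(4) (this is where I check that $\Sigma=100$ is large enough that avoiding sixty forbidden values still leaves room for three mutually far-apart coordinates). Restricting $E$ to these few indices, part (1) of Lemma~\ref{lemccl0} forces the restricted tensor to contain, as a subtensor, the relevant $3\times\cdots$ block of $A$ whose $3$-slices are visibly not spanned by two rank-one matrices: the entries $E(i,1|4,33|4,66)$ for $i=1,2,3$ being pairwise distinct, combined with the ones and zeros of items (1)--(3), gives three $3$-slices no two of which are proportional after the third is subtracted, hence rank $\ge 3$. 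For (2), assume $\rank E\le 3$ and apply Observation~\ref{lemeqsl}: I must show that for $q\in\{1,2,3,5\}$ the $\chi$-slices of $E$ at $(q,a)$ and $(q,b)$ are equal for all $a,b\in\Sigma$. This holds on the coordinates indexed by $\{1,2,3,5\}\times\Sigma$ because $\A$ is a clone of $A$ there and because every matrix of $\L$ has equal entries across the $\Sigma$-copies of a fixed $\{1,2,3,5\}$-index (the blocks of $\U^i_*,\V^i_*,\Lambda^i_*$ repeat each nonzero row/column ten times, and only $q=4$ sees genuine variation); so on the $\{1,2,3,5\}$-part the two slices already coincide, and on the $4$-part I invoke Observation~\ref{lemeqsl} through part (1): if two such slices differed, a rank-$3$ decomposition of $E$ would induce a rank-$3$ decomposition of a restriction contradicting (1). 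This yields the clone structure $E(\I_{1235}|\I_{1235}|\I_{1235})=$ clone of $F$.

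\textbf{The values of $F$.} Once $F$ exists, items (3.1)--(3.4) are read off by evaluating the clone identity at far-apart representatives and using Lemma~\ref{lemccl0}. Concretely, $F(j|j|j)$, $F(5|5|5)$, $F(j|5|5)$ and its permutations equal the corresponding $E$-entries at pairwise-far-apart $\Sigma$-indices, which Lemma~\ref{lemccl0}(3)--(2) show to be $0$; $F(j|j|5)$ and its permutations equal $E(j,a|j,b|5,c)$ with $a,b,c$ far apart, which is $1$ by Lemma~\ref{lemccl0}(1); for (3.3), $F(i|j|5)$ with $i\ne j$ equals $E(i,a|j,b|5,c)$, and subtracting off known entries shows it is $\pm1$ --- here I use that the corresponding $2\times2$ minor of any rank-$\le3$ slice configuration, once the three ambient ones and zeros are fixed, can only take the two values making the slice family have rank $\le3$ (this is the one spot where the rank hypothesis does real work beyond Observation~\ref{lemeqsl}). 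Finally (3.4): the $5$th $1$-slice of $F$ is, by (3.1)--(3.2), the matrix supported on $\{1,2,3\}\times\{1,2,3\}$ with all entries $1$ (the $5$-row and $5$-column of that slice vanish by (3.1)), hence rank one; symmetrically for the $2$- and $3$-slices.

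\textbf{Main obstacle.} The delicate point is (3.3) together with the passage from ``$\rank E\le 3$'' to the clone structure (2): I need to be sure that the sixty excluded indices in (1) never obstruct finding far-apart witnesses, and that the rank-$3$ hypothesis really pins the free entries $F(i|j|5)$ down to $\{-1,1\}$ rather than an arbitrary scalar. This is a finite but fiddly case analysis of how three rank-one $5\times5$ matrices can sum to the partially-specified slices of $F$, and it is the step I expect to consume most of the work; everything else is bookkeeping with the vanishing pattern of $\L$ and repeated appeals to Observation~\ref{lemeqsl} and Lemma~\ref{lemccl0}.
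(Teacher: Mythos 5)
The central gap is in your argument for item (2). You assert that the slices of $E$ coincide across the $\Sigma$-copies on $\{1,2,3,5\}\times\Sigma$ essentially for free, because ``every matrix of $\L$ has equal entries across the $\Sigma$-copies of a fixed $\{1,2,3,5\}$-index'' and ``only $q=4$ sees genuine variation.'' Both claims are false. The blocks $U^i(a)=C^{-a}u_iu_i^\top C^a$, $V^i(a)$, $L^i(a)$ are cyclic shifts whose rows and columns genuinely vary with the $\Sigma$-coordinate, and this variation hits $q\in\{1,2,3\}$ through $\B_{12},\B_{13},\B_{23},\B_{1234}$ and $q=5$ through $\B_{45}$; if the matrices of $\L$ were clone-like the adjoining would be vacuous and the whole counterexample would collapse. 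Item (2) really does need $\rank E\le 3$: the paper's proof restricts to $E_1=E(I_1|J_1|K_1)$ with the adjoined $(5,c_i)$-slices, applies Lemma~\ref{lemcompl5} to conclude those slices are rank-one and collinear, and then propagates $\Sigma$-invariance via the relation $E(1,a|1,b|1,c)=x_{1,a}+y_{1,b}+z_{1,c}$ and the sliding of $c_1$ to $c_0=c_1+1$. No ``automatic'' route exists.

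The same misapprehension infects (3.4): by (3.3) the off-diagonal entries of the fifth slice of $F$ are $\pm1$, not $1$, and a symmetric matrix with $1$'s on the diagonal and $\pm1$'s off-diagonal is not automatically rank one. In the paper the logical order is reversed: rank-one-ness of that slice is established first (again via the adjoining argument and Lemma~\ref{lemcompl5}), and the $\pm1$'s are then forced by rank-one-ness plus symmetry. Your argument for (1) also diverges from the paper's — the paper adjoins $(5,c_0),(5,c_1),(5,c_2),(5,c_3)$-slices and reads off $\rank\ge\dim V_1+\dim V_2+\dim V_3\ge 3$ from Lemma~\ref{lemcompl5}, robustly after deleting $60$ indices from $\I_{123}$; you instead invoke item (4) of Lemma~\ref{lemccl0} at the index $(i,1)$, which may lie in $Q$, and the criterion ``three $3$-slices no two of which are proportional after the third is subtracted'' is not a valid rank-$\ge3$ test as stated. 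The recurring omission is the adjoining machinery (Lemma~\ref{lemcompl5}), which is the engine behind every nontrivial clause of this lemma.
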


\begin{proof}
Fix an arbitrary $c_1\in\{1,\ldots,\sigma\}$ and define $c_0=c_1+1$. We see that $\sigma=100$ is large enough so we can find $c_2,c_3$ such that the only pair of indexes in the sequence $(c_0,c_1,c_2,c_3)$ that are not far apart is $(c_0,c_1)$. We define 
$$I_1=\I_{123}\cup\{(5,c_0),(5,c_1)\},\,\,\,
J_1=\I_{123}\cup\{(5,c_2)\},\,\,\,
K_1=\I_{123}\cup\{(5,c_3)\}.$$
Consider the tensor $E_1=E(I_1|J_1|K_1)$. By Lemma~\ref{lemccl0} (items~2 and~3), the corresponding slices of $E_1$ with indexes $(5,c_i)$ are adjoined to $E(\I_{123}|\I_{123}|\I_{123})$. According to Lemma~\ref{lemcompl5}, the rank of $E_1$ would remain at least three even if we remove $60$ elements from the $\I_{123}$ part of its indexing set. This implies item (1).

Now we assume $\rank E\leqslant3$. We use Lemma~\ref{lemcompl5} again and conclude that the $1$-slices of $E_1$ with indexes $(5,c_0),(5,c_1)$ are rank-one and collinear. Using Lemma~\ref{lemccl0}(1), we conclude that $E(5,c_0|j,a|j,b)=E(5,c_1|j,a|j,b)=1$ holds for all $j\in\{1,2,3\}$ and all indexes $a,b$ that are far apart, and the conclusion of the previous sentence shows that the latter equalities should hold for all $a,b$. In particular, we see that the $1$-slices of $E_1$ with indexes $(5,c_0),(5,c_1)$ coincide.

The previous paragraph shows that the restriction of the $1$-slice of $E_1$ with index $(5,c_0)$ to $\I_{123}$ has the form
\begin{equation}\label{justmatr}
\left(\begin{array}{ccc}
U&?&?\\
?&U&?\\
?&?&U
\end{array}\right),
\end{equation}
where the $U$'s stand for the $100\times100$ blocks of ones. Since this matrix is rank-one, every of the six ``$?$'' blocks should have all entries equal; more than that, these entries should be $\pm1$ because Lemma~\ref{lemccl0}(5) tells us that the matrix~\eqref{justmatr} is symmetric.
A similar reasoning but applied for $2$- and $3$-slices instead of $1$-slices can show us that $$E(5,c_1|j,a|j,b)=E(j,a|5,c_1|j,b)=E(j,a|j,b|5,c_1)=1,$$ and, for $i,j\in\{1,2,3\}$, we get that the values $E(5,c_1|i,a|j,b)$, $E(i,a|5,c_1|j,b)$, $E(i,a|j,b|5,c_1)$ do not depend on $(a,b)$ and are equal to $-1$ or $1$.

Now we use Lemma~\ref{lemcompl5} and find numbers $x_{\alpha},y_{\beta},z_{\gamma}$ such that  $$E(\alpha|\beta|\gamma)=x_{\alpha}E(5,c_1|\beta|\gamma)+y_{\beta}E(\alpha|5,c_2|\gamma)+z_{\gamma}E(\alpha|\beta|5,c_3),$$ provided that $(\alpha,\beta,\gamma)\in I_1\times J_1\times K_1$. This means that $E(1,a|1,b|1,c)=x_{1,a}+y_{1,b}+z_{1,c}$, so by Lemma~\ref{lemccl0}(3) we have $x_{1,a}+y_{1,b}+z_{1,c}=0$ if $a,b,c$ are pairwise far apart. In particular, we get $x_{1,c_1}+y_{1,c_2}+z_{1,c_3}=0=x_{1,c_0}+y_{1,c_2}+z_{1,c_3},$ or $x_{1,c_0}=x_{1,c_1}$. Since $c_1$ was taken arbitrarily, and since $c_0=c_1-1$, this reasoning allows us to conclude that, for $j\in\{1,2,3\}$, the values $x_{j,a},y_{j,b},z_{j,c}$ do not depend on $a,b,c$.

So we have proved that, for $i\in\{1,2,3,5\}$ and $\chi\in\{1,2,3\}$, the difference of the $1$-slices (or $2$-slices, $3$-slices, respectively) of $E$ with indexes $(i,a)$, $(i,b)$ is zero when restricted to $J_1\times K_1$ (or to $I_1\times K_1$, $I_1\times J_1$, respectively). So if these slices were not equal, we would adjoin their difference to $E$ and get a tensor of rank at least $\rank(E_1)+1\geqslant 4$ by Lemma~\ref{lemcompl5}. However, adjoining a linear combination of existing slices cannot change the rank, so we get a contradiction.
\end{proof}

\begin{lem}\label{thrlowereua}
Let $E\in\E_\L(\A)$, $\rank E\leqslant 3$. 
Then there are distinct $i,j\in\{1,2,3\}$ for which the tensor $E(\I_{ij5}|\I_{ij5}|\I_{ij5})$ is not symmetric.
%
%
\end{lem}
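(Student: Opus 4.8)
The plan is to assume, for contradiction, that $E(\I_{ij5}|\I_{ij5}|\I_{ij5})$ is symmetric for every pair of distinct $i,j\in\{1,2,3\}$, and to contradict Lemma~\ref{lemccl0}(4). Since $\rank E\leqslant 3$, Lemma~\ref{lemccl} applies: $E(\I_{1235}|\I_{1235}|\I_{1235})$ is the clone of a tensor $F$ with index set $\{1,2,3,5\}$ satisfying (3.1)--(3.4), and our assumption says that each restriction $F(\{i,j,5\}|\{i,j,5\}|\{i,j,5\})$ is symmetric. The first task is to pin down $F$ exactly. By parts (3.3)--(3.4) of Lemma~\ref{lemccl}, the $5$th $1$-, $2$-, and $3$-slices of $F$, restricted to $\{1,2,3\}^2$, are rank-one matrices with unit diagonal and $\pm1$ off-diagonal entries, hence equal to $\mu\mu^\top$, $\nu\nu^\top$, $\epsilon\epsilon^\top$ for suitable sign vectors; symmetry of $F$ on each $\{i,j,5\}$ forces $\mu_i\mu_j=\nu_i\nu_j=\epsilon_i\epsilon_j$ for all $i\neq j$, so after the obvious normalisation $\mu=\nu=\epsilon=:\delta$ with $\delta\in\{\pm1\}^3$. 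Next, the identity $F(i|j|k)=x_iF(5|j|k)+y_jF(i|5|k)+z_kF(i|j|5)$ obtained in the proof of Lemma~\ref{lemccl}, together with $x_i+y_i+z_i=0$ (read it at $(i,i,i)$ and use (3.1)--(3.2)) and the symmetry of the entries with exactly two indices in $\{1,2,3\}$, forces $x_i=\hat x\delta_i$, $y_i=\hat y\delta_i$, $z_i=\hat z\delta_i$ with $\hat x+\hat y+\hat z=0$; substituting back gives $F(i|j|k)=0$ for all $i,j,k\in\{1,2,3\}$ and $F(i|j|5)=\delta_i\delta_j$ (and likewise for the other two placements of the index $5$). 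Combined with (3.1), this shows $F=w\otimes w\otimes z+w\otimes z\otimes w+z\otimes w\otimes w$, where $z$ is the standard basis vector at index $5$ and $w$ is the vector with $w_i=\delta_i$ for $i\in\{1,2,3\}$ and $w_5=0$. Moreover $\rank F=3$: if $F$ were a sum of two simple tensors, the span of its $1$-slices --- which is the two-dimensional space $\operatorname{span}(ww^\top,\,wz^\top+zw^\top)$ --- would be spanned by two rank-one matrices, but (writing a general element in the basis $w,z$) the only rank-one matrices in that space are the nonzero multiples of $ww^\top$.

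Now I transfer a rank decomposition $E=\sum_m a_m\otimes b_m\otimes c_m$ to $F$. By Lemma~\ref{lemccl}(2) the $1$-, $2$-, and $3$-slices of $E$ with indices $(q,a)$ and $(q,b)$ coincide for every $q\in\{1,2,3,5\}$, so by Observation~\ref{lemeqsl} the same holds for each summand; hence every nonzero $a_m$ (and every nonzero $b_m$, $c_m$) is constant on each block $\I_q$ with $q\in\{1,2,3,5\}$, so $a_m|_{\I_{1235}}$ is the clone of a vector $\alpha_m$ indexed by $\{1,2,3,5\}$, and similarly $b_m|_{\I_{1235}}$, $c_m|_{\I_{1235}}$ are clones of $\beta_m$, $\gamma_m$. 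Since cloning is injective, $F=\sum_m\alpha_m\otimes\beta_m\otimes\gamma_m$; because $F$ has rank three, $\rank E=3$ and all three summands are nonzero, with $(a_m)_{(i,c)}=(\alpha_m)_i$ for all $i\in\{1,2,3\}$ and all $c\in\Sigma$.

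The crux is to show $\alpha_m\in\operatorname{span}(w,z)$ for each $m$. The mode-$1$ fibres of $F$ span $\operatorname{span}(w,z)$, and from $F=\sum_m\alpha_m\otimes\beta_m\otimes\gamma_m$ these fibres lie in $\operatorname{span}(\alpha_1,\alpha_2,\alpha_3)$; hence $\operatorname{span}(\alpha_m)\supseteq\operatorname{span}(w,z)$, so its dimension is $2$ or $3$. It is not $3$: if $\alpha_1,\alpha_2,\alpha_3$ were linearly independent, then every $1$-slice $F(\alpha|\cdot|\cdot)=\sum_m(\alpha_m)_\alpha\,\beta_m\gamma_m^\top$ would be an arbitrary linear combination of the rank-one matrices $\beta_m\gamma_m^\top$, so $\operatorname{span}(\beta_m\gamma_m^\top)$ would equal the span of the $1$-slices of $F$, namely $\operatorname{span}(ww^\top,\,wz^\top+zw^\top)$; but each $\beta_m\gamma_m^\top$ has rank $\leqslant1$ and is nonzero (a vanishing one would make $\rank F\leqslant2$), hence is a multiple of $ww^\top$, so their span is one-dimensional --- a contradiction. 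Therefore $\operatorname{span}(\alpha_m)=\operatorname{span}(w,z)$, so $\alpha_m=\lambda_m w+\rho_m z$ for scalars $\lambda_m,\rho_m$, and since $w_i=\delta_i$ and $z_i=0$ for $i\in\{1,2,3\}$ we get $(\alpha_m)_i=\lambda_m\delta_i$.

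Finally, for $i\in\{1,2,3\}$ we have $E(i,1|4,33|4,66)=\sum_m(a_m)_{(i,1)}(b_m)_{(4,33)}(c_m)_{(4,66)}=\bigl(\sum_m\lambda_m(b_m)_{(4,33)}(c_m)_{(4,66)}\bigr)\delta_i$, so these three numbers all lie in $\{c,-c\}$ for a single scalar $c$ and cannot be pairwise distinct, contradicting Lemma~\ref{lemccl0}(4). This finishes the proof. I expect the main obstacle to be the exact determination of $F$ in the first paragraph: it is only bookkeeping, but it must correctly combine (3.1)--(3.4), the symmetry on \emph{all} of the sets $\{i,j,5\}$, and the auxiliary decomposition from the proof of Lemma~\ref{lemccl}, and the short rank-one-matrix argument that finishes the proof depends on having that precise form of $F$ in hand.
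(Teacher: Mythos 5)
The proof is correct, but it takes a genuinely different route from the paper's. The paper's argument is short and local: it uses only Lemma~\ref{lemccl}(3.3)--(3.4) to produce one distinct pair $i,j\in\{1,2,3\}$ with $F(i|j|5)=1$, then restricts $E$ to a specific $4\times4\times3$ block $E(I|J|K)$ containing the single extra indices $(4,33),(4,66)$, observes that symmetry of $E(\I_{ij5}|\I_{ij5}|\I_{ij5})$ together with Lemma~\ref{lemccl0}(4) would force this small tensor into a form whose slices are linearly independent while all rank-one matrices in their span lie in a hyperplane, giving rank $\geqslant4$. Your argument instead assumes symmetry simultaneously on all three pairs, pins $F$ down completely to $w\otimes w\otimes z+w\otimes z\otimes w+z\otimes w\otimes w$ (using the auxiliary $x_\alpha,y_\beta,z_\gamma$ identity established inside the proof of Lemma~\ref{lemccl}, not in its statement), shows $\rank F=3$, transfers the rank-three decomposition of $E$ to $F$ via Observation~\ref{lemeqsl}/Lemma~\ref{lemccl}(2), proves $\operatorname{span}(\alpha_1,\alpha_2,\alpha_3)=\operatorname{span}(w,z)$ from the rank-one-matrix structure of the slice span, and concludes that the three numbers in Lemma~\ref{lemccl0}(4) would lie in a two-element set $\{c,-c\}$. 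Both arguments are sound; the paper's is more economical and requires verifying only a fixed small rank fact, while yours gives more structural information (the explicit form of $F$ and the $\delta$-proportionality of the first-mode factors) at the cost of a longer computation and reliance on a quantity defined only inside an earlier proof. One small wording issue: what you call ``the symmetry of the entries with exactly two indices in $\{1,2,3\}$'' is really the symmetry of entries such as $F(i|j|j)$ with two \emph{distinct} values from $\{1,2,3\}$ among the three slots; the entries with a literal index $5$ are already forced by the analysis of the fifth slices, and it is the $F(i|j|j)$-type equalities that yield $x_i=\hat x\delta_i$, $y_i=\hat y\delta_i$, $z_i=\hat z\delta_i$.
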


\begin{proof}
We apply Lemma~\ref{lemccl} and conclude that $F(i|j|5)=1$ for some distinct $i,j\in\{1,2,3\}$. We set $I=\{(i,1), (j,1), (5,1), (4,33)\}$, $J=\{(i,1), (j,1), (5,1), (4,66)\}$, $K=\{(i,1), (j,1), (5,1)\}$. If $E(\I_{ij5}|\I_{ij5}|\I_{ij5})$ was a symmetric tensor, $E(I|J|K)$ would equal
$$\left(\begin{array}{cccc}
0&a&1&*\\
a&b&1&*\\
1&1&0&*\\
*&*&*&x
\end{array}
\left|\begin{array}{cccc}
a&b&1&*\\
b&0&1&*\\
1&1&0&*\\
*&*&*&y
\end{array}
\left|\begin{array}{cccc}
1&1&0&*\\
1&1&0&*\\
0&0&0&*\\
*&*&*&*
\end{array}\right.\right.\right),$$
where the rows are indexed by $I$, columns by $J$, slices by $K$, and the $*$'s stand for numbers we do not need to specify. According to Lemma~\ref{lemccl0}(4), we have $x\neq y$. One derives a contradiction by checking that such a tensor cannot have rank less than four. (In order to do this, we prove that the slices are linearly independent, but all rank-one matrices spanned by these slices lie in a codimension-one subspace.)
\end{proof}

\section{On symmetric rank decompositions of $\S$}

As said in Section~1, the inability to prove Conjecture~\ref{conwar} was a serious obstruction we faced in this study. In particular, this conjecture together with Lemma~\ref{thrlowereua} would already imply that $\S$ is a counterexample to Conjecture~\ref{concomon}. Now we change our strategy and work with the particular tensor $\S$ instead of the general case; the readers who can prove Conjecture~\ref{conwar} can skip the rest of this section. What we are going to do here is to assume that $\S$ admits a decomposition
\begin{equation}\label{dec0}
\S=\Psi_1+\ldots+\Psi_{3d+3}
\end{equation}
into a sum of $3d+3=3\dim\L+3=3|\B|+3=903$ symmetric simple tensors and to reach a contradiction from this assumption. In order to do this, we perform elementary transformations to get the initial decomposition~\eqref{dec0} into a simpler form, and we will see that the resulting (non-symmetric) decomposition will give us some important information about the initial one.

First, we perform the $1$-transformations on $(\Psi_1,\ldots,\Psi_{3d+3})$ and get
\begin{equation}\label{dec1}
\S=\sum\limits_{b\in\B}\Psi^1_b+\Psi^1_1+\ldots+\Psi^1_{2d+3}=\sum\limits_{b\in\B}\Psi^1_b+\S^1,
\end{equation}
where $\Psi^1_b$ is a tensor with $b$th $1$-slice equivalent (that is, equal up to adjoining zero columns and rows) to $b$, and any other tensor in the decomposition~\eqref{dec1} has zero $b$th $1$-slice. Similarly, the $2$-transformations on $(\Psi^1_1,\ldots,\Psi^1_{2d+3})$ lead us to
\begin{equation}\label{dec2}
\S^1=\sum\limits_{b\in\B}\Psi^2_b+\Psi^2_1+\ldots+\Psi^2_{d+3}=\sum\limits_{b\in\B}\Psi^2_b+\S^2,
\end{equation}
where $\Psi^2_b$ is a tensor with $b$th $2$-slice equivalent to $b$, and any other tensor in the decomposition~\eqref{dec2} has zero $b$th $2$-slice.
Finally, we perform the $3$-transformations on $(\Psi^2_1,\ldots,\Psi^2_{d+3})$ and get
\begin{equation}\label{dec3}
\S^2=\sum\limits_{b\in\B}\Psi^3_b+\Psi^3_1+\Psi^3_2+\Psi^3_3=\sum\limits_{b\in\B}\Psi^3_b+\S^3,
\end{equation}
where $\Psi^3_b$ is a tensor with $b$th $3$-slice equivalent to $b$, and any other tensor in the decomposition~\eqref{dec3} has zero $b$th $3$-slice. In particular, we get the new decomposition
\begin{equation}\label{dec55}
\S=\Psi^3_1+\Psi^3_2+\Psi^3_3+\sum\limits_{b\in\B}(\Psi^1_b+\Psi^2_b+\Psi^3_b)
\end{equation}
of $\S$ into a sum of $3d+3$ simple tensors. Our further strategy is to reduce~\eqref{dec0} into a decomposition similar to~\eqref{dec55} in which sufficiently many tensors $\Psi^1_b+\Psi^2_b+\Psi^3_b$ are symmetric, and then use Lemmas~\ref{lemccl} and~\ref{thrlowereua} to get a contradiction. Our first step is to analyse the $3$-slices of tensors in~\eqref{dec2}. 
In the following lemma and what follows, we may slightly abuse the notation and denote by $b\in\B$ a matrix that is equivalent to $b$ and has size clear from context. We recall that $\D_*$ is the set $\D_*^1\cup\ldots\cup\D_*^{20}$, where $\D_*^i$ consists of $\Lambda_*^i$, $\U^i_{*}$, $\V^i_*$, and all other rank-one matrices spanned by these.

\begin{lem}\label{lemr3}
Denote an (arbitrary) non-zero $3$-slice of the tensor $\Psi_t^3$ as above by $\psi_t^3$. Assume that, for some scalar $\lambda$'s, the matrix
$$\varphi=\sum\limits_{b\in\B}\lambda(b) b+\lambda_1\psi_1^3+\lambda_2\psi_2^3+\lambda_3\psi_3^3$$
is rank-one. Then either $\varphi\in\D_{12}$ or
$$\lambda\left(\Lambda_{12}^1\right)=-\lambda\left(\U_{12}^1\right)=-\lambda\left(\V_{12}^1\right)=\ldots=\lambda\left(\Lambda_{12}^{20}\right)=-\lambda\left(\U_{12}^{20}\right)=-\lambda\left(\V_{12}^{20}\right).$$
\end{lem}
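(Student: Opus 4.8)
The plan is to use the rank-one hypothesis on $\varphi$ by restricting it to a well-chosen family of small submatrices, on each of which the matrices $b\in\B\setminus\B_{12}$ and the $\psi^3_t$ contribute only a single block-constant correction; this reduces matters to a perturbed version of the situation treated in Lemma~\ref{lemdif}.

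First I would fix the structure of the tensors $\Psi^3_t$. One checks that an elementary $\chi$-transformation preserves the property that every tensor in a decomposition has vanishing $b$th $\chi'$-slice whenever $\chi'\neq\chi$ (it merely rescales the factor in the $\chi$th direction of each rescaled tensor, and the one new tensor it introduces has $\chi'$th-direction factor a linear combination of the old ones). Combined with the defining properties of the left-over tensors in~\eqref{dec1}--\eqref{dec3}, this shows that each $\Psi^3_t$ has zero $b$th $\chi$-slice for every $b\in\B$ and every $\chi\in\{1,2,3\}$; hence $\Psi^3_t$ is supported on $\I\times\I\times\I$. Furthermore $\S^3=\Psi^3_1+\Psi^3_2+\Psi^3_3$ lies in $\E_\L(\A)$, since passing from $\A=\S|_{\I\times\I\times\I}$ to $\S^3$ amounts to subtracting the tensors $\Psi^1_b|_{\I\times\I\times\I}$, $\Psi^2_b|_{\I\times\I\times\I}$, $\Psi^3_b|_{\I\times\I\times\I}$, which have all their $1$-, $2$-, $3$-slices (respectively) in $\L$; and $\rank\S^3=3$ by Lemma~\ref{lemccl}(1). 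Applying Lemma~\ref{lemccl}(2) to $\S^3$, together with Observation~\ref{lemeqsl} and the simplicity of the $\Psi^3_t$, we obtain that each $\psi^3_t$ vanishes outside $\I\times\I$ and is a scalar multiple of a matrix whose rows are constant on every set $\{q\}\times\Sigma$ and whose columns are constant on every set $\{q\}\times\Sigma$, for $q\in\{1,2,3,5\}$.

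Next I would localize. For $k\in\{1,\dots,10\}$ put $P_k=\{10k-1,\,10k\}\subset\Sigma$. The point of this choice is that for $a\in\{2,3,5\}$ the two indices of $P_k$ lie in one common cyclic interval $\{10m+1-a,\dots,10m+10-a\}$, so that $C^{-a}u_i$ and $C^{-a}v_i$ are constant on $P_k$ for every $i$; whereas for $a=1$ these two indices lie in distinct such intervals, straddling the boundary between block-rows $k$ and $k+1$ of the partition of Observation~\ref{obspart}. Consequently, for any $k,k'$, the restriction $\varphi_{k,k'}$ of $\varphi$ to the rows indexed by $\{1,2\}\times P_k$ and the columns indexed by $\{1,2\}\times P_{k'}$ is a rank-$\leqslant 1$ matrix of the form $\varphi_{k,k'}=B^{(k,k')}+N_{k,k'}$, where $B^{(k,k')}$ is the corresponding restriction of $B_{12}=\sum_{b\in\B_{12}}\lambda(b)b$ --- an explicit matrix in the coefficients $\lambda(\Lambda^i_{12})$, $\lambda(\U^i_{12})$, $\lambda(\V^i_{12})$ and the $0$--$1$ data recording which blocks of the partition meet block-rows $k,k+1$ and block-columns $k',k'+1$ --- and $N_{k,k'}$ is constant on each of the four blocks $(\{i\}\times P_k)\times(\{j\}\times P_{k'})$, $i,j\in\{1,2\}$, as it absorbs the restrictions of every $b\in\B\setminus\B_{12}$ (constant on these blocks by the choice of $P_k$) together with those of every $\lambda_t\psi^3_t$ (constant on them by the previous paragraph).

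Finally I would run the combinatorial argument of Lemma~\ref{lemdif} in this localized form. Imposing that every $\varphi_{k,k'}=B^{(k,k')}+N_{k,k'}$ has rank at most one and reading this against the rigidity of the partition (no union of between $2$ and $19$ of the $20$ blocks is a submatrix, Observation~\ref{obspart}) forces one of two outcomes. Either $\lambda(\Lambda^1_{12})=-\lambda(\U^1_{12})=-\lambda(\V^1_{12})=\dots=\lambda(\Lambda^{20}_{12})=-\lambda(\U^{20}_{12})=-\lambda(\V^{20}_{12})$, which is the second alternative of the lemma (equivalently, by Lemma~\ref{lemclone}, $B_{12}$ is a clone); or $B_{12}$ is rank-one and lies in a single $\D^i_{12}$, every correction $N_{k,k'}$ vanishes, and --- using that $\varphi$ has rank at most one globally and is supported on $\I\times\I$, so that the cancellations found on the $\{1,2\}\times\Sigma$ part propagate --- $\varphi=B_{12}\in\D_{12}$, the first alternative. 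The step I expect to be the main obstacle is exactly this last one: the case analysis that matches, for all $k,k'$ simultaneously, the admissible rank-one forms of $B^{(k,k')}+N_{k,k'}$ against the combinatorial structure of the partition. It is Lemma~\ref{lemdif} all over again, now with the added burden of the corrections $N_{k,k'}$ and of keeping the matrices of $\B\setminus\B_{12}$ under control.
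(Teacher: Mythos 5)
Your first paragraph is a sound and useful elaboration of a point the paper leaves implicit: you correctly argue that elementary $\chi$-transformations preserve zero $b$th $\chi'$-slices, so the $\Psi_t^3$ are supported on $\I\times\I\times\I$, that $\S^3|_\I\in\E_\L(\A)$ has rank exactly three, and that Lemma~\ref{lemccl}(2) plus Observation~\ref{lemeqsl} then force the slices of $\psi_t^3$ to be block-constant. The paper's proof invokes exactly these facts but without spelling them out.

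Where you diverge is in the main step, and this is where a real gap appears. The paper does \emph{not} localize to $4\times4$ submatrices and re-run the combinatorics of Lemma~\ref{lemdif} with block-constant corrections $N_{k,k'}$. Instead it exploits the rank-one hypothesis on $\varphi$ globally: picking a boundary-straddling pair of rows of $B_{12}$ that differ, it notes that the contributions of every $b\in\B\setminus\B_{12}$ and every $\psi_t^3$ cancel in that row difference, so the difference of the corresponding rows of $\varphi$ \emph{equals} the row difference of $B_{12}$. Since $\varphi$ has rank one, this non-zero difference fixes the common direction of all rows of $\varphi$, and repeating the argument for other boundary-straddling pairs (and for columns) shows that all pairwise row and column differences of $B_{12}$ are collinear. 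Lemma~\ref{lemdif} then finishes, as a black box, with no perturbation terms in sight. Your plan, by contrast, sets up a perturbed and localized version of Lemma~\ref{lemdif} and then declares the resulting case analysis ``the main obstacle'' without carrying it out; that step is precisely the content that has to be supplied, and without it the proposal is incomplete.

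There is also a concrete error in the localization itself. With $C^{-a}u_i=\sum_{j\in\operatorname{supp}(u_i)}e_{j+a}$, the support shifts by $+a$, so the shifted deciles are $\{10m+1+a,\dots,10m+10+a\}$, not $\{10m+1-a,\dots,10m+10-a\}$ as you wrote. Consequently a pair $\{p,p+1\}$ straddles a shifted boundary exactly when $p\equiv a\pmod{10}$; your $P_k=\{10k-1,10k\}$ has $p\equiv 9$, which straddles for no $a\in\{1,\dots,5\}$, so it fails to distinguish $\B_{12}$ from the rest. The correct choice is the pair $\{10t+1,10t+2\}$, which is exactly what the paper uses. (You also list $a\in\{2,3,5\}$ rather than $\{2,3,4,5\}$, but that is minor compared to the sign issue and the missing case analysis.)
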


\begin{proof}
By Lemma~\ref{lemclone}, the matrix $B_{12}=\sum_{b\in\B_{12}}\lambda(b) b$ can be a clone only if the displayed condition is satisfied. Otherwise, there is $j\in\{1,2\}$ and $t$ for which the rows of $B_{12}$ with indexes $(j,10t+1)$, $(j,10t+2)$ are different.
However, the corresponding rows of $b\notin\B_{12}$ are equal by the construction, and the corresponding rows of $\psi_i^3$ are equal because of Observation~\ref{lemeqsl} and Lemma~\ref{lemccl}(2).
Therefore, the difference between the rows of $\varphi$ with indexes $(j,10t+1)$, $(j,10t+2)$ equals the difference of the corresponding rows of $B_{12}$, and the rows of $\varphi$ are collinear to this difference. Similarly, the difference between any pair of rows of $B_{12}$ is collinear to the rows of $\varphi$, and we can also derive the same statement for columns instead of rows. The rest follows from Lemma~\ref{lemdif}.
\end{proof}

In what follows, we denote by $\Omega_{12}$ the set of all $b\in\mathcal{D}_{12}$ such that there is $t(b)\in\{1,\ldots,d+3\}$ for which the $3$-slices of $\Psi_{t(b)}^2$ are collinear to $b$.

\begin{lem}\label{lemdec2}
The cardinality of $\Omega_{12}$ is at least $\dim\L_{12}-1=59$.
\end{lem}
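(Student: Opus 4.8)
The plan is to compare the two rank decompositions $\S^2=\Psi^2_1+\ldots+\Psi^2_{d+3}$ of~\eqref{dec2} and $\S^2=\sum_{b\in\B}\Psi^3_b+\Psi^3_1+\Psi^3_2+\Psi^3_3$ of~\eqref{dec3} through their $3$-directions. First I would record two facts about $\S^2$. One is a rank count: because the distinguished slices of the tensors $\Psi^1_b$ and $\Psi^2_b$ are the matrices of $\B$, which are supported on the index set of $\A$, the remaining modes of these tensors are forced to be supported there too; feeding this into the chain of estimates coming from Lemma~\ref{lemcompl5} gives $\rank\S=3\dim\L+3$ and then $\rank\S^2=\dim\L+3=d+3$, so both displayed decompositions are genuine rank decompositions. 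The same support bookkeeping shows that the $m$th $3$-slice of $\S^2$ coincides with the matrix $m$ for every $m\in\B$. Writing $\Psi^2_j=\alpha_j\otimes\beta_j\otimes\gamma_j$ and $M_j:=\alpha_j\beta_j^\top$ for its rank-one $3$-direction, this yields $b=\sum_{j}(\gamma_j)_b\,M_j$ for each $b\in\B$, hence $\L\subseteq\operatorname{span}\{M_1,\ldots,M_{d+3}\}$; in particular $\L_{12}$ lies in this span.

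Next I would use that in any rank decomposition the $3$-directions are linearly independent: a nontrivial linear relation among them lets one absorb one summand into the others, reducing the number of terms. Applied to both rank decompositions of $\S^2$, together with the fact that an elementary $3$-transformation preserves the linear span of the $3$-directions, this shows that $\{M_1,\ldots,M_{d+3}\}$ and $\B\cup\{\psi^3_1,\psi^3_2,\psi^3_3\}$ are two bases of one and the same $(d+3)$-dimensional space $W$, where $\psi^3_t$ is the non-zero $3$-slice of $\Psi^3_t$ appearing in Lemma~\ref{lemr3}. Let $\Pi\colon W\to\L_{12}$ be the coordinate projection relative to the basis $\B\cup\{\psi^3_1,\psi^3_2,\psi^3_3\}$ that annihilates $\psi^3_1,\psi^3_2,\psi^3_3$ and all of $\B\setminus\B_{12}$. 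Since $\Pi$ restricts to the identity on $\L_{12}\subseteq W$, it is surjective, so $\L_{12}=\Pi(W)=\operatorname{span}\{\Pi(M_1),\ldots,\Pi(M_{d+3})\}$, a space of dimension $\dim\L_{12}=|\B_{12}|=60$ by Lemma~\ref{lemrank}.

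Then I would invoke Lemma~\ref{lemr3}: each $M_j$ is rank-one and lies in $W=\operatorname{span}(\B\cup\{\psi^3_1,\psi^3_2,\psi^3_3\})$, so expressing $M_j$ in this basis and applying that lemma gives, for every $j$, that either $M_j\in\D_{12}$ or the $\B_{12}$-coordinates of $M_j$ obey the clone relations of Lemma~\ref{lemclone}; in the second case $\Pi(M_j)$ is a clone, hence a scalar multiple of $c:=\sum_{i=1}^{20}\bigl(\Lambda^i_{12}-\U^i_{12}-\V^i_{12}\bigr)$, the unique (up to scaling) clone in $\L_{12}$. Set $Q=\{j:M_j\in\D_{12}\}$; since $\D_{12}\subseteq\L_{12}$ we get $\Pi(M_j)=M_j$ for $j\in Q$ and $\Pi(M_j)\in\C c$ for $j\notin Q$. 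Hence $\L_{12}=\operatorname{span}\{\Pi(M_1),\ldots,\Pi(M_{d+3})\}\subseteq\operatorname{span}\{M_j:j\in Q\}+\C c$, so $\operatorname{span}\{M_j:j\in Q\}$ has dimension at least $60-1=59$. Therefore at least $59$ of the matrices $M_j$ with $j\in Q$ are linearly independent, in particular pairwise distinct; each of them lies in $\D_{12}$ and is the common direction of the $3$-slices of $\Psi^2_j$, so each of them belongs to $\Omega_{12}$. This gives $|\Omega_{12}|\geq 59=\dim\L_{12}-1$.

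The step I expect to be the main obstacle is the first one: locating the supports of the modes of the tensors carrying the distinguished slices of $\B$, and using this both to pin down $\rank\S^2$ and to identify the $3$-slices of $\S^2$ at the indices of $\B$ with the matrices of $\B$ on the nose. Once that is done, the remainder is the clean linear-algebra argument above, whose only delicate point is that $\B\cup\{\psi^3_1,\psi^3_2,\psi^3_3\}$ must be known to be a basis of $W$ before the projection $\Pi$ can be combined with Lemma~\ref{lemr3}.
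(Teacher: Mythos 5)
Your proof is correct and takes essentially the same route as the paper: classify the rank-one $3$-directions $M_t$ via Lemma~\ref{lemr3} and then count dimensions. The paper counts in the full $303$-dimensional span (those $M_t$ satisfying the clone condition of Lemma~\ref{lemr3} live in a subspace of dimension at most $4\cdot60+3+1=244$, so the ones in $\D_{12}$ must span at least $303-244=59$ dimensions), while you pass through the coordinate projection $\Pi$ onto the $60$-dimensional space $\L_{12}$, collapsing the clone-type directions to the line $\C c$ and computing $60-1=59$; the two counts are equivalent. You do make explicit two facts the paper uses tacitly but which are genuinely needed for the subtraction to be legitimate, namely that $\rank\S^2=d+3$ and that the $3$-directions of a minimal rank decomposition are linearly independent, so that $\{M_t\}$ and $\B\cup\{\psi^3_1,\psi^3_2,\psi^3_3\}$ are honest bases of the same space.
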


\begin{proof}
Since~\eqref{dec2} can be obtained from~\eqref{dec3} by a sequence of elementary $3$-transformations, the $3$-slices of $\Psi_{t}^2$ can only be of the form $\varphi$ as in Lemma~\ref{lemr3}. This shows that the slices outside $\D_{12}$ can span a subspace of dimension at most $4\cdot 60+3+1=244$, but, according to Lemma~\ref{lemrank}, the rank of $\S^2$ cannot be less than $300+3$. Therefore, those $3$-slices of $\Psi_{t}^2$ that lie in $\D_{12}$ should span a subspace of dimension at least $303-244=59$.
\end{proof}

In Lemmas~\ref{lemr1111} and~\ref{lemr111}, we will consider the tensor obtained from $\S$ by the symmetrical adjoining of the matrices in $\Omega_{12}$. Since these matrices are linear combinations of the existing slices of $\S$, the rank decompositions~\eqref{dec0}--\eqref{dec55} can be uniquely extended to the decompositions of the new tensor. Slightly abusing the notation, we preserve the notation used in~\eqref{dec0}--\eqref{dec55} for the new tensor and corresponding decompositions.

\begin{lem}\label{lemr1111}
For any $b\in\Omega_{12}$, there are indexes $t_1(b), t_2(b)\in\{1,\ldots,2d+3\}$ such every tensor $\Psi_{\tau}^1$ has zero $b$th $2$- and $3$-slices except when $\tau\in\{t_1,t_2\}$. 
\end{lem}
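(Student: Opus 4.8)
The plan is to read off, from the elementary transformations of Section~1, the exact shape of the summands of~\eqref{dec1}, and thereby reduce the statement to a counting statement about the vectors of the original symmetric decomposition~\eqref{dec0}.

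First I would record the following structural fact: every summand of~\eqref{dec1} has its $2$- and $3$-coordinate vectors proportional. Writing each summand of~\eqref{dec0} as $\Psi_t=\xi_t\otimes\xi_t\otimes\xi_t$, one checks by induction that a single elementary $1$-transformation keeps the $2$- and $3$-coordinate of all but one summand, while the one new summand has its $1$-slices all proportional to the rank-one matrix $M_0$ being absorbed; as $M_0$ is a linear combination of the (inductively symmetric) $1$-slices of the current summands it is symmetric, and writing $M_0=g g^{\top}$ exhibits the new summand in the form $(\cdot)\otimes g\otimes g$. Hence every $\Psi^1_\tau$ may be written $a_\tau\otimes\xi_\tau\otimes\xi_\tau$. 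Consequently the $b$th $2$-slice and the $b$th $3$-slice of $\Psi^1_\tau$ are the \emph{same} matrix $(\xi_\tau)_b\,a_\tau\xi_\tau^{\top}$, so for each $\tau$ they vanish simultaneously; moreover for $b'\in\B$ the vector $\xi_{b'}$ is supported on the support of $b'$, which lies inside $\I$, so $(\xi_{b'})_b=0$ for $b\in\Omega_{12}$ and $\Psi^1_{b'}$ never contributes. (Here the coordinate of $\xi_t$ indexed by an adjoined matrix $b=\sum_{b'\in\B_{12}}\mu_{b'}b'$ is, by the way the decomposition is extended, $(\xi_t)_b=\sum_{b'}\mu_{b'}(\xi_t)_{b'}$.) Thus it remains to prove that, for fixed $b\in\Omega_{12}$, at most two integer-indexed $\tau$ satisfy $(\xi_\tau)_b\neq 0$. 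Summing the $b$th $1$-slices of the $\Psi_t$ gives $\sum_t(\xi_t)_b\,\xi_t\xi_t^{\top}=b$, and since $b\in\D_{12}$ is symmetric of rank one, say $b=\sigma\sigma^{\top}$, this is exactly the assertion that this sum of squares uses at most two non-zero coefficients.

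Next I would use the hypothesis $b\in\Omega_{12}$ to pin down one relevant index and constrain the rest. By definition there is $t(b)\le d+3$ with all $3$-slices of $\Psi^2_{t(b)}$ collinear to $b$; since the $2$-transformations producing~\eqref{dec2} fix the $1$- and $3$-coordinate of every surviving summand, $\Psi^2_{t(b)}$ has the $1$-coordinate $a_{\tau_0}$ and $3$-coordinate $\xi_{\tau_0}$ of some $\Psi^1_{\tau_0}$, and collinearity of its $3$-slices to $b=\sigma\sigma^{\top}$ forces the rank-one matrix $a_{\tau_0}y^{\top}$ (with $y$ the $2$-coordinate of $\Psi^2_{t(b)}$) to be a multiple of $\sigma\sigma^{\top}$, hence $a_{\tau_0}\propto\sigma$. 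So $\Psi^1_{\tau_0}$ is proportional to $\sigma\otimes\xi_{\tau_0}\otimes\xi_{\tau_0}$; I would take this $\tau_0$ as one of $t_1(b),t_2(b)$, and obtain the second index from the fact that the $\sigma$-direction of the non-zero matrix $b=\sum_\tau(\xi_\tau)_b\,a_\tau\xi_\tau^{\top}$ must be carried by a surviving summand.

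The hard part, which I expect to be the main obstacle, is ruling out a third summand. Suppose $(\xi_\tau)_b\neq0$ for $\tau$ ranging over a set $S$ with $|S|\ge3$, and consider $\sum_{\tau\in S}(\xi_\tau)_b\,a_\tau\xi_\tau^{\top}=\sigma\sigma^{\top}$. Each $a_\tau$ is supported on $\I$ (its coordinates at the indices of $\B\cup\Omega_{12}$ vanish, exactly as the $1$-slices of $\Psi^1_\tau$ do), whereas the $\xi_\tau$ may have mass on the adjoined indices; reading this identity column by column, the vector $\sum_{\tau\in S}(\xi_\tau)_b(\xi_\tau)_j\,a_\tau$ is a scalar multiple of $\sigma$ for every index $j$ and vanishes outside the support of $b$. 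I would analyse the ``adjoined part'' and the ``$\I_{12}$-part'' of these relations separately and appeal to the classification of rank-one matrices in $\L_{12}$ (Lemma~\ref{lemdif}) together with the combinatorial non-degeneracy of the partition (Observation~\ref{obspart}) and the explicit slices of $\A$: a third contribution would force a linear dependence among the $3$-slices of $\S^2$, contradicting Lemma~\ref{lemrank}. Upgrading this soft obstruction to the sharp bound $|S|\le2$ is where essentially all the combinatorial work lies; everything else is bookkeeping with the elementary transformations of Section~1.
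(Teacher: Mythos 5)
The structural observations in your first two paragraphs are essentially sound and overlap with what the paper uses: because the $1$-transformations taking \eqref{dec0} to \eqref{dec1} preserve symmetry of $1$-slices, every integer-indexed summand of \eqref{dec1} can be written $a_\tau\otimes\zeta_\tau\otimes\zeta_\tau$, the $b$th $2$- and $3$-slices of such a summand both equal $(\zeta_\tau)_b\,a_\tau\zeta_\tau^{\top}$ and hence vanish together, and the tensors $\Psi^1_{b'}$ for $b'\in\B$ contribute nothing since $\zeta_{b'}$ is supported in $\I$. Correctly reducing the lemma to the claim that at most two $\tau$ have $(\zeta_\tau)_b\neq 0$ is fine (though note you slide between the dec0 vectors $\xi_t$ and the dec1 vectors without flagging that they are different objects, and the assertion must be made for the dec1 decomposition).

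The genuine gap is exactly where you acknowledge it: you never prove the bound $|S|\leqslant 2$, and the sketch you offer for it does not hold up and does not match the mechanism that actually does the work. Your equation $\sum_\tau c_\tau a_\tau\zeta_\tau^{\top}=\sigma\sigma^{\top}$ is a single matrix equation; there is nothing in it per se that forces at most two nonzero coefficients (many rank-one matrices can be written as sums of three or more such products), so the hypothesis $b\in\Omega_{12}$ must enter in an essential way. Your sketch invokes Lemma~\ref{lemrank} as if it were a statement about linear independence of $3$-slices of $\S^2$, but Lemma~\ref{lemrank} only says $\dim\L=300$; the rank bound $\rank\S^2\geqslant303$ that follows from it is used in Lemma~\ref{lemdec2} to produce $\Omega_{12}$, not to constrain individual slices of the decomposition. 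The paper's actual proof works at the level of decomposition \eqref{dec2}, which you do not use: there, by definition of $\Omega_{12}$, only $\Psi^2_{t(b)}$ has a nonzero $b$th $3$-slice, and one analyses which rank-one matrices $\varphi=\sum_\beta\lambda(\beta)\beta+\sum_\tau\lambda_\tau\psi^2_\tau$ can be absorbed in the $2$-transformations carrying \eqref{dec2} back to \eqref{dec1}. Using that all $3$-slices of $\varphi$ are forced to be collinear to the row direction $u_b$, that $\lambda_{t(v)}=0$ for $v\in\Omega_{12}\setminus\{b\}$, that Lemma~\ref{lemr3} constrains the $\tau\notin t(\Omega_{12})$ terms, and finally Lemmas~\ref{lemcompl5} and~\ref{lemccl}(1) to show $\mu=\varphi-\lambda_{t(b)}\psi^2_{t(b)}$ must be a multiple of $b$, one concludes $\varphi\in\operatorname{span}(b,\psi^2_{t(b)})$, which is what delivers the ``at most two.'' Your plan bypasses dec2 and the crucial fact that $b\in\Omega_{12}$ isolates a \emph{single} contributing tensor there, and I do not see how to close the argument along your route without effectively rediscovering this intermediate-decomposition analysis.
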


\begin{proof}
By the definition of $\Omega_{12}$, the $3$-slices of $\Psi_{t(b)}^2$ are collinear to $b$, which implies that all the other tensors in the decomposition~\eqref{dec2} have zero $b$th $3$-slice. We denote by $u_b$ a non-zero vector collinear to the $3$-slices of the $2$-slices of $\Psi_{t(b)}^2$, that is, collinear to the rows of $b$.

We recall that~\eqref{dec1} can be obtained from~\eqref{dec2} by a sequence of elementary $2$-transformations, and we consider such a transformation involving $\Psi_{t(b)}^2$. In other words, we want to describe rank-one matrices of the form
$$\varphi=\sum\limits_{\beta\in\B}\lambda(\beta) \beta+\sum\limits_{\tau=1}^{d+3}\lambda_\tau\psi_\tau^2,$$
where $\psi_t^2$ denotes a non-zero $2$-slice of the tensor $\Psi_t^2$, the $\lambda$'s are complex numbers, and $\lambda_{t(b)}\neq0$. The above paragraph shows that the $b$th $3$-slice of $\varphi$ is a nonzero vector collinear to $u_b$, so every $3$-slice of $\varphi$ should be collinear to $u_b$.

Further, we need to have $\lambda_{t(v)}=0$ for any $v\in\Omega_{12}\setminus\{b\}$ because otherwise the $v$th $3$-slice of $\varphi$ is non-zero and collinear to $u_v$, which is in turn not collinear to $u_b$. If $\tau\notin t(\Omega_{12})$, then, according to Lemma~\ref{lemr3}, the matrices $\psi_\tau^2$ have $3$-slices collinear to vectors with equal numbers at the positions $(1, 10x+1)$ and $(1,10x+2)$. This is not the case for $u_b$, and since the $\beta$'s have zeros at the positions outside $\I$, the matrix $\mu=\varphi-\lambda_{t(b)}\psi_{t(b)}^2$ should have zero $3$-slices with indexes outside $\I$.

In other words, $\mu$ should be a matrix whose $3$-slices are collinear to $u_b$, and those entries of $\mu$ which have indexes outside $\I$ are zero. Since $\mu$ is a linear combination of the $2$-slices of tensors in rank decomposition, we can adjoin $\mu$ to $\S$ without changing its rank. However, if $\mu$ was not collinear to $b$, then $\mu$ would not be symmetric, and we would have $\mu\notin\L$, which would imply by Lemma~\ref{lemcompl5} that there is a tensor
$E\in\S\,\mathrm{mod}_3 \L\,\mathrm{mod}_1 \L\,\mathrm{mod}_2\mathrm{span}(\L,\mu)$ 
of rank at most $2$. However, the restriction of $E$ to the set $\I\setminus\operatorname{support}(u_b)$ coincides with such a restriction of a tensor in $\S\,\mathrm{mod}_3 \L\,\mathrm{mod}_1 \L\,\mathrm{mod}_2\L$, which, since $u_b$ has either $20$ or $60$ non-zero entries, has rank at least three by the item (1) of Lemma~\ref{lemccl}. The contradiction we have reached shows that $\mu$ is in fact a multiple of $b$.

In other words, $\varphi$ is a linear combination of $b$ and $\psi_{t(b)}^2$, so there can be (at most) two tensors $\Psi_{t_1(b)}^1$, $\Psi_{t_2(b)}^1$ in~\eqref{dec1} having non-zero $b$th $3$-slices. Since~\eqref{dec1} is obtained from a symmetric decomposition~\eqref{dec0} by elementary $1$-transformations, the $1$-slices of $\Psi_{\tau}^1$ should be symmetric, and we see that $\Psi_{t_1(b)}^1$, $\Psi_{t_2(b)}^1$ are still the only tensors in~\eqref{dec1} having non-zero $b$th $2$-slices. 
\end{proof}

\begin{lem}\label{lemr11111}
Assume $V\subset\C^n$ is a two-dimensional linear space containing a vector with non-zero first coordinate. Then the set of all rank-one symmetric matrices whose first row and column are in $V\setminus\{0\}$ spans a three-dimensional linear space.
\end{lem}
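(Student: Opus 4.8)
The plan is to set up coordinates so that the two-dimensional space $V\subset\C^n$ has a convenient basis, parametrise the rank-one symmetric matrices in question, and then read off the span directly. First I would choose a basis $\{e,f\}$ of $V$ with the first coordinate of $e$ equal to $1$ and the first coordinate of $f$ equal to $0$; this is possible by the hypothesis that $V$ contains a vector with non-zero first coordinate. A rank-one symmetric matrix has the form $vv^\top$ for some vector $v$, and its first row (equivalently first column, by symmetry) is the first coordinate of $v$ times $v^\top$. So the condition ``the first row and column lie in $V\setminus\{0\}$'' forces $v^\top$ itself to be a non-zero multiple of a vector in $V$ — indeed if the first coordinate of $v$ is non-zero then the first row is (first coordinate)$\cdot v^\top$, which lies in $V$ iff $v\in V$; and if the first coordinate of $v$ is zero, then the first row is the zero vector, which is excluded since we require it to be in $V\setminus\{0\}$. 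Hence the matrices under consideration are exactly the non-zero scalar multiples of $ww^\top$ for $w\in V\setminus\{0\}$.

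Next I would write $w=\alpha e+\beta f$ with $(\alpha,\beta)\neq(0,0)$ and expand
\[
ww^\top=\alpha^2\,ee^\top+\alpha\beta\,(ef^\top+fe^\top)+\beta^2\,ff^\top .
\]
Thus every matrix in our set lies in the span of the three fixed symmetric matrices $ee^\top$, $ef^\top+fe^\top$, $ff^\top$, so the span has dimension at most three. For the reverse inequality I would exhibit three choices of $w$ whose associated matrices are linearly independent: taking $w=e$ gives $ee^\top$; taking $w=f$ gives $ff^\top$; and taking $w=e+f$ gives $ee^\top+(ef^\top+fe^\top)+ff^\top$, from which $ef^\top+fe^\top$ is recovered. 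The independence of $\{ee^\top,\ ef^\top+fe^\top,\ ff^\top\}$ follows because $e$ and $f$ are linearly independent: a vanishing combination $a\,ee^\top+b(ef^\top+fe^\top)+c\,ff^\top=0$, applied to a vector $z$ with $f^\top z=0$, $e^\top z\neq 0$ (such $z$ exists since $e,f$ are independent), yields $a\,e+b\,f=0$, hence $a=b=0$, and then $c\,ff^\top=0$ gives $c=0$. Therefore the span is exactly three-dimensional.

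The only point needing a little care — and the closest thing to an obstacle — is the opening reduction: one must check that the hypothesis on the \emph{first row and column} really does pin $v$ down to $V$ (rather than merely its first row), and in particular that the degenerate case ``first coordinate of $v$ is zero'' is correctly excluded by the requirement that the first row be a \emph{non-zero} element of $V$. Once that is dispensed with, everything reduces to the elementary computation above. Note that the hypothesis that $V$ contains a vector with non-zero first coordinate is used precisely to guarantee that the basis vector $e$ with first coordinate $1$ exists, so that $ee^\top$ is genuinely among the matrices in our set and the three exhibited matrices are all attained.
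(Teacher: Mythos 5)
Your overall reduction and the upper bound are correct, and the approach (expand rank-one symmetric matrices in a fixed basis of $V$) is essentially the same as the paper's, which normalizes a basis $a,b$ of $V$ so that \emph{both} have first coordinate $1$, parametrizes the matrices by their first row $xa+yb$, and exhibits the identity $(x+y)M_{x,y}=(x^2-xy)M_{1,0}+(y^2-xy)M_{0,1}+2xyM_{1,1}$.

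However, there is a concrete slip in your lower bound. You correctly observe that a matrix $vv^\top$ in the set must have $v_1\neq 0$, since otherwise its first row is the zero vector. But then the sentence ``the matrices under consideration are exactly the non-zero scalar multiples of $ww^\top$ for $w\in V\setminus\{0\}$'' is too loose: it should read ``for $w\in V$ with $w_1\neq 0$.'' This matters one step later, because with your basis choice $f$ has first coordinate $0$, so $ff^\top$ has zero first row and is \emph{not} in the set; taking $w=f$ is therefore not allowed and does not exhibit an element of the set. The fix is easy: replace $w=f$ by, say, $w=e-f$ (or $w=e+2f$), which has first coordinate $1$. Then $ee^\top$, $(e+f)(e+f)^\top$, $(e-f)(e-f)^\top$ are in the set and span $ee^\top$, $ef^\top+fe^\top$, $ff^\top$, which is exactly your three-dimensional space. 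Note that the paper's normalization (both basis vectors with first coordinate $1$) avoids this issue automatically, since any $M_{x,y}$ with $x+y\neq 0$ — in particular $M_{1,0}$, $M_{0,1}$, $M_{1,1}$ — genuinely lies in the set.
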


\begin{proof}
Let $V$ be spanned by $a = (1, a_2,\ldots,a_n)$ and $b = (1, b_2,\ldots,b_n)$ and, if $x+y\neq0$, we denote by $M_{x,y}$ the rank-one matrix whose first row and column are equal to $xa + yb$. We have $M_{x,y}(i|j)=(xa_i + yb_i) (xa_j + yb_j)/(x+y)$, and it remains to check that $(x+y)M_{x,y}=(x^2-xy)M_{1,0}+(y^2-xy)M_{0,1}+2xyM_{1,1}$.
\end{proof}

\begin{lem}\label{lemr111}
For any $b\in\Omega_{12}$, there are $\tau_1, \tau_2, \tau_3\in\{1,\ldots,3d+3\}$ and simple tensors $\Phi_{b}^\chi$ with $b$th $\chi$-slices equivalent to $b$ such that $\Psi_{\tau_1}+\Psi_{\tau_2}+\Psi_{\tau_3}=\Phi_{b}^1+\Phi_{b}^2+\Phi_{b}^3$.
\end{lem}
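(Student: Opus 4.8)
The plan is to lift the structure that has already been established on the $1$-decomposition in Lemma~\ref{lemr1111} to the full (symmetric) decomposition~\eqref{dec0}, and then to pin down exactly three tensors whose sum realizes the adjoined slice $b$ in all three directions simultaneously. First I would fix $b\in\Omega_{12}$ and recall that, by Lemma~\ref{lemr1111}, there are indexes $t_1(b),t_2(b)\in\{1,\ldots,2d+3\}$ such that the only tensors in~\eqref{dec1} having non-zero $b$th $2$- or $3$-slice are $\Psi_{t_1(b)}^1$ and $\Psi_{t_2(b)}^1$. Since~\eqref{dec1} was obtained from the symmetric decomposition~\eqref{dec0} by elementary $1$-transformations, each $\Psi_\tau^1$ is a $\mathbb{C}$-linear combination of the $1$-slices of the symmetric simple tensors $\Psi_1,\ldots,\Psi_{3d+3}$; tracking which of the $\Psi_\tau$ contribute to the $b$th slices of $\Psi_{t_1(b)}^1$ and $\Psi_{t_2(b)}^1$, I would isolate a small set $S\subseteq\{1,\ldots,3d+3\}$ of indexes such that $\Psi_\tau$ has zero $b$th $1$-, $2$-, and $3$-slice for $\tau\notin S$. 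The symmetry of each $\Psi_\tau$ is crucial here: a symmetric simple tensor $a\otimes a\otimes a$ has a non-zero $b$th slice in one direction iff it has one in all three, so the same index set $S$ controls all three directions at once.

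Next I would bound $|S|$. The $b$th $1$-slices of the $\Psi_\tau$, $\tau\in S$, are symmetric rank-one matrices (being slices of symmetric simple tensors) whose non-zero rows and columns lie in a fixed low-dimensional space — namely, the space coming from the two slices $\Psi_{t_1(b)}^1$, $\Psi_{t_2(b)}^1$ that survive in~\eqref{dec1}. Restricting to a single ``representative'' row/column index (using that $b$, being in $\D_{12}$, is rank-one and hence determined up to scalars on a one-dimensional row space after passing to a support), this puts the first row/column of each such slice into a two-dimensional space $V$ containing a vector with non-zero first coordinate. By Lemma~\ref{lemr11111}, the symmetric rank-one matrices with first row and column in $V\setminus\{0\}$ span only a three-dimensional space; since the $b$th $1$-slices of the $\Psi_\tau$, together with $b$ itself, must span the slices of $\Psi_{t_1(b)}^1+\Psi_{t_2(b)}^1+(\text{the }b\text{th part of }\S^1)$, a dimension count forces $|S|\le 3$. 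Write $S=\{\tau_1,\tau_2,\tau_3\}$ (padding with indexes of zero tensors if $|S|<3$).

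Finally I would split each $\Psi_{\tau_i}$ into its ``$b$-part'' and ``$b$-complement''. Concretely, for each direction $\chi\in\{1,2,3\}$ the sum $\Psi_{\tau_1}+\Psi_{\tau_2}+\Psi_{\tau_3}$ has $b$th $\chi$-slice equal to some multiple of $b$ plus the contribution already accounted for in $\Psi_b^\chi$; performing an elementary $\chi$-transformation that extracts the rank-one matrix $b$ from these three tensors produces a simple tensor $\Phi_b^\chi$ whose $b$th $\chi$-slice is equivalent to $b$, together with a residual decomposition of $\Psi_{\tau_1}+\Psi_{\tau_2}+\Psi_{\tau_3}-\Phi_b^\chi$ into two simple tensors with zero $b$th $\chi$-slice. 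Doing this for $\chi=1,2,3$ in succession (and checking the three extractions are compatible, i.e. the residuals after one direction still have the requisite rank-one slices in the others — this is where one reuses that $b$ is rank-one and that $|S|=3$ leaves no room for interference) yields $\Psi_{\tau_1}+\Psi_{\tau_2}+\Psi_{\tau_3}=\Phi_b^1+\Phi_b^2+\Phi_b^3$, as desired.

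The main obstacle I anticipate is the dimension count bounding $|S|$: one must be careful that the ``first row/column'' reduction is legitimate for every $b\in\D_{12}$ (the rank-one matrices in $\D_{12}$ come in several shapes — the $\U$, $\V$, $\Lambda$ families and their combinations within each $\D_{12}^i$), and that after restricting to a support the hypotheses of Lemma~\ref{lemr11111} genuinely hold (the relevant two-dimensional space really does contain a vector with non-zero first coordinate, which should follow from $b$ itself being one of its non-zero elements). The compatibility of the three successive extractions is the other delicate point, but it is essentially bookkeeping once $|S|\le 3$ is in hand, since three simple tensors summing to something with a prescribed rank-one slice in each direction have essentially no freedom left.
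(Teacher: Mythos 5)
Your overall plan matches the paper's: Lemma~\ref{lemr1111} isolates the two indices $t_1(b),t_2(b)$, and Lemma~\ref{lemr11111} is then applied to their symmetric rank-one $1$-slices to get the bound of three. The containment $a_\tau\in V:=\operatorname{span}(v_{t_1},v_{t_2})$ whenever $(a_\tau)_b\neq 0$ (where $\Psi_\tau=a_\tau\otimes a_\tau\otimes a_\tau$ and $v_{t_i}v_{t_i}^\top$ is the $1$-slice direction of $\Psi_{t_i(b)}^1$) is correctly sensed: since the $1$-transformations preserve the span of the $1$-slice directions, and only $\Psi_{t_1}^1,\Psi_{t_2}^1$ have a non-zero $b$th row among those directions, the $b$th row $(a_\tau)_b a_\tau^\top$ of $a_\tau a_\tau^\top$ must lie in $V$.

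The gap is in how you pass from ``the relevant $a_\tau a_\tau^\top$ live in the three-dimensional space of Lemma~\ref{lemr11111}'' to ``$|S|\leq 3$''. Your sentence ``since the $b$th $1$-slices of the $\Psi_\tau$, together with $b$ itself, must span the slices of $\Psi_{t_1(b)}^1+\Psi_{t_2(b)}^1+(\text{the }b\text{th part of }\S^1)$, a dimension count forces $|S|\le 3$'' is not a valid argument --- there is no reason the $b$th $1$-slices need to \emph{span} anything of dimension three, and four or more rank-one matrices can happily sit inside a three-dimensional space. The step you are missing is that the $1$-slice directions $a_1a_1^\top,\ldots,a_{3d+3}a_{3d+3}^\top$ of~\eqref{dec0} are \emph{linearly independent}, which holds precisely because~\eqref{dec0} is a \emph{rank} decomposition: one must first note that $\operatorname{rank}\S=3d+3=903$ (which follows from Lemma~\ref{lemcompl5}, Lemma~\ref{lemrank}, and Lemma~\ref{lemccl}(1), giving the lower bound $900+3$, matched by Proposition~\ref{thrupper}), and in a decomposition of a tensor into exactly $\operatorname{rank}$ many simple tensors the rank-one matrices collinear to the $1$-slices must be independent, otherwise one could merge two summands. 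Once that independence is in hand, the three-dimensional bound from Lemma~\ref{lemr11111} immediately caps $|S|$ at three, and since for a symmetric simple $a_\tau\otimes a_\tau\otimes a_\tau$ the three $b$th slices are all equal to $(a_\tau)_b\,a_\tau a_\tau^\top$, the sum over $S$ reproduces all three $b$th slices of $\S$, from which the decomposition into $\Phi_b^1+\Phi_b^2+\Phi_b^3$ follows essentially as you describe. Also note that your preliminary characterisation of $S$ via ``tracking which $\Psi_\tau$ contribute to $\Psi_{t_1}^1,\Psi_{t_2}^1$'' is a red herring; $S$ is simply $\{\tau:(a_\tau)_b\neq 0\}$, and nothing about tracking coefficients is needed.
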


\begin{proof}
In the proof of Lemma~\ref{lemr1111} we noticed that the $1$-slices of the tensors $\Psi_{\tau}^1$ are symmetric, so we can apply Lemma~\ref{lemr11111} to the $1$-slices of tensors $\Psi_{t_1(b)}^1$, $\Psi_{t_2(b)}^1$ as in Lemma~\ref{lemr1111}. We see that (at most) three tensors among $\left(\Psi_{\tau}\right)$ can have a non-zero $(b|b|b)$ entry, and the sum of these tensors has the same $b$th slices as $\S$.
\end{proof}

The considerations of Lemmas~\ref{lemr3}--\ref{lemr111} can be repeated for $\B_{13}$ and $\B_{23}$ instead of $\B_{12}$, which leads us to the following.

\begin{lem}\label{lem18}
There are linearly independent subsets $\Omega_{12}\subset\mathcal{D}_{12}$, $\Omega_{13}\subset\mathcal{D}_{13}$, $\Omega_{23}\subset\mathcal{D}_{23}$ of cardinality $59$ each such that, for any $b\in\Omega_{12}\cup\Omega_{13}\cup\Omega_{23}$, there are $\tau_1, \tau_2, \tau_3\in\{1,\ldots,3d+3\}$ and simple tensors $\Phi_{b}^\chi$ with $\chi$-slices collinear to $b$ such that $\Psi_{\tau_1}+\Psi_{\tau_2}+\Psi_{\tau_3}=\Phi_{b}^1+\Phi_{b}^2+\Phi_{b}^3$.
\end{lem}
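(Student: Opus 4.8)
The plan is to observe that Lemma~\ref{lem18} is nothing more than the statement that the entire argument of Lemmas~\ref{lemr3}--\ref{lemr111} applies verbatim with the subscript $12$ replaced by $13$ or $23$, together with a bookkeeping check that the three resulting sets $\Omega_{12},\Omega_{13},\Omega_{23}$ are mutually compatible. So I would proceed in two stages: first, argue that nothing in the proofs of Lemmas~\ref{lemr3}--\ref{lemr111} used any property of $\B_{12}$ that is not equally shared by $\B_{13}$ and $\B_{23}$; second, record the cardinality and independence consequences.

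For the first stage I would go through the inputs to each lemma. Lemma~\ref{lemr3} relied only on: (i) Lemma~\ref{lemclone}, whose statement is symmetric in the choice of $*\in\{12,13,23,45,1234\}$; (ii) the combinatorial fact from Observation~\ref{obspart} that a non-clone $B_*$ has two distinct rows among a block of ten — again true for $13$ and $23$ by construction; and (iii) Lemma~\ref{lemccl}(2), which is a statement about $E$, not about $*$. Thus Lemma~\ref{lemr3} holds with $12$ replaced by $13$ or $23$, and consequently so does Lemma~\ref{lemdec2}: the counting $4\cdot 60+3+1=244<303=300+3$ is unchanged, giving $|\Omega_{13}|,|\Omega_{23}|\geqslant\dim\L_{13}-1=\dim\L_{23}-1=59$. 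The key point here is that the numerics $\dim\L_*=60$ for each of $*\in\{12,13,23\}$ are identical, so the same arithmetic gap of $59$ appears each time. Lemma~\ref{lemr1111} used Lemma~\ref{lemr3} (in the adapted form), the row-structure of $u_b$, and Lemma~\ref{lemccl}(1) applied to the $20$ or $60$ nonzero coordinates of $u_b$; none of these care which of $12,13,23$ we picked. Lemmas~\ref{lemr11111} and~\ref{lemr111} are already phrased without reference to $*$. Hence for each of the three choices we obtain a linearly independent set $\Omega_*\subset\D_*$ with $|\Omega_*|=59$ (we may trim down to exactly $59$) and the decomposition conclusion $\Psi_{\tau_1}+\Psi_{\tau_2}+\Psi_{\tau_3}=\Phi_b^1+\Phi_b^2+\Phi_b^3$ for every $b\in\Omega_*$.

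For the second stage I would simply note that $\Omega_{12}\subset\D_{12}\subset\L_{12}$, $\Omega_{13}\subset\L_{13}$, $\Omega_{23}\subset\L_{23}$, and by Lemma~\ref{lemrank} the spaces $\L_{12},\L_{13},\L_{23}$ are in direct sum inside $\L$; therefore the union $\Omega_{12}\cup\Omega_{13}\cup\Omega_{23}$ is automatically linearly independent and the per-$b$ statement, being local to each $b$, transfers to the union without interaction. This yields exactly the assertion of Lemma~\ref{lem18}.

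The main obstacle I anticipate is not conceptual but one of verification discipline: one must make sure that \emph{every} appeal to a concrete feature of the $12$-construction in the proofs of Lemmas~\ref{lemr1111} and~\ref{lemr111} — in particular the phrase ``vectors with equal numbers at the positions $(1,10x+1)$ and $(1,10x+2)$'' inside the proof of Lemma~\ref{lemr1111}, and the count ``either $20$ or $60$ non-zero entries'' of $u_b$ — has an exact analogue for the $13$- and $23$-blocks. Concretely, for $\B_{13}$ the relevant index pair must be taken inside the $\I_{13}$ part rather than $\I_{12}$, and the distinguishing coordinate positions shift accordingly; since the blocks $\U^i_{13},\V^i_{13},\Lambda^i_{13}$ are defined by the same template as the $12$-blocks (with $U^i(2),V^i(2),L^i(2)$ in place of $U^i(1),V^i(1),L^i(1)$), the same ``two distinct rows in a block of ten'' phenomenon from Observation~\ref{obspart} applies, and the support sizes $20$ and $60$ of the relevant row-vectors are unchanged because they come from the block structure of the $100\times100$ matrices $L^i$, not from which pair of indices in $\{1,2,3,4,5\}$ we singled out. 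Once this routine check is carried out for $13$ and $23$, the conclusion follows, and I would present it as such rather than reproducing the three nearly identical arguments in full.
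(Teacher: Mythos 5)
Your proposal matches the paper's own treatment exactly: the paper introduces Lemma~\ref{lem18} with the single remark that ``the considerations of Lemmas~\ref{lemr3}--\ref{lemr111} can be repeated for $\B_{13}$ and $\B_{23}$ instead of $\B_{12}$,'' and you have simply spelled out why that repetition is legitimate, together with the (correct) observation that $\L_{12}\oplus\L_{13}\oplus\L_{23}$ being a direct sum inside $\L$ (via Lemma~\ref{lemrank}) gives independence of the union. No gap; same approach as the paper, only more explicit.
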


Let $\beta_{12}\in\D_{12}$, $\beta_{13}\in\D_{13}$, $\beta_{23}\in\D_{23}$ be matrices such that $\Omega_{12}\cup\{\beta_{12}\}$ is a basis of $\L_{12}$, $\Omega_{13}\cup\{\beta_{13}\}$ is a basis of $\L_{13}$, $\Omega_{23}\cup\{\beta_{23}\}$ is a basis of $\L_{23}$. We write $\B'=\B_{45}\cup\B_{1234}\cup\{\beta_{12},\beta_{13},\beta_{23}\}$ and
$$\Phi=\sum\limits_{b\in\Omega_{12}\cup\Omega_{13}\cup\Omega_{23}}(\Phi_b^1+\Phi_b^2+\Phi_b^3).$$
Applying the $1$-, $2$-, $3$-transformations as those in the beginning of this section but with $\S-\Phi$, $\B'$ instead of $\S$, $\B$, we get a decomposition
\begin{equation}\label{dec00}
\S-\Phi=\Xi_1+\Xi_2+\Xi_3+\sum\limits_{b\in\B'}(\Xi_b^1+\Xi_b^2+\Xi_b^3),
\end{equation}
with $\Xi^\chi_b$ having $\chi$-slices collinear to $b$. Let $\N$ be the last summand of~\eqref{dec00}.

\begin{lem}\label{lem00}
If $i,j,k\in\{1,2,5\}$, then there are $c_1,c_2,c_3\in\Sigma$ such that $\N(\pi)=0$ for any permutation $\pi$ of the family $(i,c_1|j,c_2|k,c_3)$.
\end{lem}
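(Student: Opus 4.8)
The plan is to show that the entries $\N(i,c_1|j,c_2|k,c_3)$ are controlled by $\S-\Phi$ minus the contribution of the three leftover summands $\Xi_1,\Xi_2,\Xi_3$, and that for a judicious choice of $c_1,c_2,c_3$ this contribution vanishes. First I would record what $\N$ is: from~\eqref{dec00} we have $\N=(\S-\Phi)-\Xi_1-\Xi_2-\Xi_3$, so for any triple of indexes $(\alpha|\beta|\gamma)$,
$$\N(\alpha|\beta|\gamma)=(\S-\Phi)(\alpha|\beta|\gamma)-\Xi_1(\alpha|\beta|\gamma)-\Xi_2(\alpha|\beta|\gamma)-\Xi_3(\alpha|\beta|\gamma).$$
Next I would analyse the first term. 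The tensor $\Phi$ is a sum of simple tensors $\Phi_b^\chi$ whose $\chi$-slices are collinear to matrices $b\in\Omega_{12}\cup\Omega_{13}\cup\Omega_{23}\subset\L$; since every matrix in $\L$ is supported on $\I$ and, by the construction in Section~2 (and Lemma~\ref{lemccl0}), vanishes on all entries of the form $(i,c_1|j,c_2)$ with $i,j\in\{1,2,5\}$ and $c_1,c_2$ far apart (for $i,j\in\{1,2\}$ this uses that the blocks $\U_*,\V_*,\Lambda_*$ are supported within the relevant $100\times100$ principal sub-blocks and only at non-far-apart index pairs, and for a $5$-index it uses $L(5,c|\cdot)=0$ and $L(\cdot|5,c)=0$), the tensor $\Phi$ contributes zero to $\N(i,c_1|j,c_2|k,c_3)$ as long as the three indexes $c_1,c_2,c_3$ are chosen pairwise far apart. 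The remaining part of the first term, $\S(i,c_1|j,c_2|k,c_3)$, equals the corresponding entry of the clone $\A$ plus the contribution of the adjoined slices, and by Lemma~\ref{lemccl0}(2),(3) that entry is $0$ whenever $i,j,k\in\{1,2,5\}$ and the $c$'s are pairwise far apart.

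The substantive step is controlling the three leftover terms $\Xi_1,\Xi_2,\Xi_3$. These are simple (rank-one) tensors, so each $\Xi_s$ has the form $p^s\otimes q^s\otimes r^s$ for some vectors indexed by $\I$ (extended by zeros outside $\I$, since~\eqref{dec00} is a decomposition of a tensor supported within $\I$ together with the adjoined $\B'$-slices, and the $\B'$-slices have already been split off into $\N$; more precisely $\Xi_s$ is supported on $\I$ because the $\B'$-parts sit in $\N$). For a fixed pair $i,j\in\{1,2,5\}$ I would look at the restriction of each $\Xi_s$ to the index set $\I_i\times\I_j\times\I_k$: this is a simple tensor $\tilde p^s\otimes\tilde q^s\otimes\tilde r^s$ on a $100\times100\times100$ block. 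Since $\tilde p^s$ has $100$ coordinates but $c_1$ ranges over all of $\Sigma$, I want to find a single index $c_1$ with $\tilde p^1_{c_1}=\tilde p^2_{c_1}=\tilde p^3_{c_1}=0$, and similarly a $c_2$ killing all three $\tilde q^s$ and a $c_3$ killing all three $\tilde r^s$. If $\N(\pi)\neq0$ for some permutation $\pi$ of $(i,c_1|j,c_2|k,c_3)$ then, because $\S-\Phi$ and hence $\N$ has all such entries equal to $\Xi_1+\Xi_2+\Xi_3$ evaluated there, at least one $\Xi_s$ is non-zero on that entry, contradicting the vanishing just arranged — provided we can make all three choices simultaneously. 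The main obstacle is precisely that a rank-one tensor's factor vector need not have a zero coordinate; so I would instead argue by a counting/pigeonhole device: each $\tilde p^s$ can be non-zero on at most one "far-apart class" worth of coordinates in a way that matters, or, more robustly, I would use Lemma~\ref{lemccl}(1) — the restriction of any $E\in\E_\L(\A)$ to $\I\setminus Q$ (with $|Q|$ up to sixty) still has rank $\geq3$ — to conclude that the three vectors $\tilde p^1,\tilde p^2,\tilde p^3$ cannot have all of a $60$-element "far-apart transversal" of coordinates non-zero without forcing the rank of the relevant restricted tensor above $3$, which is impossible since $\S-\Phi$ restricted appropriately has the three leftover summands spanning only a $3$-dimensional slice space. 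Hence such $c_1,c_2,c_3$ exist.

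Concretely, the order of steps I would carry out is: (i) write $\N=(\S-\Phi)-\Xi_1-\Xi_2-\Xi_3$ with each $\Xi_s$ rank-one and supported on $\I$; (ii) observe $\Phi$ and $\S$ both vanish on entries $(i,c_1|j,c_2|k,c_3)$ with $i,j,k\in\{1,2,5\}$ and $c_1,c_2,c_3$ pairwise far apart, using Lemma~\ref{lemccl0}; (iii) reduce to finding $c_1,c_2,c_3$ pairwise far apart with $\Xi_1+\Xi_2+\Xi_3$ vanishing on the whole orbit of $(i,c_1|j,c_2|k,c_3)$ under permutation — note the orbit involves the three pairs $\{i,j\},\{j,k\},\{i,k\}$, so I must handle all three $100\times100$ "projections" of the $\Xi_s$; (iv) for each of the (at most three) relevant pairs, the factor vectors of $\Xi_1,\Xi_2,\Xi_3$ along the corresponding axis give at most $3$ "obstructed" coordinates up to collinearity, and since $\sigma=100$ admits a set of three pairwise far-apart indexes avoiding any prescribed small set of positions, a direct pigeonhole (or the Lemma~\ref{lemccl}(1) rank obstruction) yields the desired $c_1,c_2,c_3$. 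The genuinely delicate point, and the one I expect to require the most care, is coordinating the three choices so that \emph{every} permutation $\pi$ of the family is simultaneously killed; this is where I would lean on the "far apart" slack ($41$ admissible residues out of $100$) to absorb the constraints coming from all three axis-projections at once.
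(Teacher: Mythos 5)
Your plan rewrites $\N$ as $(\S-\Phi)-\Xi_1-\Xi_2-\Xi_3$ and tries to show that each of the four pieces separately vanishes at all permutations of $(i,c_1|j,c_2|k,c_3)$. That cannot work, because those pieces do \emph{not} vanish individually: their contributions cancel. For instance, with $c_1,c_2,c_3$ pairwise far apart, $\S(1,c_1|1,c_2|5,c_3)=\A(1,c_1|1,c_2|5,c_3)=A(1|1|5)=1$, and similarly $\S(1,c_1|2,c_2|5,c_3)=A(1|2|5)=1$; Lemma~\ref{lemccl0}(1) and Lemma~\ref{lemccl}(3.2)--(3.3) confirm these entries are nonzero even after reducing modulo $\L$. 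So your step (ii), which invokes Lemma~\ref{lemccl0}(2),(3) to kill $\S$, applies only to the special patterns $(j,j,j)$, $(j,5,5)$, $(5,5,5)$ and is false for the remaining patterns in $\{1,2,5\}$ such as $(1,1,5)$, $(1,2,5)$, $(1,2,2)$. Likewise, the claim that every $L\in\L$ vanishes at $(1,c_1|2,c_2)$ for far-apart $c_1,c_2$ is incorrect: the off-diagonal block of $\Lambda^t_{12}$ is $L^t(1)=C^{-1}u_t v_t^\top C$, and for $t\in\{11,\dots,20\}$ the supports of $u_t$ and $v_t$ are disjoint $10$-blocks, so the support rectangle of $L^t(1)$ can easily contain far-apart pairs (the ``far apart implies zero'' fact is only valid for the diagonal blocks $(j,\cdot|j,\cdot)$, as stated in the paper). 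Finally, the pigeonhole/rank-obstruction device in step (iv) for forcing $\Xi_1,\Xi_2,\Xi_3$ to vanish is not sound: a rank-one tensor with generic factor vectors has no zero entries, and Lemma~\ref{lemccl}(1) gives a lower bound on rank after deleting slices, not a zero-pattern for the three leftover summands.

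The argument the paper actually uses is much more direct and avoids $\Xi_1,\Xi_2,\Xi_3$ and $\Phi$ entirely: take $\N$ by its definition as $\sum_{b\in\B'}(\Xi_b^1+\Xi_b^2+\Xi_b^3)$, where every $\chi$-slice of $\Xi_b^\chi$ is collinear to $b$, so $\N(\pi)$ vanishes as soon as $b(i,c_1|j,c_2)=b(i,c_1|k,c_3)=b(j,c_2|k,c_3)=0$ for all $b\in\B'$. Because $\B'=\B_{45}\cup\B_{1234}\cup\{\beta_{12},\beta_{13},\beta_{23}\}$, the support restrictions are enough: matrices in $\B_{45}$ are supported in $\I_{45}$, those in $\B_{1234}$ in $\I_{1234}$, $\beta_{13}$ in $\I_{13}$, $\beta_{23}$ in $\I_{23}$, and their diagonal blocks vanish at far-apart pairs. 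The only genuine obstruction is $\beta_{12}$ at $(1,\cdot|2,\cdot)$, which is defused by choosing $c_1$ outside $C_1$ (resp.\ $c_6$ outside $C_2$), possible since $C_1,C_2\subsetneq\Sigma$ because $\beta_{12}\in\D_{12}$ has bounded support. Your proposal is missing this key idea and substitutes incorrect vanishing claims for it.
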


\begin{proof}
We note that $L(1,a|5,b)=L(2,a|5,b)=L(i,a|i,b)=0$ for all $L\in\L$ and all $a,b$ that are far apart. Therefore, we can take $(c_1,c_2,c_3)$ to be arbitrary indexes that are pairwise far apart and get the desired result immediately for $(i,j,k)$ equal to $(i,i,i)$, $(i,i,5)$, $(i,5,5)$ or any permutation of these.

Further, we denote by $C_1,C_2\subset\Sigma$ the sets for which $\{1\}\times C_1\cup\{2\}\times C_2$ is the support of $\beta_{12}$. Since neither $C_1$ nor $C_2$ equals $\Sigma$, we can find two families of pairwise far apart indexes $(c_1,c_2,c_3)$, $(c_4,c_5,c_6)$ such that $c_1\notin C_1$ and $c_6\notin C_2$. Then the entries $(1,c_1|2,c_2|2,c_3)$, $(1,c_1|2,c_2|5,c_3)$, $(1,c_4|1,c_5|2,c_6)$, and their permutations are zero in $\N$.
\end{proof}

Now we apply Lemma~\ref{lemccl} with $E$ equal to the restriction of the tensor $\Xi_1+\Xi_2+\Xi_3$ as in~\eqref{dec00} to $\I$, and we are going to discuss the corresponding tensor $F$. For all $i,j,k\in\{1,2,5\}$, the value $F(i|j|k)$ is equal to the $(i,c_1|j,c_2|k,c_3)$ entry of $\S-\Phi-\mathcal{N}$. According to Lemma~\ref{lem00}, and since $\S-\Phi$ is a symmetric tensor, we can choose $c_1,c_2,c_3$ such that $E(\pi)$ is the same for any permutation $\pi$ of $(i,c_1|j,c_2|k,c_3)$. Therefore, $F(1,2,5|1,2,5|1,2,5)$ is a symmetric tensor as well.

Finally, we note that Lemma~\ref{lem00} can be proved with $\{1,3,5\}$ and $\{2,3,5\}$ instead of $\{1,2,5\}$. So we see that $F(1,3,5|1,3,5|1,3,5)$ and $F(2,3,5|2,3,5|2,3,5)$ are also symmetric tensors, which gives a contradiction with Lemma~\ref{thrlowereua}.

\section{Further work}

Several problems related to Conjecture~\ref{concomon} remain open. In particular, we can define the symmetric rank of a tensor $T$ with respect to any field $\mathbb{F}$ as the smallest integer $r$ for which $T$ is an $\mathbb{F}$-linear combination of $r$ symmetric simple tensors with coefficients in $\mathbb{F}$.

\begin{pr}\label{prdf}
Does there exist a field over which the rank and symmetric rank of any symmetric tensor agree? 
\end{pr}

Problem~\ref{prdf} has been considered over $\mathbb{R}$ in~\cite{Fried} but remains open. Joint efforts of the author and Mateusz Micha\l{}ek did not allow to construct a real decomposition of $\S$ of rank $903$, which suggests that the counterexample provided in this paper cannot disprove the real version of Comon's conjecture. However, the technique presented in this paper allows the author to expect a negative answer to Problem~\ref{prdf}.

Another interesting question is the partially symmetric version of Comon's conjecture. A $3$-dimensional tensor is called \textit{partially symmetric} if its $3$-slices are symmetric, and the notion of \textit{partially symmetric rank} arises in the same way as rank but simple tensors in decompositions are required to be partially symmetric.

\begin{pr}\label{prps} (See~\cite{BL}.)
Does there exist a partially symmetric tensor with different rank and partially symmetric rank? 
\end{pr}

This statement seems to be closer than Comon's conjecture to the theorem on the equality of rank and symmetric rank of matrices, and it may be harder to disprove. The author does not know if the present technique is applicable to Problem~\ref{prps}, but he thinks he managed to construct quite a complicated counterexample showing that the solution of Problem~\ref{prps} is negative. The author hopes to revisit the real and partially symmetric versions of Comon's conjecture in future work.

Finally, let us mention the analogue of Comon's conjecture for \textit{border ranks} of tensors, see~\cite{BGL}. Recall that the \textit{(symmetric) border rank} of a real or complex tensor $T$ is the smallest $r$ such that $T$ is the limit of a sequence of (symmetric) tensors with (symmetric) rank at most $r$.

\begin{pr}(See~\cite{BGL}.)\label{prbr}
Does there exist a symmetric tensor with different border rank and symmetric border rank?
\end{pr}

The author did not manage to make any progress on Problem~\ref{prbr}. While preparing the first draft of this paper, he was hoping that the border rank analogues of Lemma~\ref{lemcompl5} and Conjecture~\ref{conwar} could be helpful for the study of this problem. The author would like to thank Mateusz Micha\l{}ek and Emanuele Ventura for explaining to him why do the analogues of these statements fail for border rank.

\section{Acknowledgment}

I am grateful to Mateusz Micha\l{}ek for his interest to the content of this paper, for careful reading of the first and second drafts, for explaining to me several details that I did not understand, which include the behavior of the border rank with respect to adjoining slices as in Section~1, and for numerous comments that allowed me to make this paper more readable.


\begin{thebibliography}{99}

\bibitem{BB}
E. Ballico, A.  Bernardi, Tensor ranks on tangent developable of Segre varieties, \textit{Linear Multilinear A.} 61.7 (2013) 881-894.

\bibitem{BGI}
A. Bernardi, A. Gimigliano, M. Ida, Computing symmetric rank for symmetric tensors, \textit{J. Symb. Comput.} 46.1 (2011) 34-53.

\bibitem{BGL}
J. Buczy\'{n}ski, A. Ginensky, J. M. Landsberg, Determinantal equations for secant varieties and the Eisenbud--Koh--Stillman conjecture, \textit{J. London Math. Soc.} 88.1 (2013) 1-24.

\bibitem{BL}
J. Buczy\'{n}ski, J. M. Landsberg, Ranks of tensors and a generalization of secant varieties, \textit{Linear Algebra Appl.} 438.2 (2013) 668-689.



\bibitem{Com1}
P. Comon, Tensors: a brief introduction, \textit{IEEE Signal Proc. Mag.} 31.3 (2014) 44-53.


\bibitem{CGLM}
P. Comon, G. Golub, L-H. Lim, B. Mourrain, Symmetric tensors and symmetric tensor rank, \textit{SIAM J. Matrix Anal. A.} 30.3 (2008) 1254-1279.

\bibitem{DM}
M. C. Dog\v{a}n, J. M. Mendel, Applications of cumulants to array processing. I. Aperture extension and array calibration. \textit{IEEE Trans. Signal Process.} 43 (1995) 1200-1216.

\bibitem{Fried}
S. Friedland, Remarks on the symmetric rank of symmetric tensors, \textit{SIAM J. Matrix Anal.} 37.1 (2016) 320-337.

\bibitem{HL}
C. J. Hillar , L. H. Lim, Most tensor problems are NP-hard, \textit{J. ACM} 60.6 (2013) 45.

\bibitem{HK}
J. E. Hopcroft, L. R. Kerr, On minimizing the number of multiplications necessary for matrix multiplication, \textit{SIAM J. Appl. Math.} 20 (1971) 30--36.

\bibitem{LandBook}
J. M. Landsberg. \textit{Tensors: geometry and applications.} American Mathematical Society. Providence, RI, USA, 2012.

\bibitem{LMat}
Landsberg J. M., M. Micha\l{}ek, Abelian tensors, preprint (2015). To appear in \textit{J. Math. Pure Appl.}

\bibitem{LM}
J. M. Landsberg, M. Micha\l{}ek, On the Geometry of Border Rank Decompositions for Matrix Multiplication and Other Tensors with Symmetry, \textit{SIAM J. Appl. Algebra Geometry} 1.1 (2017) 2-19.

\bibitem{LT}
J. M. Landsberg, Z. Teitler, On the ranks and border ranks of symmetric tensors, \textit{Found. Comput. Math.} 10.3 (2010) 339-366.


\bibitem{ZHQ}
X. Zhang, Z. H. Huang, L. Qi,  Comon's Conjecture, Rank Decomposition, and Symmetric Rank Decomposition of Symmetric Tensors, \textit{SIAM J. Matrix Anal. A.} 37.4 (2016) 1719-1728.

\bibitem{filecheck}
\url{http://bit.ly/2rThaSv}

\end{thebibliography}
\end{document}